\newtheorem{theorem}{Theorem}[section]
\newtheorem{lemma}[theorem]{Lemma}
\newtheorem{proposition}[theorem]{Proposition}
\newtheorem{corollary}[theorem]{Corollary}
\theoremstyle{definition}
\newtheorem{definition}[theorem]{Definition}
\numberwithin{equation}{section}
\theoremstyle{remark}
\newcommand{\abs}[1]{\lvert#1\rvert}
\providecommand{\abs}[1]{ \lvert#1  \rvert}
\providecommand{\norm}[1]{ \lVert#1  \rVert}
\newcommand{\dmu}{\, d \mu}
\newcommand{\dx}{\, d x}
\newcommand{\dt}{\, d t}
\newcommand{\dla}{\, d \lambda}
\def\Xint#1{\mathchoice
   {\XXint\displaystyle\textstyle{#1}}%
   {\XXint\textstyle\scriptstyle{#1}}%
   {\XXint\scriptstyle\scriptscriptstyle{#1}}%
   {\XXint\scriptscriptstyle\scriptscriptstyle{#1}}%
   \!\int}
\def\XXint#1#2#3{{\setbox0=\hbox{$#1{#2#3}{\int}$}
     \vcenter{\hbox{$#2#3$}}\kern-.5\wd0}}
\def\dashint{\Xint-}
\DeclareMathOperator*{\essinf}{ess\,inf}
\newcommand{\citecomment}[2][]{\citen{#2}#1\citevar}
\newcommand{\citeone}[1]{\citecomment{#1}}
\newcommand{\citetwo}[2][]{\citecomment[,~#1]{#2}}
\newcommand{\citevar}{\@ifnextchar\bgroup{;~\citeone}{\@ifnextchar[{;~\citetwo}{]}}}
\newcommand{\citefirst}{\@ifnextchar\bgroup{\citeone}{\@ifnextchar[{\citetwo}{]}}}
\begin{document}

\title{Characterizations of parabolic Muckenhoupt classes}

\author{Juha Kinnunen}
\address{Department of Mathematics, Aalto University, P.O. Box 11100, FI-00076 Aalto, Finland}
\email{juha.k.kinnunen@aalto.fi}

\author{Kim Myyryl\"ainen}
\address{Department of Mathematics, Aalto University, P.O. Box 11100, FI-00076 Aalto, Finland}
\email{kim.myyrylainen@aalto.fi}
\thanks{The second author was supported by the Magnus Ehrnrooth Foundation.}

\subjclass[2020]{42B35, 42B37}

\keywords{Parabolic Muckenhoupt weights, parabolic maximal functions, one-sided weights, doubly nonlinear equation}

\begin{abstract} This paper extends and complements the existing theory for the parabolic Muckenhoupt weights motivated by one-sided maximal functions and a doubly nonlinear parabolic partial differential equation of $p$-Laplace type. The main results include characterizations for the limiting parabolic $A_\infty$ and $A_1$ classes by applying an uncentered parabolic maximal function with a time lag. Several parabolic Calder\'on--Zygmund decompositions, covering and chaining arguments appear in the proofs.
\end{abstract}

\maketitle

\section{Introduction}

This paper discusses parabolic Muckenhoupt weights 
\begin{equation*}
\sup_{R \subset \mathbb R^{n+1}}\biggl( \dashint_{R^-(\gamma)} w \biggr) \biggl( \dashint_{R^+(\gamma)} w^{\frac{1}{1-q}} \biggr)^{q-1} < \infty,
\quad 1<q<\infty,
\end{equation*}
where $R^{\pm}(\gamma)$ are space-time rectangles with a time lag $\gamma\ge 0$, see Definition~\ref{def_parrect}.
This class of weights was introduced by Kinnunen and Saari in~\cite{kinnunenSaariMuckenhoupt,kinnunenSaariParabolicWeighted}.
The main results in~\cite{kinnunenSaariMuckenhoupt,kinnunenSaariParabolicWeighted} are characterizations of weighted norm inequalities for the centered forward in time parabolic maximal functions,
self-improving phenomena related to parabolic reverse H\"older inequalities, factorization results and a Coifman--Rochberg type characterization of parabolic $BMO$. 
This paper complements and extends these results. 
Instead of the centered parabolic maximal function in~\cite{kinnunenSaariMuckenhoupt,kinnunenSaariParabolicWeighted}, the corresponding uncentered maximal function gives a more streamlined theory, see Section~\ref{sec:parmaxfct}. 
Observe that the centered and uncentered maximal functions are not comparable in the parabolic case.

There are many characterizations for the standard Muckenhoupt $A_\infty$ condition, but little is known in the  parabolic case.
This paper is an attempt to create the missing theory and several interesting open questions remain.
Theorem \ref{thm:Ainfty} gives new characterizations for the parabolic Muckenhoupt $A_\infty$ class in terms of quantitative absolute continuity with a time lag.
Section~\ref{sec:RHI} gives a new proof for the parabolic reverse H\"older inequality
\cite[Theorem~5.2]{kinnunenSaariParabolicWeighted}.
A complete theory for the parabolic Muckenhoupt $A_1$ class, including factorization and characterization results for the full range of the time lag, is obtained in Section~\ref{sec:A1theory}.
Theorem \ref{translatedRHI} complements~\cite[Proposition 3.4 (iv) and (vii)]{kinnunenSaariParabolicWeighted} and shows that the results are independent of the time lag and the distance between the upper and lower parts of the rectangles.
Several parabolic Calder\'on--Zygmund decompositions, covering and chaining arguments appear in the proofs.

There are two main motivations for the theory of parabolic Muckenhoupt weights.
On the one hand, it is a higher dimensional version of the one-sided Muckenhoupt condition 
\[
\sup_{x \in \mathbb{R}, L > 0} \frac{1}{L} \int_{x-L}^{x} w \biggl( \frac{1}{L} \int_{x}^{x+L} w^{\frac{1}{1-q}} \biggr)^{q-1} < \infty,
\]
with $\gamma=0$,
introduced by Sawyer~\cite{sawyer1986}. 
The one-dimensional theory is well-understood, see Aimar, Forzani and Mart\'in-Reyes~\cite{AFMR1997}, 
Cruz-Uribe, Neugebauer and  Olesen~\cite{CUNO1995}, Mart\'\i n-Reyes~\cite{Martin1993}, Mart\'\i n-Reyes, Ortega Salvador and de la Torre~\cite{MOST1990},  Mart\'\i n-Reyes, Pick and de la Torre~\cite{MRPT1993},  Mart\'\i n-Reyes and de la Torre~\cite{MRT1992,MRT1994}. 
In addition to~\cite{kinnunenSaariMuckenhoupt,kinnunenSaariParabolicWeighted}, several alternative higher dimensional versions have been studied by Berkovits~\cite{berkovits2011}, Forzani, Mart\'{\i}n-Reyes and Ombrosi~\cite{ForzaniMartinreyesOmbrosi2011}, Lerner and Ombrosi~\cite{LO2010} and Ombrosi~\cite{Ombrosi2005}.

On the other hand, parabolic Muckenhoupt weights are related to the doubly nonlinear parabolic equation
\begin{equation}\label{eq.dnle}
\frac{\partial}{\partial t}(\lvert u\rvert^{p-2}u)-\operatorname{div} A(x,t,u,Du)=0,
\end{equation}
where $A$ is a Carath\'eodory function that satisfies the structural conditions 
\[
A(x,t,u,Du) \cdot Du \ge C_0\lvert Du\rvert^p
\quad\text{and}\quad
\lvert A(x,t,u,Du)\rvert \le C_1\lvert Du\rvert^{p-1}
\]
for some positive constants $C_0$ and $C_1$ with $1<p<\infty$. 
In particular, this class of partial differential equations includes the doubly nonlinear $p$-Laplace equation with $A(x,t,u,Du)=|Du|^{p-2}Du$.
In the natural geometry of \eqref{eq.dnle}, we consider space-time rectangles 
where the time variable scales to the power $p$.
Observe that solutions can be scaled, but constants cannot be added. 
If $u(x,t)$ is a solution, so does $u(\lambda x,\lambda^p t)$ with $\lambda>0$.
The main challenge of \eqref{eq.dnle} is the double nonlinearity both in time and space variables.
Trudinger~\cite{Trudinger1968} showed that a scale and location invariant parabolic Harnack's inequality holds true for nonnegative weak solutions to \eqref{eq.dnle} in parabolic rectangles, see also Gianazza and Vespri~\cite{GV2006}, Kinnunen and Kuusi~\cite{kinnunenkuusi} and Vespri~\cite{V1992}. This implies that nonnegative solutions to \eqref{eq.dnle} are parabolic Muckenhoupt weights with $\gamma>0$.
We note that Harnack's inequality is not true with $\gamma=0$ which can be seen from the heat kernel already when $p=2$.
For recent regularity results for the doubly nonlinear equation, we refer to B\"{o}gelein, Duzaar, Kinnunen and Scheven~\cite{bogelein2021b}, 
B\"{o}gelein, Duzaar and Scheven~\cite{bogelein2022},
B\"{o}gelein, Duzaar and Liao~\cite{bogelein2021a},
B\"{o}gelein, Heran, Sch\"{a}tzler and Singer~\cite{bogelein2021c} and Saari~\cite{localtoglobal}.

\section{Definition and properties of parabolic Muckenhoupt weights}

The underlying space throughout is $\mathbb{R}^{n+1}=\{(x,t):x=(x_1,\dots,x_n)\in\mathbb R^n,t\in\mathbb R\}$.
Unless otherwise stated, constants are positive and the dependencies on parameters are indicated in the brackets.
The Lebesgue measure of a measurable subset $A$ of $\mathbb{R}^{n+1}$ is denoted by $\lvert A\rvert$.
A cube $Q$ is a bounded interval in $\mathbb R^n$, with sides parallel to the coordinate axes and equally long, that is,
$Q=Q(x,L)=\{y \in \mathbb R^n: \lvert y_i-x_i\rvert \leq L,\,i=1,\dots,n\}$
with $x\in\mathbb R^n$ and $L>0$. 
The point $x$ is the center of the cube and $L$ is the side length of the cube. 
Instead of Euclidean cubes, we work with the following collection of parabolic rectangles in $\mathbb{R}^{n+1}$.

\begin{definition}\label{def_parrect}
Let $1<p<\infty$, $x\in\mathbb R^n$, $L>0$ and $t \in \mathbb{R}$.
A parabolic rectangle centered at $(x,t)$ with side length $L$ is
\[
R = R(x,t,L) = Q(x,L) \times (t-L^p, t+L^p)
\]
and its upper and lower parts are
\[
R^+(\gamma) = Q(x,L) \times (t+\gamma L^p, t+L^p) 
\quad\text{and}\quad
R^-(\gamma) = Q(x,L) \times (t - L^p, t - \gamma L^p) ,
\]
where $0\leq \gamma < 1$ is the time lag.
\end{definition}

Note that $R^-(\gamma)$ is the reflection of $R^+(\gamma)$ with respect to the time slice $\mathbb{R}^n \times \{t\}$.
The spatial side length of a parabolic rectangle $R$ is denoted by $l_x(R)=L$ and the time length by $l_t(R)=2L^p$.
For short, we write $R^\pm$ for $R^{\pm}(0)$.
The top of a rectangle $R = R(x,t,L)$ is $Q(x,L) \times\{t+L^p\}$
and the bottom is $Q(x,L) \times\{t-L^p\}$.
The $\lambda$-dilate of $R$  with $\lambda>0$ is denoted by $\lambda R = R(x,t,\lambda L)$.

The integral average of $f \in L^1(A)$ in measurable set $A\subset\mathbb{R}^{n+1}$, with $0<|A|<\infty$, is denoted by
\[
f_A = \dashint_A f \dx \dt = \frac{1}{\lvert A\rvert} \int_A f(x,t)\dx\dt .
\]

This section discusses basic properties of parabolic Muckenhoupt weights. 
We begin with the definition of the uncentered parabolic maximal functions.
The differentials $\dx \dt$ in integrals are omitted in the sequel.

\begin{definition}
Let $0\leq\gamma<1$ and $f$ be a locally integrable function. 
The uncentered forward in time and backward in time parabolic maximal functions are defined by
\[
M^{\gamma+}f(x,t) = \sup_{R^-(\gamma)\ni(x,t)} \dashint_{R^+(\gamma)} \lvert f \rvert
\]
and
\[
M^{\gamma-}f(x,t) = \sup_{R^+(\gamma)\ni(x,t)} \dashint_{R^-(\gamma)} \lvert f \rvert.
\]
\end{definition}

A locally integrable nonnegative function $w$ is called a weight.
We give definitions for parabolic Muckenhoupt classes $A^+_q$ and $A^+_1$.

\begin{definition}
Let $1<q<\infty$ and $0\leq\gamma<1$.
A weight $w$ belongs to the parabolic Muckenhoupt class $A^+_q(\gamma)$ if
\[
[w]_{A^+_q(\gamma)} = 
\sup_{R \subset \mathbb R^{n+1}}
\biggl( \dashint_{R^-(\gamma)} w \biggr) \biggl( \dashint_{R^+(\gamma)} w^{\frac{1}{1-q}} \biggr)^{q-1} < \infty ,
\]
where the supremum is taken over all parabolic rectangles $R \subset \mathbb R^{n+1}$.
If the condition above holds with the time axis reversed, then $w \in A^-_q(\gamma)$.
\end{definition}

\begin{definition}
Let $0\leq\gamma<1$.
A weight $w$ belongs to the parabolic Muckenhoupt class $A^+_1(\gamma)$ if
there exists a constant $C=[w]_{A^+_1(\gamma)}>0$ such that
\[
\dashint_{R^-(\gamma)} w \leq C \essinf_{(x,t)\in R^+(\gamma)} w(x,t)
\]
for every parabolic rectangle $R \subset \mathbb R^{n+1}$.
If the condition above holds with the time axis reversed, then $w \in A^-_1(\gamma)$.
\end{definition}

The class $A_1^+(\gamma)$ can be characterized in terms of the parabolic maximal functions.

\begin{proposition}
\label{thm:maximalA1-cond}
Let $0\leq\gamma<1$.
A weight $w$ is in $A_1^+(\gamma)$ if and only if there exists a constant $C$ such that 
\begin{equation}
\label{maximalA_1-cond}
M^{\gamma-}w(x,t) \leq C w(x,t) 
\end{equation}
for almost every $(x,t) \in \mathbb{R}^{n+1}$.
Moreover, we can choose $C = [w]_{A^+_1(\gamma)}$.
The statement also holds for $A_1^-(\gamma)$ with $M^{\gamma+}$.
\end{proposition}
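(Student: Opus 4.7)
The plan is to unravel both implications directly from the definitions, with essentially all nontrivial work concentrated in a measurability/countable-approximation step in the forward direction.

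The direction ``maximal inequality implies $A_1^+(\gamma)$'' is the easy one. Assume $M^{\gamma-}w(x,t) \le Cw(x,t)$ a.e.\ and fix any parabolic rectangle $R$. For every $(x,t) \in R^+(\gamma)$, the rectangle $R$ itself is admissible in the supremum defining $M^{\gamma-}w(x,t)$, so
\[
\dashint_{R^-(\gamma)} w \le M^{\gamma-}w(x,t) \le Cw(x,t)
\]
for a.e.\ $(x,t) \in R^+(\gamma)$. Taking the essential infimum over $R^+(\gamma)$ gives $\dashint_{R^-(\gamma)} w \le C\essinf_{R^+(\gamma)} w$, and since $R$ was arbitrary, $[w]_{A_1^+(\gamma)} \le C$.

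For the converse, set $C=[w]_{A_1^+(\gamma)}$. For each rectangle $R$, the definition of essential infimum yields a null set $N_R \subset R^+(\gamma)$ such that $\dashint_{R^-(\gamma)} w \le Cw(x,t)$ for every $(x,t) \in R^+(\gamma)\setminus N_R$. Since the family of parabolic rectangles is uncountable, one cannot simply unite the $N_R$; instead, I would restrict to the countable subfamily $\mathcal R_0$ of parabolic rectangles with rational centers and rational side lengths and set $N = \bigcup_{R\in\mathcal R_0} N_R$, which is again null. Then for every $(x,t)\notin N$ and every $R\in\mathcal R_0$ with $R^+(\gamma) \ni (x,t)$, one has $\dashint_{R^-(\gamma)} w \le Cw(x,t)$, and taking the supremum over such $R$ yields the maximal inequality with the supremum restricted to $\mathcal R_0$.

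It then remains to verify that this restricted supremum agrees with $M^{\gamma-}w$ pointwise. Given any rectangle $R$ with $(x,t)\in R^+(\gamma)$, one perturbs the parameters of $R$ slightly---enlarging if $(x,t)$ happens to lie on $\partial R^+(\gamma)$---to obtain a sequence $R_j \in \mathcal R_0$ with $(x,t)\in R_j^+(\gamma)$ and $|R_j \triangle R|\to 0$. Local integrability of $w$ and dominated convergence then give $\dashint_{R_j^-(\gamma)} w \to \dashint_{R^-(\gamma)} w$, so the restricted supremum dominates $\dashint_{R^-(\gamma)} w$ for every admissible $R$, and hence equals $M^{\gamma-}w$. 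The main obstacle is precisely this continuity-in-parameters step together with the boundary case $(x,t)\in\partial R^+(\gamma)$; both are handled by the enlargement described above, which is feasible because $\gamma<1$ leaves room to perturb the rectangle parameters without destroying the relation $R_j^+(\gamma)\ni(x,t)$.
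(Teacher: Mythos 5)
Your argument is correct and follows essentially the same route as the paper: both directions are handled identically, and the nontrivial step is the reduction of the uncountable family of parabolic rectangles to the countable subfamily with rational parameters, which in both your argument and the paper's hinges on the same key observation that the time interval of $R^+(\gamma)$ is open, leaving room to perturb $R$ without ejecting the point $(x,t)$ from $R^+(\gamma)$. The only cosmetic differences are that the paper parametrizes rectangles by spatial corners and the bottom time coordinate rather than center and side length, approximates monotonically so that $R^-(\gamma)\subset\widetilde R^-(\gamma)$, and, instead of proving that the restricted supremum coincides pointwise with $M^{\gamma-}w$, directly exhibits the exceptional set as a countable union of null sets $E_i$ indexed by rational rectangles. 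One small imprecision on your side: the convergence $\dashint_{R_j^-(\gamma)}w \to \dashint_{R^-(\gamma)}w$ follows from absolute continuity of the integral (or equivalently from $|R_j^-(\gamma)\triangle R^-(\gamma)|\to 0$ together with local integrability), not really from dominated convergence, but this does not affect the validity of the proof.
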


\begin{proof}
Assume that \eqref{maximalA_1-cond} holds. Then
\[
\dashint_{R^-(\gamma)} w \leq M^{\gamma-} w(x,t) \leq C w(x,t) 
\]
for almost every $(x,t) \in R^+(\gamma)$, and thus by taking the essential infimum over every $(x,t) \in R^+(\gamma)$ we have $w\in A_1^+(\gamma)$.

Then assume that $w\in A_1^+(\gamma)$ with the constant $C=[w]_{A^+_1(\gamma)}$.
Let
\[
E = \{ (x,t)\in \mathbb{R}^{n+1}: M^{\gamma-}w(x,t) > C w(x,t) \}
\]
and $(x,t)\in E$.
There exists a parabolic rectangle $R$ such that $(x,t) \in R^+(\gamma)$ and
\begin{equation}
\label{A1_contra}
\dashint_{R^-(\gamma)} w > C w(x,t).
\end{equation}
For every $\varepsilon>0$ there is a rectangle 
$\widetilde{R}$
whose spatial corners and the bottom time coordinate have rational coordinates
such that $(x,t)\in \widetilde{R}^+(\gamma)$, $R^-(\gamma) \subset \widetilde{R}^-(\gamma)$ and $\lvert \widetilde{R}^-(\gamma) \setminus R^-(\gamma) \rvert <\varepsilon$. 
This is possible since the time interval of $R^+(\gamma)$ is open, and thus there is a positive distance between $t$ and the bottom of $R^+(\gamma)$.
Note that $\vert \widetilde{R}^-(\gamma) \rvert = \lvert R^-(\gamma) \rvert + \lvert \widetilde{R}^-(\gamma) \setminus R^-(\gamma) \rvert < \lvert R^-(\gamma) \rvert + \varepsilon $.
By choosing $\varepsilon>0$ small enough, we have
\[
\dashint_{\widetilde{R}^-(\gamma)} w \geq \frac{1}{\lvert R^-(\gamma) \rvert + \varepsilon} \int_{R^-(\gamma)} w > C w(x,t) .
\]
Hence, we may assume that the spatial corner points and the bottom time coordinate of the parabolic rectangles $R$ satisfying \eqref{A1_contra} are rational.
The $A_1^+(\gamma)$ condition and \eqref{A1_contra} imply that
\[
C w(x,t) < \dashint_{R^-(\gamma)} w \leq C \essinf_{(y,s)\in R^+(\gamma)} w(y,s) .
\]
Since $C>0$, we conclude that
\[
w(x,t) < \essinf_{(y,s)\in R^+(\gamma)} w(y,s) .
\]

Let $\{R_i\}_{i\in\mathbb{N}}$ be an enumeration of parabolic rectangles in $\mathbb{R}^{n+1}$ with rational spatial corners and rational bottom time coordinates. Define
\[
E_i = \Bigl\{(x,t)\in R_i^+(\gamma): w(x,t) < \essinf_{(y,s)\in R_i^+(\gamma)} w(y,s) \Bigr\}
\]
for $i\in\mathbb{N}$.
Then $\lvert E_i \rvert =0$ for every $i\in\mathbb{N}$ and the argument above shows that $E\subset\bigcup_{i\in\mathbb{N}} E_i$.
Thus, we have $\lvert E \rvert =0$ and the claim \eqref{maximalA_1-cond} follows with $C=[w]_{A^+_1(\gamma)}$.
\end{proof}

The next lemma tells that the class of parabolic Muckenhoupt weights is closed under taking maximum and minimum.

\begin{lemma}
\label{lem:trunctation}
Let $1\leq q<\infty$, $0\leq\gamma<1$ and
$w, v\in A_q^+(\gamma)$. Then we have $ \max\{w,v\} \in A_q^+(\gamma)$ and $\min\{w,v\} \in A_q^+(\gamma)$.
\end{lemma}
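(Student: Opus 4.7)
The plan is to reduce everything to pointwise comparisons on $R^-(\gamma)$ and $R^+(\gamma)$ and then estimate the defining product. Throughout, abbreviate $a = \dashint_{R^-(\gamma)} w$, $b = \dashint_{R^-(\gamma)} v$, $A = \dashint_{R^+(\gamma)} w^{\frac{1}{1-q}}$, $B = \dashint_{R^+(\gamma)} v^{\frac{1}{1-q}}$, so that $aA^{q-1} \le [w]_{A_q^+(\gamma)}$ and $bB^{q-1} \le [v]_{A_q^+(\gamma)}$ for every parabolic rectangle $R$.

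For the maximum with $1<q<\infty$, I would use the two pointwise inequalities $\max\{w,v\} \le w+v$ on $R^-(\gamma)$ and, since the map $t\mapsto t^{\frac{1}{1-q}}$ is decreasing on $(0,\infty)$, the identity $\max\{w,v\}^{\frac{1}{1-q}} = \min\{w^{\frac{1}{1-q}}, v^{\frac{1}{1-q}}\}$ on $R^+(\gamma)$. Averaging yields $\dashint_{R^-(\gamma)} \max\{w,v\} \le a+b$ and $\bigl(\dashint_{R^+(\gamma)}\max\{w,v\}^{\frac{1}{1-q}}\bigr)^{q-1} \le \min(A,B)^{q-1}$. Multiplying and splitting gives $(a+b)\min(A,B)^{q-1} \le aA^{q-1}+bB^{q-1} \le [w]_{A_q^+(\gamma)}+[v]_{A_q^+(\gamma)}$. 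Taking the supremum over $R$ yields $[\max\{w,v\}]_{A_q^+(\gamma)} \le [w]_{A_q^+(\gamma)}+[v]_{A_q^+(\gamma)}$.

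For the minimum with $1<q<\infty$, the same monotonicity swaps roles: $\min\{w,v\}^{\frac{1}{1-q}} = \max\{w^{\frac{1}{1-q}}, v^{\frac{1}{1-q}}\} \le w^{\frac{1}{1-q}}+v^{\frac{1}{1-q}}$, while $\min\{w,v\}\le w$ and $\le v$ pointwise. Hence $\dashint_{R^-(\gamma)}\min\{w,v\} \le \min(a,b)$ and $\bigl(\dashint_{R^+(\gamma)}\min\{w,v\}^{\frac{1}{1-q}}\bigr)^{q-1} \le (A+B)^{q-1} \le 2^{q-1}\max(A,B)^{q-1}$. Assuming without loss of generality $A\ge B$, we obtain $\min(a,b)\max(A,B)^{q-1} \le aA^{q-1} \le [w]_{A_q^+(\gamma)}$, and the symmetric case gives $[v]_{A_q^+(\gamma)}$, so $[\min\{w,v\}]_{A_q^+(\gamma)} \le 2^{q-1}\max\bigl\{[w]_{A_q^+(\gamma)},[v]_{A_q^+(\gamma)}\bigr\}$.

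For the endpoint $q=1$, I would argue directly from the $A_1^+(\gamma)$ definition, using that $\essinf_{R^+(\gamma)}\max\{w,v\} \ge \max\{\essinf w, \essinf v\}$ while $\essinf_{R^+(\gamma)}\min\{w,v\} = \min\{\essinf w, \essinf v\}$. The maximum case follows from $\dashint_{R^-(\gamma)}\max\{w,v\} \le \dashint w + \dashint v \le ([w]_{A_1^+}+[v]_{A_1^+})\essinf_{R^+(\gamma)}\max\{w,v\}$, and the minimum case from $\dashint_{R^-(\gamma)}\min\{w,v\} \le \min\bigl(\dashint w,\dashint v\bigr) \le \max\bigl\{[w]_{A_1^+},[v]_{A_1^+}\bigr\}\min\{\essinf w,\essinf v\}$. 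There is no real obstacle; the only subtlety is remembering that negative exponents reverse $\min$ and $\max$, which is precisely what makes both operations preserve the class.
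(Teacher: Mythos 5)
Your proposal is correct and takes essentially the same approach as the paper: both rest on the observation that the negative exponent $\frac{1}{1-q}$ reverses $\min$ and $\max$, then split across the two weights and apply each weight's $A_q^+(\gamma)$ constant. The only presentational differences are that you use the pointwise bound $\max\{w,v\}\leq w+v$ where the paper partitions $R^-(\gamma)$ into $\{w>v\}$ and $\{w\leq v\}$, and you spell out the minimum and $q=1$ cases which the paper treats as ``similar.''
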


\begin{proof}

Let $u = \max\{w,v\}$.
For $1<q<\infty$,
we have
\begin{align*}
& \biggl( \dashint_{R^-(\gamma)} u \biggr) \biggl(
\dashint_{R^+(\gamma)} u^{\frac{1}{1-q}} \biggr)^{q-1} 
=
\biggl(  \frac{1}{\lvert R^-(\gamma) \rvert} \int_{R^-(\gamma) \cap \{w>v\}} u \biggr) \biggl( \dashint_{R^+(\gamma)} u^{\frac{1}{1-q}} \biggr)^{q-1} \\
&\qquad\qquad +
\biggl(  \frac{1}{\lvert R^-(\gamma) \rvert} \int_{R^-(\gamma) \cap \{w \leq v\}} u \biggr) \biggl( \dashint_{R^+(\gamma)} u^{\frac{1}{1-q}} \biggr)^{q-1}
\\
&\qquad\leq
\biggl( \dashint_{R^-(\gamma)} w \biggr) \biggl( \dashint_{R^+(\gamma)} u^{\frac{1}{1-q}} \biggr)^{q-1}
+
\biggl( \dashint_{R^-(\gamma)} v \biggr) \biggl( \dashint_{R^+(\gamma)} u^{\frac{1}{1-q}} \biggr)^{q-1} 
\\
&\qquad\leq
\biggl( \dashint_{R^-(\gamma)} w \biggr) \biggl( \dashint_{R^+(\gamma)} w^{\frac{1}{1-q}} \biggr)^{q-1}
+
\biggl( \dashint_{R^-(\gamma)} v \biggr) \biggl( \dashint_{R^+(\gamma)} v^{\frac{1}{1-q}} \biggr)^{q-1}
\\
&\qquad\leq
[w]_{A^+_q(\gamma)} + [v]_{A^+_q(\gamma)} .
\end{align*}
On the other hand, for $q=1$ it holds that
\begin{align*}
\dashint_{R^-(\gamma)} u &\leq
\frac{1}{\lvert R^-(\gamma) \rvert} \int_{R^-(\gamma) \cap \{w>v\}} u + \frac{1}{\lvert R^-(\gamma) \rvert} \int_{R^-(\gamma) \cap \{w\leq v\}} u\\
&\leq\dashint_{R^-(\gamma)} w + \dashint_{R^-(\gamma)} v \\
&\leq
[w]_{A^+_q(\gamma)} \essinf_{(x,t)\in R^+(\gamma)} w(x,t) + [v]_{A^+_q(\gamma)} \essinf_{(x,t)\in R^+(\gamma)} v(x,t) \\
&\leq 
\bigl( [w]_{A^+_q(\gamma)} + [v]_{A^+_q(\gamma)} \bigr) \essinf_{(x,t)\in R^+(\gamma)} u(x,t) .
\end{align*}
By taking supremum over all parabolic rectangles $R\subset\mathbb{R}^{n+1}$, we obtain the first claim.
The corresponding claim for the minimum follows similarly.
\end{proof}

Next we discuss a duality property of the parabolic Muckenhoupt weights.
Here $q'=\frac{q}{q-1}$ denotes the conjugate exponent of $q$.

\begin{lemma}
\label{lem:dualityAp}
Let $1<q<\infty$, $0\leq\gamma<1$ and $w$ be a weight. Then $w\in A^+_q(\gamma)$ if and only if $w^{1-q'} \in A^-_{q'}(\gamma)$.
\end{lemma}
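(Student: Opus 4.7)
The plan is a direct algebraic manipulation exploiting the involutive nature of the map $w \mapsto w^{1-q'}$, combined with the fact that reversing the time axis swaps the roles of $R^+(\gamma)$ and $R^-(\gamma)$. These two symmetries compensate each other exactly.

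First I would compute the relevant exponents: since $q' = q/(q-1)$, one has $1 - q' = 1/(1-q)$ and $q' - 1 = 1/(q-1)$. Setting $u = w^{1-q'}$, the identity $(1-q')^2 \cdot \text{stuff}$ is unnecessary; what matters is simply that $u^{1/(1-q')} = w^{(1-q')/(1-q')} = w$. So the duality is involutive.

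Next, I would unpack the $A^-_{q'}(\gamma)$ condition for the weight $u = w^{1-q'}$. Because reversing the time axis swaps $R^+(\gamma)$ with $R^-(\gamma)$, this condition reads
\[
[w^{1-q'}]_{A^-_{q'}(\gamma)} = \sup_{R} \biggl( \dashint_{R^+(\gamma)} w^{1-q'} \biggr) \biggl( \dashint_{R^-(\gamma)} (w^{1-q'})^{\frac{1}{1-q'}} \biggr)^{q'-1} = \sup_{R} \biggl( \dashint_{R^+(\gamma)} w^{\frac{1}{1-q}} \biggr) \biggl( \dashint_{R^-(\gamma)} w \biggr)^{\frac{1}{q-1}}.
\]
Raising both sides to the power $q-1 > 0$ and comparing with the definition of $[w]_{A^+_q(\gamma)}$ gives
\[
\bigl( [w^{1-q'}]_{A^-_{q'}(\gamma)} \bigr)^{q-1} = [w]_{A^+_q(\gamma)},
\]
so one constant is finite if and only if the other is, which proves both implications at once.

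There is essentially no obstacle here beyond bookkeeping; the only thing to be careful about is correctly identifying that the $A^-$ definition places $R^+(\gamma)$ where $A^+$ has $R^-(\gamma)$ and vice versa, and that the two exponents $1-q'$ and $q'-1$ conspire with the exponent $q-1$ in the $A^+_q$ definition so that the whole computation matches term by term.
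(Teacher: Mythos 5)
Your proof is correct and follows the same algebraic route as the paper: unpack the $A^-_{q'}(\gamma)$ condition for $w^{1-q'}$, use $1-q' = 1/(1-q)$ and $(w^{1-q'})^{1/(1-q')} = w$, and observe that the resulting expression is $[w]_{A^+_q(\gamma)}^{1/(q-1)}$. You merely phrase the two directions as a single equality of constants $[w^{1-q'}]_{A^-_{q'}(\gamma)}^{q-1} = [w]_{A^+_q(\gamma)}$, where the paper states the two inequalities separately, but the computation is identical.
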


\begin{proof}
Assume that $w\in A^+_q(\gamma)$. Then we have
\begin{align*}
&\biggl( \dashint_{R^+(\gamma)} w^{1-q'} \biggr) \biggl( \dashint_{R^-(\gamma)} (w^{1-q'})^{\frac{1}{1-q'}} \biggr)^{q'-1}\\
&\qquad= \biggl( \dashint_{R^-(\gamma)} w \biggr)^\frac{1}{q-1}
\biggl( \dashint_{R^+(\gamma)} w^{\frac{1}{1-q}} \biggr)
\leq
[w]_{A^+_q(\gamma)}^\frac{1}{q-1}
\end{align*}
for every parabolic rectangle $R\subset\mathbb{R}^{n+1}$.
Thus, we have $w^{1-q'} \in A^-_{q'}(\gamma)$.

For the reverse direction, assume that $w^{1-q'} \in A^-_{q'}(\gamma)$.
It holds that
\begin{align*}
&\biggl( \dashint_{R^-(\gamma)} w \biggr) \biggl( \dashint_{R^+(\gamma)} w^{\frac{1}{1-q}} \biggr)^{q-1}\\
&\qquad=
\biggl( \dashint_{R^+(\gamma)} w^{1-q'} \biggr)^{\frac{1}{q'-1}} \biggl( \dashint_{R^-(\gamma)} (w^{1-q'})^{\frac{1}{1-q'}} \biggr) \leq
[w^{1-q'}]_{A^+_{q'}(\gamma)}^\frac{1}{q'-1}
\end{align*}
for every parabolic rectangle $R\subset\mathbb{R}^{n+1}$.
Thus, we have $w\in A^+_q(\gamma)$.
\end{proof}

\section{Time lag and distance between upper and lower parts}

The following theorem asserts that we can change the time lag in the upper and lower parts of parabolic rectangles and also the distance between them.
In particular, the definition of $A_q^+(\gamma)$, $1\leq q<\infty$, does not depend on $0<\gamma<1$.

\begin{theorem}
\label{thm:timelagchange}
Let $0<\gamma\leq\alpha<1$, $q>1$ and $\tau \geq 1$.
Then $w$ belongs to $A^+_q(\gamma)$ if and only if there exists a constant $C=C(n,p,q,\gamma,\alpha,\tau)$ such that
\begin{equation}
\label{translatedRHI}
\biggl( \dashint_{S^-(\alpha)} w \biggr) \biggl( \dashint_{R^+(\alpha)} w^{\frac{1}{1-q}} \biggr)^{q-1} \leq C
\end{equation}
for every parabolic rectangle $R = R(x,t,L) \subset \mathbb R^{n+1}$, where $S^-(\alpha) = R^+(\alpha) - (0, \tau (1+\alpha) L^p)$ is called the translated lower part of $R$.
Moreover, $w$ belongs to $A^+_1(\gamma)$ if and only if there exists a constant $C$ such that
\[
\dashint_{S^-(\alpha)} w \leq C \essinf_{(x,t)\in R^+(\alpha)} w(x,t)
\]
for every parabolic rectangle $R = R(x,t,L) \subset \mathbb R^{n+1}$.
\end{theorem}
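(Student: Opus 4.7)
The plan is to prove the two directions separately. For the reverse direction, given an arbitrary test rectangle $R_0 = R(x_0, t_0, L_0)$ to be tested against $A^+_q(\gamma)$, I would choose an enlarged rectangle $R = R(x_0, t_\ast, L_\ast)$ with $L_\ast = c L_0$, where $c = c(\gamma, \alpha, \tau)$ satisfies $c^p \geq (1-\gamma)/(1-\alpha)$, and with $t_\ast$ lying in the nonempty affine window for which simultaneously $R^+(\alpha) \supset R_0^+(\gamma)$ and $S^-(\alpha) \supset R_0^-(\gamma)$. The existence of such $t_\ast$ reduces to checking that two linear inequalities in $t_\ast - t_0$ are compatible, which is the origin of the condition on $c$. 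Applying \eqref{translatedRHI} to $R$ and absorbing the bounded measure ratios $\lvert R^+(\alpha) \rvert / \lvert R_0^+(\gamma) \rvert$ and $\lvert S^-(\alpha) \rvert / \lvert R_0^-(\gamma) \rvert$ into the constant yields the $A^+_q(\gamma)$ inequality on $R_0$; the $q = 1$ case works the same way with $\essinf$ in place of the $w^{\frac{1}{1-q}}$ factor.

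For the forward direction I plan to use a chaining argument. Fix $R = R(x,t,L)$ and subdivide the spatial cube $Q(x, L)$ into $N^n$ congruent subcubes $\{Q_i\}$ of side length $L/N$, where $N = N(\gamma,\alpha,\tau,p)$ is chosen so that the time gap of order $\tau(1+\alpha)L^p$ between $S^-(\alpha)$ and $R^+(\alpha)$ can be spanned by an absolutely bounded number $K = K(\gamma,\alpha,\tau,p)$ of links of spatial side $L/N$ and time-center spacing $(1+\gamma)(L/N)^p$. Above each $Q_i$ I form a vertical chain $\widetilde R_{0,i}, \widetilde R_{1,i}, \dots, \widetilde R_{K,i}$ with matching junctions $\widetilde R_{k,i}^+(\gamma) = \widetilde R_{k+1,i}^-(\gamma)$, arranged so that $\widetilde R_{0,i}^-(\gamma) \subset S^-(\alpha)$ and $\widetilde R_{K,i}^+(\gamma) \subset R^+(\alpha)$. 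Applying $A^+_q(\gamma)$ link by link, combined with Jensen's inequality at the shared intermediate faces to pass between $w$-averages and $w^{\frac{1}{1-q}}$-averages, telescopes into
\[
\biggl( \dashint_{\widetilde R_{0,i}^-(\gamma)} w \biggr) \biggl( \dashint_{\widetilde R_{K,i}^+(\gamma)} w^{\frac{1}{1-q}} \biggr)^{q-1} \leq [w]_{A^+_q(\gamma)}^{K+1}.
\]
Summing these per-column estimates over $i$ with the aid of H\"older's inequality, so as to reassemble the averages on the full sets $S^-(\alpha)$ and $R^+(\alpha)$, produces \eqref{translatedRHI} with a constant of the required form.

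The main obstacle will be executing this reassembly cleanly: the column estimates control products of averages on the auxiliary pairs $(\widetilde R_{0,i}^-(\gamma),\widetilde R_{K,i}^+(\gamma))$, whereas \eqref{translatedRHI} demands the analogous bound on the single pair $(S^-(\alpha), R^+(\alpha))$, so the transition requires either a direct H\"older sum over $i$ or an appeal to Lemma~\ref{lem:dualityAp} in order to treat the two legs symmetrically. The hypothesis $\gamma > 0$ is essential to the matching condition $\widetilde R_{k,i}^+(\gamma) = \widetilde R_{k+1,i}^-(\gamma)$ at the junctions, and the whole argument degenerates as $\gamma \to 0$. The case $q = 1$ is cleaner: iterating $\dashint_{\widetilde R_{k,i}^-(\gamma)} w \leq [w]_{A^+_1(\gamma)} \essinf_{\widetilde R_{k,i}^+(\gamma)} w$ along each column, together with monotonicity of $\essinf$ under inclusions, compares $\dashint_{S^-(\alpha)} w$ directly to $\essinf_{R^+(\alpha)} w$ without any H\"older bookkeeping.
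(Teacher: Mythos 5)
Your plan has the two directions of the equivalence swapped in difficulty, and the direction you treat as easy contains a fatal geometric gap.

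For the direction ``\eqref{translatedRHI} implies $A_q^+(\gamma)$'' (your ``reverse'' direction), the single-enlargement idea does not work. Suppose you want $R^+(\alpha)\supset R_0^+(\gamma)$ and $S^-(\alpha)\supset R_0^-(\gamma)$ with $R$ of side length $L_\ast$. Write $s = t_\ast - t_0$. The temporal inclusions amount to four inequalities: $s\le \gamma L_0^p-\alpha L_\ast^p$, $s\ge L_0^p-L_\ast^p$, $s\le -L_0^p+(\tau(1+\alpha)-\alpha)L_\ast^p$, and $s\ge -\gamma L_0^p+(\tau(1+\alpha)-1)L_\ast^p$. You checked the compatibility of the second with the first, which gives $L_\ast^p\ge\frac{1-\gamma}{1-\alpha}L_0^p$. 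But the fourth must also be at most the first, which forces $(\tau(1+\alpha)-(1-\alpha))L_\ast^p\le 2\gamma L_0^p$; already for $\tau=1$ this reads $L_\ast^p\le\frac{\gamma}{\alpha}L_0^p$, and the two constraints are simultaneously satisfiable only when $\alpha=\gamma$ (and $\tau=1$). Geometrically: the temporal gap between $S^-(\alpha)$ and $R^+(\alpha)$ scales like $L_\ast^p$ and is at least $2\alpha L_\ast^p$, whereas the gap available inside $R_0$ is only $2\gamma L_0^p$; enlarging $L_\ast$ to cover the upper and lower parts of $R_0$ only widens the gap further. This is precisely why the paper's proof of this implication is the hard one and requires a long chaining argument: the upper set $R_0^+(\gamma)$ is partitioned into many small subrectangles, each is connected to a common central rectangle by a chain of translated pairs with controlled overlap ratios, \eqref{translatedRHI} is iterated along each chain, and the estimates are recombined using the fact that all chains terminate at the same central rectangle. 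There is no shortcut through a single test rectangle.

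For the direction ``$A_q^+(\gamma)$ implies \eqref{translatedRHI}'' (your ``forward'' direction), you set up a columnwise chain, but this is more machinery than is needed and the reassembly you flag as the ``main obstacle'' is genuinely problematic: your per-column bounds control products over pairs $(\widetilde R_{0,i}^-(\gamma),\widetilde R_{K,i}^+(\gamma))$, and a sum of such products does not directly bound the product of the two full averages because the second factor involves a negative power. The paper avoids this entirely. Since $\gamma\le\alpha$ gives $R^\pm(\alpha)\subset R^\pm(\gamma)$, the $\tau=1$ case of \eqref{translatedRHI} follows immediately from the $A_q^+(\gamma)$ condition with the constant $((1-\gamma)/(1-\alpha))^q[w]_{A_q^+(\gamma)}$, and then H\"older gives $\dashint_{R^-(\alpha)}w\le C_1\dashint_{R^+(\alpha)}w$, which is iterated $\lfloor\tau\rfloor$ times (plus a bounded-overlap partition for fractional $\tau$). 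No chaining or reassembly of the $w^{1/(1-q)}$ averages is necessary in this direction. Your observation that $\gamma>0$ is essential is correct, but that obstruction lives in the other implication.
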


Note that for $\tau = 1$ we have $S^-(\alpha) = R^-(\alpha)$. 
We could also consider translated upper parts of parabolic rectangles but this can be included in the translation of lower parts.

\begin{proof}
Assume that $w\in A^+_q(\gamma)$, $q>1$.
Let $R \subset \mathbb R^{n+1}$ be a parabolic rectangle with side length $L$.
Since $R^\pm(\alpha)\subset R^\pm(\gamma)$,
we have
\begin{align*}
\biggl( \dashint_{R^-(\alpha)} w \biggr) \biggl( \dashint_{R^+(\alpha)} w^{\frac{1}{1-q}} \biggr)^{q-1}
\leq
\biggl( \frac{1-\gamma}{1-\alpha} \biggr)^q
\biggl( \dashint_{R^-(\gamma)} w \biggr) \biggl( \dashint_{R^+(\gamma)} w^{\frac{1}{1-q}} \biggr)^{q-1} 
\leq C_1 ,
\end{align*}
where $C_1 = ((1-\gamma) / (1-\alpha) )^q [w]_{A^+_q(\gamma)} $.
This proves the case $\tau=1$.
By H\"older's inequality, it follows that
\[
\dashint_{R^-(\alpha)} w \leq C_1 \biggl( \dashint_{R^+(\alpha)} w^{\frac{1}{1-q}} \biggr)^{1-q}
\leq
C_1 \dashint_{R^+(\alpha)} w .
\]
Choose $N\in\mathbb{N}$ and $0\leq\beta<1$ such that
$\tau = N + \beta$.
Denote 
$S_k^-(\alpha) = R^+(\alpha) - (0, k (1+\alpha) L^p)$ for $k\in\{1,\dots, N\}$.
Then iterating the previous inequality, we get
\begin{align*}
\dashint_{S_N^-(\alpha)} w
\leq
C_1 \dashint_{S_{N-1}^-(\alpha)} w
\leq
C_1^{N-1} \dashint_{S_{1}^-(\alpha)} w
= 
C_1^{N-1} \dashint_{R^-(\alpha)} w .
\end{align*}
This shows the claim whenever $\beta=0$.

If $\beta>0$,
we partition $S^-(\alpha)$ into subrectangles $U^-_{i}(\alpha)$ with spatial side length $\beta^{1/p} L$ and time length $\beta (1-\alpha) L^p$ such that the overlap of $\{U^-_{i}(\alpha)\}_i$ is bounded by $2^{n+1}$. 
This can be done by dividing each spatial edge of $S^-(\alpha)$ into $\lceil \beta^{-1/p} \rceil$ equally long
subintervals with an overlap bounded by $2$, and the time interval of $S^-(\alpha)$ into $\lceil \beta^{-1} \rceil$ equally long subintervals with an overlap bounded by $2$.
We observe that every $U^+_{i}(\alpha)$ is contained in $S_N^-(\alpha)$.
Then it holds that
\begin{align*}
\int_{S^-(\alpha)} w \leq \sum_i \int_{U_i^-(\alpha)} w \leq C_1
\sum_i \int_{U_i^+(\alpha)} w \leq 2^{n+1} C_1 \int_{S_N^-(\alpha)} w .
\end{align*}
Therefore, we have
\begin{align*}
\dashint_{S^-(\alpha)} w \leq 2^{n+1} C_1 \dashint_{S_N^-(\alpha)} w
\leq
2^{n+1} C_1^{N} \dashint_{R^-(\alpha)} w
\leq
2^{n+1} C_1^{\tau} \dashint_{R^-(\alpha)} w .
\end{align*}
We conclude that
\begin{align*}
\biggl( \dashint_{S^-(\alpha)} w \biggr) \biggl( \dashint_{R^+(\alpha)} w^{\frac{1}{1-q}} \biggr)^{q-1}
\leq
2^{n+1} C_1^{\tau} \biggl( \dashint_{R^-(\alpha)} w \biggr) \biggl( \dashint_{R^+(\alpha)} w^{\frac{1}{1-q}} \biggr)^{q-1}
\leq
2^{n+1} C_1^{\tau+1} .
\end{align*}
By letting $q\to1$, we obtain the same conclusion for $A^+_1(\gamma)$.

We prove the other direction.
Let $R_0 \subset \mathbb R^{n+1}$ be an arbitrary parabolic rectangle.
Without loss of generality, we may assume that the center of $R_0$ is the origin.
Let  $m$ be the smallest integer with
\[
m \geq \log_2 \biggl( \frac{\tau(1+\alpha)}{1-\alpha} \biggr) + \frac{1}{p-1} \biggl( 1 + \log_2 \frac{\tau(1+\alpha)}{\gamma} \biggr) + 2 .
\]
Then there exists $0 \leq \varepsilon < 1$ such that
\[ 
m = \log_2 \biggl( \frac{\tau(1+\alpha)}{1-\alpha} \biggr) +  \frac{1}{p-1} \biggl( 1 + \log_2 \frac{\tau(1+\alpha)}{\gamma} \biggr) + 2 + \varepsilon .
\]
We partition $R^+_0(\gamma) = Q(0, L) \times (\gamma L^p, L^p)$ by dividing each of its spatial edges into $2^m$ equally long intervals and the time interval into 
$\lceil (1-\gamma)2^{mp}/(1-\alpha)\rceil$ equally long intervals.
Denote the obtained rectangles by $U^+_{i,j}$ with $i \in \{1,\dots,2^{mn}\}$ and  $j \in \{1,\dots,\lceil (1-\gamma)2^{mp}/(1-\alpha)\rceil\}$.
The spatial side length of $U^+_{i,j}$ is $l = l_x(U^+_{i,j}) =L/2^m$
and the time length is
\[
l_t(U^+_{i,j}) = \frac{(1-\gamma)L^p}{\lceil (1-\gamma)2^{mp}/(1-\alpha)\rceil} .
\]
For every $U^+_{i,j}$, there exists a unique rectangle $R_{i,j}^+(\alpha)$ that has the same top as $U^+_{i,j}$.
Our aim is to construct a chain from each $U^+_{i,j}$ to a central rectangle which is of the same form as $R_{i,j}^+(\alpha)$ and is contained in $R_0^+$. 
This central rectangle will be specified later.
First, we construct a chain with respect to the spatial variable.
Fix $U^+_{i,j}$.
Let 
\[
P_0^+ = R^+_{i,j}(\alpha) = Q_i \times (t_j - (1-\alpha)l^p, t_j)
\]
and
\[
P_0^- = S^-_{i,j}(\alpha) = R^+_{i,j}(\alpha) - (0, \tau (1+\alpha) l^p) .
\]
Here $S^-_{i,j}(\alpha)$ is the translated lower part of $R_{i,j}$.
We construct a chain of cubes from $Q_i$ to the central cube $Q(0,l)$.
Let $Q'_0 = Q_i = Q(x_i, l)$ and set
\[
Q'_k = Q'_{k-1} - \frac{x_i}{\abs{x_i}} \frac{\theta l}{2}, 
\quad k \in \{0,\dots,N_i\} ,
\]
where $1 \leq \theta \leq \sqrt{n}$ depends on the angle between $x_i$ and the spatial axes and is chosen such that the center of $Q_k$ is on the boundary of $Q_{k-1}$.
We have
\[
\frac{1}{2^n} \leq \frac{\abs{ Q_k \cap Q_{k-1} }}{\abs{Q_k}} \leq \frac{1}{2}, 
\quad k \in \{0,\dots,N_i\} ,
\]
and $\abs{x_i} = \frac{\theta}{2} (L - bl)$,
where $b \in \{1, \dots, 2^m\}$ depends on the distance of $Q_i$ to the center of $Q_0 = Q(0,L)$.
The number of cubes in the spatial chain $\{Q'_k\}_{k=0}^{N_i}$ is
\[
N_i + 1 = \frac{\abs{x_i}}{\frac{\theta}{2} l} + 1 = \frac{L}{l} - b + 1 .
\]

Next, we also take the time variable into consideration in the construction of the chain.
Let
\[
P^+_k = Q'_k \times ( t_j - (1-\alpha)l^p - k\tau(1+\alpha)l^p, t_j - k\tau(1+\alpha)l^p )
\]
and $P^-_k = P^+_k - (0, \tau (1+\alpha)l^p)$,
for $k \in \{ 0, \dots, N_i \}$,
be the upper and the translated lower parts of a parabolic rectangle respectively.
These will form a chain of parabolic rectangles from $U^+_{i,j}$ to the eventual central rectangle.
Observe that every rectangle $P_{N_i}$ coincides spatially for all pairs $(i,j)$.
Consider $j=1$ and such $i$ that the boundary of $Q_i$ intersects the boundary of $Q_0$.
For such a cube $Q_i$, we have $b=1$, and thus $N = N_i = \frac{L}{l} - 1$.
In the time variable, we travel from $t_1$ the distance
\[
(N+1)\tau(1+\alpha)l^p + (1-\alpha)l^p = \tau (1+\alpha) L l^{p-1} + (1-\alpha) l^p .
\]
We show that the translated lower part of the final rectangle $P^-_N$ is contained in $R_0^+$.
To this end, we subtract the time length of $U^+_{i,1}$ from the distance above and observe that it is less than the time length of $R_0 \setminus R_0^+(\gamma)$.
This follows from the computation
\begin{align*}
&\tau (1+\alpha) L l^{p-1} + (1-\alpha) l^p - \frac{(1-\gamma)L^p}{\lceil(1-\gamma)2^{mp}/(1-\alpha)\rceil}\\
&\qquad= \biggl( \frac{\tau(1+\alpha)}{2^{m(p-1)}} + \frac{1-\alpha}{2^{mp}} - \frac{1-\gamma}{\lceil (1-\gamma)2^{mp}/(1-\alpha) \rceil} \biggr) L^p \\
&\qquad\leq \biggl( \frac{\tau(1+\alpha)}{2^{m(p-1)}} + \frac{1-\alpha}{2^{mp}} - \frac{1-\gamma}{2\frac{(1-\gamma)2^{mp}}{1-\alpha}} \biggr) L^p  \\
&\qquad= \biggl( \frac{\tau(1+\alpha)}{2^{m(p-1)}} + \frac{1-\alpha}{2^{mp+1}} \biggr) L^p 
\leq 2 \frac{\tau(1+\alpha)}{2^{m(p-1)}} L^p \leq \gamma L^p ,
\end{align*}
since 
\[
m \geq \frac{1}{p-1} \biggl( 1 + \log_2 \frac{\tau(1+\alpha)}{\gamma} \biggr) .
\]
This implies that $P^-_N \subset R_0^+$.
Denote these rectangles $P_N^+$ and $P_N^-$ by $\mathfrak{R}^+$ and $\mathfrak{R}^-$, respectively.
These are the central rectangles where all chains will eventually end.

Let $j=1$ and assume that $i$ is arbitrary. We extend the chain $\{P_k^+\}_{k=0}^{N_i}$ by $N - N_i$ rectangles into the negative time direction such that the final rectangle coincides with the central rectangle $\mathfrak{R}^+$.
More precisely, we consider $Q'_{k+1} = Q'_{N_i}$, 
\[
P^+_{k+1} = P^+_{k} - (0, \tau (1+\alpha)l^p) 
\quad\text{and}\quad
P^-_{k+1} = P^+_{k+1} - (0, \tau (1+\alpha)l^p)
\]
for $k \in \{N_i, \dots, N-1\}$. 
For every $j \in \{2,\dots,\lceil (1-\gamma)2^{mp}/(1-\alpha)\rceil \}$, we consider a similar extension of the chain.
The final rectangles of the chains coincide for fixed $j$ and for every $i$.
Moreover, every chain is of the same length $N+1$, and it holds that
\[
\frac{1}{2^n} \leq \eta = \frac{\lvert P_k^+ \cap P_{k-1}^- \rvert}{\lvert P_k^+ \rvert} \leq 1 .
\]

Then we consider an index $j \in \{2,\dots,\lceil(1-\gamma)2^{mp}/(1-\alpha)\rceil \}$ related to the time variable.
The time distance between the current ends of the chains for pairs $(i,j)$ and $(i,1)$ is
\[
(j-1) \frac{(1-\gamma) L^p}{\lceil (1-\gamma)2^{mp}/(1-\alpha)\rceil} .
\]
Our objective is to have the final rectangle of the continued chain for $(i,j)$ to coincide with the end of the chain for $(i,1)$, that is, with the central rectangle $\mathfrak{R}^+$.
To achieve this, we modify $2^{m-1}$ intersections of $P^+_k$ and $P^-_{k+1}$ by shifting $P_k^+$ and also add a chain of $M_j$ rectangles traveling to the negative time direction into the chain $\{P_k^+\}_{k=0}^{N}$.
We shift every $P_k^+$ and $P_k^-$, $k \in \{ 1, \dots, 2^{m-1} \}$, by a $\beta_j$-portion of their temporal length more than the previous rectangle was shifted, that is, we move each $P_k^+$ and $P_k^-$ into the negative time direction a distance of $k \beta_j (1-\alpha) l^p$.
The values of $M_j \in \mathbb{N}$ and $0\leq \beta_j <1$ will be chosen later.
In other words, modify the definitions of $P^+_k$ for $ k \in \{ 1, \dots, 2^{m-1} \}$ by
\[
P^+_k = Q'_k \times ( t_j - (1-\alpha)l^p - k( \tau(1+\alpha) + \beta_j (1-\alpha)) l^p, t_j - k( \tau(1+\alpha) + \beta_j (1-\alpha)) l^p) ,
\]
and then add $M_j$ rectangles defined by
\[
P^+_{k+1} = P^+_{k} - (0, \tau(1+\alpha)l^p) 
\quad\text{and}\quad
P^-_{k+1} = P^+_{k+1} - (0, \tau(1+\alpha)l^p) 
\]
for $k \in \{N,\dots, N + M_j-1\}$.
Note that there exists $1 \leq \omega < 2$ such that 
\[
\omega \frac{(1-\gamma)2^{mp}}{1-\alpha} 
=\biggl\lceil\frac{(1-\gamma)2^{mp}}{(1-\alpha)}\biggr\rceil.
\]
We would like to find such $0\leq \beta_j <1$ and $M_j \in \mathbb{N}$ that 
\[
(j-1) \frac{(1-\gamma) L^p}{\lceil (1-\gamma)2^{mp}/(1-\alpha)\rceil} - M_j \frac{\tau(1+\alpha) L^p}{2^{mp}} 
= 2^{m-1} \beta_j \frac{(1-\alpha) L^p}{2^{mp}} ,
\]
which is equivalent with
\[
(j-1)\omega^{-1} (1-\alpha) - M_j \tau (1+\alpha) = 2^{m-1} \beta_j (1-\alpha) .
\]
With this choice all final rectangles coincide.
Choose $M_j \in \mathbb{N}$ such that
\[
M_j \tau (1+\alpha) \leq (j-1) \omega^{-1} (1-\alpha) < (M_j + 1)\tau(1+\alpha) ,
\]
that is,
\[
0 \leq \xi = (j-1) \omega^{-1} (1-\alpha) - M_j \tau (1+\alpha) < \tau(1+\alpha)
\]
and
\begin{align*}
\frac{(j-1)(1-\alpha)}{2 \tau (1+\alpha)} -1 
&\leq \frac{(j-1)(1-\alpha)}{\omega \tau (1+\alpha)} -1 
< M_j \leq \frac{(j-1)(1-\alpha)}{\omega \tau (1+\alpha)} \\
&\leq \frac{(j-1)(1-\alpha)}{\tau(1+\alpha)} .
\end{align*}
By choosing $0\leq \beta_j <1$ such that
\[
\xi = 2^{m-1} \beta_j (1-\alpha) = 2^{\frac{1}{p-1} + 1 + \varepsilon} \biggl( \frac{1+\alpha}{\gamma} \biggr)^\frac{1}{p-1} \beta_j \tau (1+\alpha) ,
\]
we have
\[
\beta_j = 2^{-\frac{1}{p-1} - 1 - \varepsilon} \biggl( \frac{\gamma}{1+\alpha} \biggr)^\frac{1}{p-1} \frac{\xi}{\tau(1+\alpha)} .
\]
Observe that $0\leq \beta_j \leq \frac{1}{2}$ for every $j$.
For measures of the intersections of the modified rectangles, it holds that
\[
\frac{1}{2^{n+1}} \leq \frac{\lvert P_k^+ \cap P_{k-1}^- \rvert}{\lvert P_k^+ \rvert} = \eta (1-\beta_j) \leq 1 
\]
for $ k \in \{ 1, \dots, 2^{m-1} \}$, and thus
\[
\frac{1}{2^{n+1}} \leq \tilde{\eta}_j = \frac{\lvert P_k^+ \cap P_{k-1}^- \rvert}{\lvert P_k^+ \rvert} \leq 1 
\]
for every $k \in \{1,\dots, N + M_j\}$.
Fix $U^+_{i,j}$.
Denote $\delta = 1/(q-1)$.
H\"older's inequality and the assumption \eqref{translatedRHI} imply that
\begin{align*}
\biggl( \dashint_{P_{k}^+} w^{-\delta} \biggr)^{-1/\delta} &\leq \Biggl( \frac{\lvert P_{k}^+ \cap P_{k-1}^- \rvert }{\lvert P_{k}^+ \rvert} \dashint_{P_{k}^+ \cap P_{k-1}^-} w^{-\delta} \Biggr)^{-1/\delta} \leq 2^{\frac{n+1}{\delta}} \biggl( \dashint_{P_{k}^+ \cap P_{k-1}^-} w^{-\delta} \biggr)^{-1/\delta} \\
&\leq 2^{\frac{n+1}{\delta}} \dashint_{P_{k}^+ \cap P_{k-1}^-} w \leq 2^{\frac{n+1}{\delta}} \frac{\lvert P_{k-1}^- \rvert}{\lvert P_{k}^+ \cap P_{k-1}^- \rvert} \dashint_{P_{k-1}^-} w\\
&\leq 2^{(n+1)(1+\frac{1}{\delta})} \dashint_{P_{k-1}^-} w 
\leq c_0 \biggl( \dashint_{P_{k-1}^+} w^{-\delta} \biggr)^{-1/\delta}
\end{align*}
for every $k \in \{1,\dots, N + M_j\}$, where $c_0 = 2^{(n+1)(1+\frac{1}{\delta})} C = 2^{(n+1)q} C$.
By iterating the inequality above, we obtain
\begin{align*}
\biggl( \dashint_{\mathfrak R^+} w^{-\delta} \biggr)^{-1/\delta} &= \biggl( \dashint_{P_{N+M_j}^+} w^{-\delta} \biggr)^{-1/\delta}
\leq c_0^{N+M_j} \biggl( \dashint_{P_{0}^+} w^{-\delta} \biggr)^{-1/\delta} \\
&\leq c_1 \biggl( \dashint_{R_{i,j}^+(\alpha)} w^{-\delta} \biggr)^{-1/\delta} ,
\end{align*}
where $c_1 = c_0^{s-1} $
and
\begin{align*}
N+1+M_j &= \frac{L}{l}+M_j \leq 2^m + (j-1) \frac{1-\alpha}{\tau(1+\alpha)} \\
&\leq 2^m + \frac{(1-\gamma)2^{mp}}{1-\alpha} \frac{1-\alpha}{\tau(1+\alpha)} 
\leq 2^m + \frac{2^{mp}}{\tau} \leq 2^{mp+1} \\
&\leq 2^{\frac{p}{p-1} + 3p +1} \biggl( \frac{\tau(1+\alpha)}{\gamma} \biggr)^\frac{p}{p-1} \biggl( \frac{\tau(1+\alpha)}{1-\alpha} \biggr)^p = s
\end{align*}
for every $j$.
Recall that $i \in \{1,\dots,2^{mn}\}$ and  $j \in \{1,\dots,\lceil (1-\gamma)2^{mp}/(1-\alpha)\rceil\}$.
We observe that
\begin{align*}
\dashint_{R^+_0(\gamma)} w^{-\delta} &= \sum_{i,j} \frac{\lvert U^+_{i,j} \rvert}{\lvert R^+_0(\gamma) \rvert} \dashint_{U^+_{i,j}} w^{-\delta} \leq \frac{1}{2^{mn} \Bigl\lceil\frac{(1-\gamma)2^{mp}}{(1-\alpha)}\Bigr\rceil} \sum_{i,j}  \dashint_{R_{i,j}^+(\alpha)} w^{-\delta} \\
&\leq \frac{1}{2^{mn} \Bigl\lceil\frac{(1-\gamma)2^{mp}}{(1-\alpha)}\Bigr\rceil} \sum_{i,j} c_1^{\delta}  \dashint_{\mathfrak R^+} w^{-\delta} \\
&= \frac{1}{2^{mn} \Bigl\lceil\frac{(1-\gamma)2^{mp}}{(1-\alpha)}\Bigr\rceil} 2^{mn} \biggl\lceil\frac{(1-\gamma)2^{mp}}{(1-\alpha)}\biggr\rceil c_1^{\delta} \dashint_{\mathfrak R^+} w^{-\delta} \\
&= c_1^{\delta}  \dashint_{\mathfrak R^+} w^{-\delta}.
\end{align*}
Thus, we have
\begin{equation}
\label{upperRHI}
\biggl( \dashint_{\mathfrak R^+} w^{-\delta} \biggr)^{-1/\delta} \leq c_1 \biggl( \dashint_{R^+_0(\gamma)} w^{-\delta} \biggr)^{-1/\delta} .
\end{equation}

We can apply a similar chaining argument in the reverse time direction for $R_0^-(\gamma)$ with the exception that we also extend (and modify if needed) every chain such that the corresponding central rectangles coincide with $\mathfrak{R}^+$ and $\mathfrak{R}^-$.
A rough upper bound for the number of rectangles needed for the additional extension is given by
\[
\biggl\lceil \frac{2\gamma L^p}{\tau(1+\alpha)l^p} \biggr\rceil = \biggl\lceil \frac{2\gamma }{\tau(1+\alpha)} 2^{mp} \biggr\rceil \leq 2^{mp+1} .
\]
Thus, the constant $s$ above is two times larger in this case. Then again by iterating H\"older's inequality and the assumption \eqref{translatedRHI}, we obtain
\begin{equation}
\label{lowerRHi}
\dashint_{R^-_0(\gamma)} w \leq c_2 \dashint_{\mathfrak R^-} w ,
\end{equation}
where $c_2 = c_0^{2s-1}$.
By combining \eqref{upperRHI} and \eqref{lowerRHi}, we conclude that
\begin{align*}
\dashint_{R^-_0(\gamma)} w \leq c_2 \dashint_{\mathfrak R^-} w \leq c_2 C \biggl( \dashint_{\mathfrak R^+} w^{-\delta} \biggr)^{-1/\delta} \leq c \biggl( \dashint_{R^+_0(\gamma)} w^{-\delta} \biggr)^{-1/\delta}, 
\end{align*}
where $c = c_1 c_2 C = c_0^{3s-2} C 
= 2^{(n+1)q(3s-2)} C^{3s-1}$.
Since $R_0$ was an arbitrary parabolic rectangle in $\mathbb R^{n+1}$, it holds that $w \in A^+_q(\gamma)$. 
This completes the proof for $q>1$. Letting $q\to1$ in the argument above, we obtain the claim for $q=1$.
\end{proof}

\section{Characterizations of parabolic $A_\infty$}

This section discusses several characterizations of the parabolic Muckenhoupt $A_\infty$ condition
in terms of quantitative and qualitative measure conditions. A connection to a parabolic Gurov--Reshetnyak condition is also included.
See~\cite{Korenovskii2007,reshetnyak1994} for the Gurov--Reshetnyak class.

\begin{theorem}
\label{thm:Ainfty}
Let $0<\gamma<1$ and $w$ be a weight.
The following conditions are equivalent.
\begin{enumerate}[(i),topsep=5pt,itemsep=5pt]
\item $w\in A^+_q(\gamma)$ for some $1<q<\infty$.

\item There exist constants $K,\delta >0$ such that
\[
\frac{\lvert E \rvert}{\lvert R^+(\gamma) \rvert} \leq K \biggl( \frac{w(E)}{w(R^-(\gamma))} \biggr)^\delta
\]
for every parabolic rectangle $R\subset\mathbb{R}^{n+1}$ and 
measurable set $E \subset R^+(\gamma)$.

\item For every $0<\alpha<1$ there exists $0<\beta<1$ such that for every parabolic rectangle $R$ and for every measurable set $E \subset R^+(\gamma)$ for which $w(E) < \beta w(R^-(\gamma))$ we have $\lvert E \rvert < \alpha \lvert R^+(\gamma) \rvert$.

\item There exist $0<\alpha,\beta<1$ such that for every parabolic rectangle $R$ and for every measurable set $E \subset R^+(\gamma)$ for which $w(E) < \beta w(R^-(\gamma))$ we have $\lvert E \rvert < \alpha \lvert R^+(\gamma) \rvert$.

\item There exist $0<\alpha,\beta <1$ such that 
for every parabolic rectangle $R$ we have
\[
\lvert R^+(\gamma) \cap \{ w < \beta w_{R^-(\gamma)} \} \rvert < \alpha \lvert R^+(\gamma) \rvert .
\]

\item There exists $0 < \varepsilon < 1$ such that 
for every parabolic rectangle $R$ we have
\[
\dashint_{R^+(\gamma)} (w - w_{R^-(\gamma)})^- \leq \varepsilon w_{R^-(\gamma)} .
\]
\end{enumerate}
\end{theorem}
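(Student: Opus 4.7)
My plan is to close the cycle $(i) \Rightarrow (ii) \Rightarrow (iii) \Rightarrow (iv) \Rightarrow (v) \Rightarrow (vi) \Rightarrow (i)$, with only the last implication requiring substantial work. For $(i) \Rightarrow (ii)$, I would write $|E| = \int_E w^{1/q}\, w^{-1/q}$ and apply H\"older's inequality to factor out $w(E)^{1/q}$; the remaining factor $\int_{R^+(\gamma)} w^{-1/(q-1)}$ is controlled by the $A_q^+(\gamma)$ condition together with the equality $|R^+(\gamma)| = |R^-(\gamma)|$, yielding (ii) with $\delta = 1/q$ and $K = [w]_{A_q^+(\gamma)}^{1/q}$. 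The implications $(ii) \Rightarrow (iii) \Rightarrow (iv)$ are immediate: first choose $\beta = (\alpha/K)^{1/\delta}$, then specialize $\alpha$. For $(iv) \Rightarrow (v)$, I would test (iv) against $E = R^+(\gamma) \cap \{w < \beta' w_{R^-(\gamma)}\}$ for some $\beta' < \beta$; using $|R^+(\gamma)| = |R^-(\gamma)|$, one obtains $w(E) \le \beta' w_{R^-(\gamma)} |R^+(\gamma)| = \beta' w(R^-(\gamma)) < \beta w(R^-(\gamma))$, so (iv) forces $|E| < \alpha |R^+(\gamma)|$, giving (v) with parameters $(\alpha,\beta')$.

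For $(v) \Rightarrow (vi)$, I would use the layer-cake representation
\[
\dashint_{R^+(\gamma)} (w_{R^-(\gamma)} - w)^+ = w_{R^-(\gamma)} \int_0^1 \frac{|R^+(\gamma) \cap \{w < s\, w_{R^-(\gamma)}\}|}{|R^+(\gamma)|}\, ds
\]
and split the integral at $s = \beta$: the integrand is bounded by $\alpha$ via (v) on $[0,\beta]$ and trivially by $1$ on $[\beta,1]$, giving $\varepsilon \le 1 - \beta(1-\alpha) \in (0,1)$. Note that $(w - w_{R^-(\gamma)})^- = (w_{R^-(\gamma)} - w)^+$, matching the formulation in (vi).

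The main obstacle is the closing implication $(vi) \Rightarrow (i)$. My approach is to run a parabolic Calder\'on--Zygmund decomposition at dyadic heights $\lambda^k$ for a suitably chosen $\lambda > 1$: at each level $k$ one extracts a maximal family of disjoint subrectangles whose upper parts have $w$-average in a fixed range above $\lambda^k$; on each such subrectangle (vi) guarantees that the mass fraction where $w$ falls below $(1-\varepsilon_0)\lambda^k$ is bounded away from $1$. Propagating this estimate across consecutive levels produces a geometric decay for $|R^+(\gamma) \cap \{w > \lambda^k\}|$ in $k$, which self-improves into a reverse H\"older inequality of the form
\[
\biggl(\dashint_{R^+(\gamma)} w^{1+\sigma}\biggr)^{1/(1+\sigma)} \le C \dashint_{\mathfrak{R}^-} w
\]
for some $\sigma > 0$, where $\mathfrak{R}^-$ is a translated lower part of an enlarged rectangle. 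Combined with Theorem~\ref{thm:timelagchange}, which lets me replace $\mathfrak{R}^-$ by $R^-(\gamma)$ up to a multiplicative constant, this reverse H\"older inequality is equivalent to $w \in A_q^+(\gamma)$ with $q = 1 + 1/\sigma$. The hard part will be carrying out the stopping-time selection in the parabolic geometry: the selected subrectangles must be placed so that their upper and lower parts both remain inside $R$ (or a controlled dilate), and the mismatch between the lower part used in the stopping condition and the one that appears when (vi) is applied at the next scale must be absorbed by the chaining machinery of Theorem~\ref{thm:timelagchange}.
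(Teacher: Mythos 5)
Your cycle $(i)\Rightarrow(ii)\Rightarrow(iii)\Rightarrow(iv)\Rightarrow(v)\Rightarrow(vi)$ is fine: the first four implications match the paper's arguments, and your $(v)\Rightarrow(vi)$ via the layer-cake identity $\dashint_{R^+(\gamma)}(w_{R^-(\gamma)}-w)^+ = w_{R^-(\gamma)}\int_0^1 |R^+(\gamma)\cap\{w<s\,w_{R^-(\gamma)}\}|\,/\,|R^+(\gamma)|\,ds$ is a valid (slightly cleaner) alternative to the paper's direct split of the integration domain. The real problem is the closing implication $(vi)\Rightarrow(i)$, where your sketch is aimed at the wrong quantity and in the wrong time direction.

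Condition $(vi)$ controls how much $w$ can drop \emph{below} $w_{R^-(\gamma)}$ on $R^+(\gamma)$, i.e.\ it is information about sublevel sets of $w$ (equivalently superlevel sets of $w^{-1}$), whereas the $A_q^+$ condition requires you to bound $\dashint_{R^+(\gamma)} w^{-1/(q-1)}$. You instead run a stopping-time on upper parts with high $w$-average and aim for geometric decay of $|R^+(\gamma)\cap\{w>\lambda^k\}|$, yielding a reverse H\"older inequality $\bigl(\dashint_{R^+(\gamma)}w^{1+\sigma}\bigr)^{1/(1+\sigma)}\le C\,\dashint_{\mathfrak R^-}w$. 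Even granting that inequality, it does \emph{not} imply $w\in A_q^+(\gamma)$: it bounds large values of $w$ in the future by past averages, but says nothing about $\int_{R^+(\gamma)}w^{-\delta}$. Concretely, $w(x,t)=e^{-t}$ satisfies your proposed RHI (the future average is exponentially smaller than the past average) yet fails $A_q^+(\gamma)$ for every $q$, and indeed fails $(vi)$; so ``this reverse H\"older inequality is equivalent to $w\in A_q^+(\gamma)$ with $q=1+1/\sigma$'' is false. Note also a directional mismatch with the paper's own Theorem~\ref{thm:RHI}, which goes the other way (past $L^{1+\varepsilon}$ controlled by future $L^1$) and is a \emph{consequence} of $A_q^+$, not an equivalent formulation. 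The paper's route through $(iv)\Rightarrow(i)$ does what is actually required: Lemma~\ref{lemma:measureweight-estimate} performs the Calder\'on--Zygmund stopping with respect to the measure $w\,dx\,dt$ on $f=w^{-1}$ (stopping when $\dashint_{U^-}f\,d\mu>\lambda$, i.e.\ where $w$ is small), then Cavalieri's principle produces $\dashint_{R^+_0(\gamma)}w^{-\varepsilon}\lesssim(w_{U^{\sigma,-}_0})^{-\varepsilon}$, which after Theorem~\ref{thm:timelagchange} is exactly the $A^+_{1+1/\varepsilon}(\gamma)$ condition. The chaining machinery and the forward-doubling Lemma~\ref{lemma:doublingforward} are needed to close the geometry, and the auxiliary time-lag reduction is essential because the stopping rectangles' lower parts drift out of $R^-_0(\gamma)$. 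Your sketch would need to be redesigned from scratch around a decomposition for $w^{-1}$ before the remaining details could be filled in.
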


The proof is presented in subsections below.

\subsection{Quantitative measure condition}

We show that $(i)\Leftrightarrow (ii)$ in Theorem~\ref{thm:Ainfty}.
The following theorem also holds in the case $p=1$.

\begin{theorem}
Let $0\leq \gamma<1$ and $w$ be a weight. 
Then $w\in A^+_q(\gamma)$ for some $1<q<\infty$ if and only if there exist constants $K,\delta >0$ such that
\[
\frac{\lvert E \rvert}{\lvert R^+(\gamma) \rvert} \leq K \biggl( \frac{w(E)}{w(R^-(\gamma))} \biggr)^\delta 
\]
for every parabolic rectangle $R\subset\mathbb{R}^{n+1}$ and 
measurable set $E \subset R^+(\gamma)$.
\end{theorem}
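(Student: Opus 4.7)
The plan is to prove the two implications separately.

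For the forward direction ($w\in A_q^+(\gamma)$ implies the quantitative bound), I would apply H\"older's inequality to the identity $|E|=\int_E w^{1/q}w^{-1/q}$ with exponents $q$ and $q'=q/(q-1)$, obtaining
\[
|E|\leq w(E)^{1/q}\biggl(\int_{R^+(\gamma)}w^{1/(1-q)}\biggr)^{(q-1)/q}.
\]
Dividing by $|R^+(\gamma)|$, applying the $A_q^+(\gamma)$ bound to control the second factor by $[w]_{A_q^+(\gamma)}^{1/q}\bigl(\dashint_{R^-(\gamma)}w\bigr)^{-1/q}$, and using $|R^-(\gamma)|=|R^+(\gamma)|$ delivers the claimed inequality with $K=[w]_{A_q^+(\gamma)}^{1/q}$ and $\delta=1/q$.

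For the reverse direction, the goal is to produce a small $\varepsilon>0$ and prove $\dashint_{R^+(\gamma)}w^{-\varepsilon}\leq Cw_{R^-(\gamma)}^{-\varepsilon}$, which is precisely $A_q^+(\gamma)$ with $q=1+1/\varepsilon$. I would first reduce to $\delta\in(0,1)$: if $w(E)/w(R^-(\gamma))\leq 1$, decreasing $\delta$ only enlarges the right-hand side, while if this ratio exceeds $1$ the trivial bound $|E|/|R^+(\gamma)|\leq 1$ already suffices, so replacing $K$ by $\max(K,1)$ allows any smaller $\delta$. Next, applying the hypothesis to the level set $E_\lambda=R^+(\gamma)\cap\{w<\lambda\}$ together with $w(E_\lambda)\leq\lambda|E_\lambda|$ and solving for $|E_\lambda|$ yields
\[
|E_\lambda|\leq K^{1/(1-\delta)}\lambda^{\delta/(1-\delta)}\,\frac{|R^+(\gamma)|^{1/(1-\delta)}}{w(R^-(\gamma))^{\delta/(1-\delta)}}.
\]

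The conclusion follows from the layer-cake identity
\[
\int_{R^+(\gamma)}w^{-\varepsilon}=\varepsilon\int_0^\infty\lambda^{-\varepsilon-1}|E_\lambda|\,d\lambda,
\]
split at the threshold $\lambda_0=w_{R^-(\gamma)}$. For $\lambda>\lambda_0$, only the trivial bound $|E_\lambda|\leq|R^+(\gamma)|$ is used, contributing $|R^+(\gamma)|w_{R^-(\gamma)}^{-\varepsilon}$. For $\lambda<\lambda_0$ the superlevel-set estimate above is inserted; the resulting integral $\int_0^{\lambda_0}\lambda^{\delta/(1-\delta)-\varepsilon-1}d\lambda$ converges precisely when $\varepsilon<\delta/(1-\delta)$, and substituting $\lambda_0=w(R^-(\gamma))/|R^+(\gamma)|$ makes the exponents of $|R^+(\gamma)|$ and $w(R^-(\gamma))$ collapse to the same factor $|R^+(\gamma)|w_{R^-(\gamma)}^{-\varepsilon}$. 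Dividing by $|R^+(\gamma)|$ and raising the resulting estimate to the power $q-1=1/\varepsilon$ produces the $A_q^+(\gamma)$ condition with constant $C^{q-1}$.

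The main obstacle is the bookkeeping of exponents in the layer-cake splitting: choosing the cut-off $\lambda_0=w_{R^-(\gamma)}$ so that the two contributions combine cleanly into $|R^+(\gamma)|w_{R^-(\gamma)}^{-\varepsilon}$, and verifying the convergence condition $\varepsilon<\delta/(1-\delta)$. This constraint fixes the admissible range of the resulting exponent $q>1/\delta$, with a worse exponent as the quantitative gap $\delta$ degrades.
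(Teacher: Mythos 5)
Your proof is correct and follows essentially the same route as the paper: Hölder's inequality with the identity $|E|=\int_E w^{1/q}w^{-1/q}$ for the forward direction, and for the reverse direction a bound on sublevel sets of $w$ fed into the Cavalieri/layer-cake formula for $\int_{R^+(\gamma)}w^{-\varepsilon}$, split at the threshold $w_{R^-(\gamma)}$. The only cosmetic differences are that you work directly with $\delta$ and sublevel sets $\{w<\lambda\}$, whereas the paper substitutes $q=1/\delta$, $q'=q/(q-1)$ and uses superlevel sets $\{w^{-1}>\lambda\}$; these are identical up to the change of variable $\lambda\mapsto 1/\lambda$, and your convergence constraint $\varepsilon<\delta/(1-\delta)$ is the same as the paper's $\varepsilon<q'-1$.
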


\begin{proof}
Assume first that $w\in A^+_q(\gamma)$.
Let $E$ be a measurable subset of $R^+(\gamma)$.
By H\"older's inequality, we have
\begin{align*}
\frac{\lvert E \rvert}{\lvert R^+(\gamma) \rvert} &= \dashint_{R^+(\gamma)} \chi_{E} = \frac{1}{\lvert R^+(\gamma) \vert} \int_{R^+(\gamma)} w^{-\frac{1}{q}} w^{\frac{1}{q}} \chi_{E} \\
&\leq \frac{1}{\lvert R^+(\gamma) \vert} \biggl( \int_{R^+(\gamma)} w^{\frac{1}{1-q}} \biggr)^{\frac{q-1}{q}} \biggl( \int_{R^+(\gamma)} w \chi_{E} \biggr)^\frac{1}{q} \\
&= \frac{1}{\lvert R^+(\gamma) \vert^\frac{1}{q}} \biggl( \dashint_{R^+(\gamma)} w^{\frac{1}{1-q}} \biggr)^{\frac{q-1}{q}} \biggl( \int_{E} w \biggr)^\frac{1}{q} \\
&= \Biggl( \frac{w(R^-(\gamma))}{\lvert R^-(\gamma) \vert} \biggl( \dashint_{R^+(\gamma)} w^{\frac{1}{1-q}} \biggr)^{q-1} \Biggr)^\frac{1}{q} \biggl( \frac{w(E)}{w(R^-(\gamma))} \biggr)^\frac{1}{q} \\
&\leq [w]_{A^+_q(\gamma)}^\frac{1}{q} \biggl( \frac{w(E)}{w(R^-(\gamma))} \biggr)^\frac{1}{q} .
\end{align*}

Then we prove the other direction.
The assumption is equivalent with
\[
\frac{w(R^-(\gamma))}{w(E)} \leq K^q \biggl( \frac{\lvert R^+(\gamma) \rvert}{\lvert E \rvert} \biggr)^q ,
\]
where $K >0$, $q=\delta^{-1}>0$ and $E$ is a measurable subset of $R^+(\gamma)$.
Since the ratio of the Lebesgue measure of $R^+(\gamma)$ to the Lebesgue measure of $E$ is always greater than or equal to 1, we may assume without loss of generality that the exponent $q$ is strictly greater than 1.
Denote $E_\lambda = R^+(\gamma) \cap \{ w^{-1} > \lambda \}$.
We have $w(E_\lambda) \leq \lvert E_\lambda \rvert / \lambda$.
It follows that
\[
\frac{1}{K^q} w(R^-(\gamma)) \biggl( \frac{\lvert E_\lambda \rvert}{\lvert R^+(\gamma) \rvert} \biggr)^q \leq w(E_\lambda) \leq \frac{1}{\lambda} \lvert E_\lambda \rvert ,
\]
and hence we get
\[
\lvert E_\lambda \rvert \leq \frac{K^{q'}}{\lambda^{q'-1}} \frac{\lvert R^-(\gamma) \rvert^{q'}}{w(R^-(\gamma))^{q'-1}} ,
\]
where $q'=\frac{q}{q-1}$ is the conjugate exponent of $q$.
Letting
$0<\varepsilon<q'-1$ and applying Cavalieri's principle allows us to evaluate
\begin{align*}
\int_{R^+(\gamma)} w^{-\varepsilon} &= \varepsilon \int_0^\infty \lambda^{\varepsilon -1} \lvert R^+(\gamma) \cap \{ w^{-1} > \lambda \} \rvert \dla \\
&=
\varepsilon \int_0^{1/w_{R^-(\gamma)}} \lambda^{\varepsilon -1} \lvert E_\lambda \rvert \dla + \varepsilon \int_{ 1/w_{R^-(\gamma)} }^\infty \lambda^{\varepsilon -1} \lvert E_\lambda \rvert \dla \\
&\leq
\lvert R^+(\gamma) \rvert \biggl( \frac{\lvert R^-(\gamma) \rvert}{w(R^-(\gamma))} \biggr)^{\varepsilon} + \varepsilon K^{q'} \frac{\lvert R^- \rvert^{q'}}{w(R^-(\gamma))^{q'-1}}  \int_{1/w_{R^-(\gamma)}}^\infty \lambda^{\varepsilon-q'} \dla \\
&=
\lvert R^+(\gamma) \vert \biggl( \frac{\lvert R^-(\gamma) \rvert}{w(R^-(\gamma))} \biggr)^{\varepsilon} + \frac{\varepsilon K^{q'}}{q'-1-\varepsilon} \frac{\lvert R^-(\gamma) \rvert^{q'}}{w(R^-(\gamma))^{q'-1}} \biggl( \frac{\lvert R^-(\gamma) \rvert}{w(R^-(\gamma))} \biggr)^{\varepsilon-q'+1} \\
&= \biggl( 1 + \frac{\varepsilon K^{q'}}{q'-1-\varepsilon} \biggr) \lvert R^+(\gamma) \vert \biggl( \frac{\lvert R^-(\gamma) \rvert}{w(R^-(\gamma))} \biggr)^{\varepsilon} .
\end{align*}
Thus, we obtain
\[
\frac{w(R^-(\gamma))}{\lvert R^-(\gamma) \rvert} \biggl( \dashint_{R^+(\gamma)} w^{-\varepsilon} \biggr)^\frac{1}{\varepsilon} \leq c ,
\]
where $c^{\varepsilon} = 1 + \varepsilon K^{q'} / (q'-1-\varepsilon)$.
By taking the supremum over all parabolic rectangles, 
we conclude that $w\in A^+_{1+1/\varepsilon}$ 
and thus the proof is complete.
\end{proof}

\subsection{Qualitative measure condition}

We show that $(i)\Leftrightarrow (iv)$ in Theorem~\ref{thm:Ainfty}.

First we note that Theorem~\ref{thm:Ainfty}~$(ii)$ implies $(iii)$, 
since if $w(E) < \beta w(R^-(\gamma))$, then
\begin{align*}
\lvert E \rvert\leq K \biggl( \frac{w(E)}{w(R^-(\gamma))} \biggr)^\delta \lvert R^+(\gamma) \rvert \leq K \beta^\delta \lvert R^+(\gamma) \rvert  ,
\end{align*}
where we can choose $\beta$ small enough such that $K\beta^\delta\leq\alpha$.
The implication from $(iii)$ to $(iv)$ is immediate.

To prove the reverse implication from $(iv)$ to $(i)$,
we need the following auxiliary result.

\begin{lemma}
\label{lemma:doublingforward}
Let $0<\gamma,\alpha,\beta <1$.
Assume that $w$ satisfies the qualitative measure condition, that is, for every parabolic rectangle $R$ and for every measurable set $E \subset R^+(\gamma)$ for which $w(E) < \beta w(R^-(\gamma))$ it holds that $\lvert E \rvert < \alpha \lvert R^+(\gamma) \rvert$. Then we have the following properties.
\begin{enumerate}[(i)]
\item 
For every parabolic rectangle $R$ and every measurable set $E \subset R^+(\gamma)$ for which $\lvert E \rvert \geq \alpha \lvert R^+(\gamma) \rvert$ it holds that $w(E) \geq \beta w(R^-(\gamma))$.
\item
Let $\theta>0$.
For every parabolic rectangle $R$ and $0\leq\omega\leq \theta $
it holds that
\[
w(R^-(\gamma)) \leq C w(R^-(\gamma) + (0, \omega L^p)),
\]
where $C\geq1$ depends on $p,\gamma,\beta$ and $\theta $.
\end{enumerate}

\end{lemma}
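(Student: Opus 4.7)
The plan is to dispose of (i) by pure contraposition and then build (ii) from a sharp base case, a covering argument at a carefully tuned scale, and a short iteration.

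Part (i) is immediate: the hypothesis "$w(E)<\beta w(R^-(\gamma))\Rightarrow|E|<\alpha|R^+(\gamma)|$" is logically equivalent to "$|E|\geq\alpha|R^+(\gamma)|\Rightarrow w(E)\geq\beta w(R^-(\gamma))$". For part (ii), the key base case is obtained by applying (i) with $E=R^+(\gamma)$; since $|E|=|R^+(\gamma)|\geq\alpha|R^+(\gamma)|$ holds trivially, we get $w(R^+(\gamma))\geq\beta w(R^-(\gamma))$. Because $R^+(\gamma)=R^-(\gamma)+(0,(1+\gamma)L^p)$, this is exactly the conclusion of (ii) for the single value $\omega=1+\gamma$ with $C=\beta^{-1}$.

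For a smaller shift $\omega\in(0,1+\gamma]$ I would introduce sub-rectangles of side $L'=(\omega/(1+\gamma))^{1/p}L$, tuned precisely so that the natural forward shift $(1+\gamma)L'^p$ of such a rectangle equals $\omega L^p$. I would partition $R^-(\gamma)=Q(x,L)\times(t-L^p,t-\gamma L^p)$ into lower parts $R_j^-(\gamma)$ of such sub-rectangles using a spatial grid of $\lceil L/L'\rceil^n$ cubes and a time grid of $\lceil L^p/L'^p\rceil$ intervals, arranged so that the overlap is at most $2^{n+1}$. The corresponding upper parts satisfy $R_j^+(\gamma)=R_j^-(\gamma)+(0,\omega L^p)$, so they tile $R^-(\gamma)+(0,\omega L^p)$ with the same overlap. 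Applying the base case to each $R_j$ and summing yields
\[
w(R^-(\gamma))\leq\sum_j w(R_j^-(\gamma))\leq\beta^{-1}\sum_j w(R_j^+(\gamma))\leq 2^{n+1}\beta^{-1}w(R^-(\gamma)+(0,\omega L^p)).
\]

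For general $\omega\in(0,\theta]$, I would decompose $\omega=k(1+\gamma)+r$ with $k\in\mathbb{Z}_{\geq 0}$, $r\in[0,1+\gamma)$, and $k\leq\theta/(1+\gamma)$. The iteration relies on the observation that $R^-(\gamma)+(0,j(1+\gamma)L^p)$ is itself the lower part of the translated parabolic rectangle $R(x,t+j(1+\gamma)L^p,L)$, which has the same side length $L$, so the base case applies along this chain. Doing so $k$ times gives $w(R^-(\gamma))\leq\beta^{-k}w(R^-(\gamma)+(0,k(1+\gamma)L^p))$, and one final application of the covering step with residual shift $rL^p$ to the last rectangle in the chain yields $C=2^{n+1}\beta^{-(k+1)}$, which depends only on $n,p,\gamma,\beta,\theta$.

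The main obstacle is the covering step for intermediate shifts: the geometric matching $(1+\gamma)L'^p=\omega L^p$ is what makes the shifted sub-rectangles $R_j^+(\gamma)$ coincide with the tile pattern needed to cover $R^-(\gamma)+(0,\omega L^p)$, and one must verify carefully that both the original tiling of $R^-(\gamma)$ by the $R_j^-(\gamma)$ and its time-translate have overlap bounded independently of $\omega$ so that the summation step does not pick up a factor that blows up as $\omega\to 0$.
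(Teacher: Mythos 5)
Part~(i) matches the paper exactly (pure contraposition), so the discussion below concerns~(ii). Your argument is correct but follows a genuinely different route. The paper fixes a dyadic scale: it chooses the smallest $m$ so that the natural forward shift $(1+\gamma)(L/2^m)^p$ of a sub\-rectangle of side $L/2^m$ is at most half the time length $(1-\gamma)L^p$ of $R^-(\gamma)$, partitions $R^-(\gamma)$ into sub\-rectangles at that fixed scale, and then shifts each one forward in time $N$ times (with $N$ chosen so that $N$ shifts span roughly $\theta L^p$), invoking~(i) once per shift. You instead tune the sub\-rectangle scale $L'=(\omega/(1+\gamma))^{1/p}L$ to the target displacement $\omega L^p$, so that a single application of~(i) at scale $L'$ carries $R^-(\gamma)$ directly to $R^-(\gamma)+(0,\omega L^p)$; you handle $\omega>1+\gamma$ by first iterating the trivial base case $E=R^+(\gamma)$ (one application of~(i) at full scale per step) along translates of $R$. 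Your version is conceptually tidier and avoids the paper's bookkeeping of which shifted rectangles need one extra translation. What the paper's approach buys is the precise dependence of the constant: because $L/2^m$ divides $L$, the spatial sub\-cubes are pairwise disjoint, so the only overlap is in the time direction and the final overlap constant is $4$, independent of $n$. Your scale $L'$ does not generally divide $L$, which forces the overlap $2^{n+1}$ and a constant $C=2^{n+1}\beta^{-(k+1)}$ that depends on $n$ --- a mismatch with the statement, which asserts $C$ depends only on $p,\gamma,\beta,\theta$. This is harmless for the applications but worth noting. Also make explicit that you arrange the covering so every $R_j^-(\gamma)\subset R^-(\gamma)$ (push the last row/column inward); it is this containment, transported forward, that justifies $\sum_j w(R_j^+(\gamma))\leq 2^{n+1}\,w(R^-(\gamma)+(0,\omega L^p))$, and without it the shifted pieces could stick out of the target set.
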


\begin{proof}

$(i)$ This is simply the contraposition of the qualitative measure condition.

$(ii)$
Let $\theta>0$ and $R \subset \mathbb{R}^{n+1}$ be a fixed parabolic rectangle of side length $L$.
Choose $m \in \mathbb{N}$ such that
\begin{equation}
\label{partitionbound}
\frac{(1+\gamma) L^p}{2^{pm}} \leq 
\frac{(1-\gamma) L^p}{2}
< \frac{(1+\gamma) L^p}{2^{p(m-1)}} .
\end{equation}
We partition $R^-(\gamma)$ into subrectangles $R^-_{0,i}(\gamma)$ with spatial side length $L /2^m$ and time length $(1-\gamma)L^p /2^{pm}$ such that the overlap of $\{R^-_{0,i}(\gamma)\}_i$ is bounded by $2$. This can be done by dividing each spatial edge of $R^-(\gamma)$ into $2^m$ equally long pairwise disjoint intervals, and the time interval of $R^-(\gamma)$ into $\lceil 2^{pm} \rceil$ equally long subintervals such that their overlap is bounded by $2$.

Our plan is to shift every rectangle $R^-_{0,i}(\gamma)$ forward in time by multiple times of $(1+\gamma) L^p / 2^{pm}$ until the shifted rectangles are contained in $R^-(\gamma) + (0, \theta L^p )$.
To this end, choose
$N \in \mathbb{N}$ such that
\[
(N-1) \frac{(1+\gamma) L^p}{2^{pm}}  < \theta L^p \leq N \frac{(1+\gamma) L^p}{2^{pm}} .
\]
We first move every rectangle $R^-_{0,i}(\gamma)$ forward in time by $(N-1) (1+\gamma) L^p / 2^{pm}$. Then we shift once more by the distance $(1+\gamma) L^p / 2^{pm}$ those rectangles that are not yet subsets of $R^-(\gamma) + (0, \theta L^p ) $.
Denote so obtained shifted rectangles by $R^-_{N,i}(\gamma)$. 
Observe that the choice of $N$ and \eqref{partitionbound} ensures that all shifted rectangles $R^-_{N,i}(\gamma)$ are contained in $R^-(\gamma) + (0, \theta L^p )$.
By the construction and the bounded overlap of $R^-_{0,i}(\gamma)$, the overlap of $R^-_{N,i}(\gamma)$ is bounded by $4$.
Then we apply $(i)$ for $E=R^+_{0,i}(\gamma)$
and continue applying $(i)$ for shifted rectangles total of $N$ times to obtain
\[
w(R^-_{0,i}(\gamma)) \leq \beta^{-1} w(R^+_{0,i}(\gamma)) \leq \beta^{-N} w(R^-_{N,i}(\gamma)),
\]
where
\[
\beta^{-N} \leq \beta^{-1- 2^{pm} \theta / (1+\gamma) } \leq \beta^{-1 - 2^{p+1} \theta /(1-\gamma)} = C .
\]
Therefore, we conclude that
\begin{align*}
w(R^-(\gamma)) &\leq \sum_i w(R^-_{0,i}(\gamma)) \leq C \sum_i w(R^-_{N,i}(\gamma))\\
&\leq 4 C w(R^-(\gamma) + (0, \theta L^p) )
\end{align*}
by $R^-_{N,i}(\gamma)\subset R^-(\gamma) + (0, \theta L^p )$ and the bounded overlap of $R^-_{N,i}(\gamma)$.
Since $C$ is an increasing function with respect to $\theta $, the claim follows.
\end{proof}

\begin{lemma}
\label{lemma:measureweight-estimate}
Let $0<\gamma<1$ and $w>0$ be a weight.
Assume that there exist $0<\alpha,\beta<1$ 
such that for every parabolic rectangle $R$ and every measurable set $E \subset R^+(\gamma)$ for which $w(E) < \beta w(R^-(\gamma))$ it holds that $\lvert E \rvert < \alpha \lvert R^+(\gamma) \rvert$.
Then there exist $\tau \geq 1$ and $c=c(n,p,\gamma,\alpha,\beta)$ such that for every parabolic rectangle $R=R(x,t,L) \subset \mathbb{R}^{n+1}$ and $\lambda \geq (w_{U^-})^{-1}$ we have
\[
\lvert R^{+}(\gamma) \cap \{ w^{-1} > \lambda \} \rvert \leq c \lambda w( R^{\tau} \cap \{ w^{-1} > (1-\alpha) \lambda \}  ) ,
\]
where $U^- = R^+(\gamma) - (0, \tau (1+\gamma) L^p)$ and
$R^{\tau} = Q(x,L) \times (t+\gamma L^p - \tau(1+\gamma)L^p , t+L^p)$.
Note that $U^- = R^-(\gamma)$ and $R^\tau = R$ for $\tau=1$.
\end{lemma}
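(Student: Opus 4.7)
The plan is a Calder\'on--Zygmund stopping-time argument adapted to the parabolic geometry and to the translated lower parts $U^{-}(\,\cdot\,)$. Denote $E = R^{+}(\gamma) \cap \{w^{-1} > \lambda\}$ and $F = R^{\tau} \cap \{w^{-1} > (1-\alpha)\lambda\}$, so that $E \subset F$. The idea is to cover $E$, up to a null set, by a disjoint family of upper parts $\{P_j^{+}(\gamma)\}$ of parabolic sub-rectangles $P_j \subset R$, chosen by stopping time so that the $w$-average on the translated lower part $U^{-}(P_j) = P_j^{+}(\gamma) - (0, \tau(1+\gamma) l_j^p)$ is comparable to $1/\lambda$.

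First, fix $\tau \geq 1$ large enough (in terms of $\gamma$ and the constants arising in the doubling step) that every $U^{-}(P)$ corresponding to a sub-rectangle $P \subset R$ used below sits inside $R^{\tau}$. Starting from $P = R$, where $w_{U^-(R)} \geq 1/\lambda$ by hypothesis, and refining by parabolic dyadic subdivision, select the maximal sub-rectangles $\{P_j\}$ with $w_{U^{-}(P_j)} < 1/\lambda$. By Lebesgue's differentiation theorem almost every $(y,s) \in E$ lies in some $P_j^{+}(\gamma)$, since $w(y,s) < 1/\lambda$ there. Maximality together with the forward-in-time doubling of Lemma~\ref{lemma:doublingforward}(ii) applied to the parent rectangle yields a two-sided bound $c_0/\lambda \leq w_{U^{-}(P_j)} < 1/\lambda$ with $c_0 = c_0(n,p,\gamma,\alpha,\beta) > 0$, whence
\[
|P_j^{+}(\gamma)| = |U^{-}(P_j)| \leq c_0^{-1}\, \lambda\, w(U^{-}(P_j)).
\]

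The key step is to control $w(U^{-}(P_j))$ by the $w$-mass of $U^{-}(P_j) \cap F$. Split $U^{-}(P_j) = A_j \sqcup B_j$ with $A_j = U^{-}(P_j) \cap F$. Because $w \geq 1/((1-\alpha)\lambda)$ on $B_j$ while $w_{U^{-}(P_j)} < 1/\lambda$, an elementary computation gives $|A_j| \geq \alpha |U^{-}(P_j)|$. Then apply the qualitative measure condition, via its contrapositive, to an auxiliary parabolic rectangle $\tilde{P}_j$ constructed so that $\tilde{P}_j^{+}(\gamma) = U^{-}(P_j)$, and use Lemma~\ref{lemma:doublingforward}(ii) once more to compare the upper and lower parts of $\tilde{P}_j$ (which differ by a forward shift of $(1+\gamma) l_j^p$). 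This yields $w(U^{-}(P_j)) \lesssim w(A_j)$.

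Summing over $j$, using the bounded overlap of $\{U^{-}(P_j)\}$ (a consequence of the parabolic dyadic structure of the stopping family and the uniform backward time shift by $\tau(1+\gamma) l_j^p$), produces
\[
|E| \leq \sum_{j} |P_j^{+}(\gamma)| \lesssim \lambda \sum_{j} w(A_j) \lesssim \lambda\, w(F),
\]
which is the claim. The main obstacle I expect is the weight-measure comparison $w(U^{-}(P_j)) \lesssim w(A_j)$: the qualitative measure condition is stated for subsets of upper parts of parabolic rectangles, so the auxiliary rectangle $\tilde{P}_j$ must be constructed so that $U^{-}(P_j)$ plays the role of its upper part, and the constants coming from Lemma~\ref{lemma:doublingforward}(ii), whose forward-in-time direction does not a priori supply backward comparisons, must be carefully tracked in order to recover a lower bound uniform in $j$. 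A secondary technical point is to verify the bounded overlap of $\{U^{-}(P_j)\}$ across different parabolic-dyadic scales, which in turn constrains the choice of $\tau$.
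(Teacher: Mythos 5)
Your high-level plan (parabolic Calder\'on--Zygmund stopping time, cover $E = R^{+}(\gamma)\cap\{w^{-1}>\lambda\}$ by the stopping upper parts, convert the measure estimate to a $w$-mass estimate on $F$, then sum) is the same skeleton as the paper's. The elementary computation giving $|A_j|>\alpha|U^-(P_j)|$ from $w_{U^-(P_j)}<1/\lambda$ is also correct and matches the paper. However, there is a genuine gap in the step you flag as ``the key step,'' and it is not repairable by the shortcut you propose.

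You assert a two-sided bound $c_0/\lambda \leq w_{U^-(P_j)} < 1/\lambda$ from ``maximality together with the forward-in-time doubling of Lemma~\ref{lemma:doublingforward}(ii) applied to the parent.'' This does not follow. The parent $\hat{P}_j$ has side length $2L_i$, so $U^-(\hat{P}_j)$ is translated backward by $\tau(1+\gamma)(2L_i)^p$, while $U^-(P_j)$ is translated backward by only $\tau(1+\gamma)L_i^p$; these two sets are disjoint and separated in time by an amount of order $L_i^p$, and $|U^-(\hat{P}_j)| = 2^{n+p}|U^-(P_j)|$. Lemma~\ref{lemma:doublingforward}(ii) compares a set to the \emph{same-size} set shifted forward in time; it gives no control of the $w$-mass of the much smaller, time-shifted $U^-(P_j)$ by that of $U^-(\hat{P}_j)$. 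In fact the paper never establishes a lower bound on $w_{U^-(P_j)}$ at all. Instead it uses the parent's non-selection condition $w_{U_{i-1}^-}\geq 1/\lambda$ to get $|U_i^-| = 2^{-n-p}|U_{i-1}^-|\leq 2^{-n-p}\lambda\,w(U_{i-1}^-)$, and then constructs an explicit chain $V_0,\dots,V_{N+1}$ of $N+2$ rectangles with $\alpha^N\leq 2^{-n-p}$, each dilated by $\eta$ (where $\eta^{n+p}=\alpha^{-1}$) and shifted backward in time, to show $w(U_{i-1}^-)\leq c_2\,w\bigl(U_i^- \cap\{w^{-1}>(1-\alpha)\lambda\}\bigr)$ via $N$ applications of Lemma~\ref{lemma:doublingforward}(i) and one of (ii). The parameter $\tau$ in the statement is chosen precisely so that this chain fits; it cannot be taken arbitrary. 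Your sketch defers exactly this chain under the phrase ``apply the qualitative measure condition \dots and use Lemma~\ref{lemma:doublingforward}(ii) once more,'' but a single auxiliary rectangle $\tilde{P}_j$ does not suffice because the scales differ by the fixed factor $2^{n+p}$ while the qualitative measure condition only works at a single scale. Finally, the bounded overlap of $\{U^-(P_j)\}$ is not automatic across the dyadic generations (the backward shifts scale like $L_i^p$ and the rectangles can pile up); the paper instead extracts a pairwise disjoint Vitali subfamily $\{\widetilde U^-_{i,j}\}$ and compares the full family to it via the projection estimates~\eqref{eq:qualiproof_plussubset}--\eqref{eq:qualiproof_minusplussubset}. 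Your proposal would need to supply both the chain and the Vitali-type extraction to close.
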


\begin{proof}
Let $R_0=R(x_0,t_0,L) = Q(x_0,L) \times (t_0-L^p, t_0+L^p)$.
Denote $f=w^{-1}$ and $\dmu = w \dx \dt$.
Let $\tau\geq 1$ to be chosen later.

Denote $S^+_0 = R^+_0(\gamma)$. The time length of $S^+_0$ is $l_t(S^+_0) = (1-\gamma) L^p$.
We partition $S^+_0$ by dividing each spatial edge into $2$ equally long intervals. If
\[
\frac{l_t(S_{0}^+)}{\lceil 2^{p} \rceil} > \frac{(1-\gamma) L^p}{2^{p}},
\]
we divide the time interval of $S^+_0$ into $\lceil 2^{p} \rceil$ equally long intervals. Otherwise, we divide the time interval of $S^+_0$ into $\lfloor 2^{p} \rfloor$ equally long intervals.
We obtain subrectangles $S^+_1$ of $S^+_0$ with spatial side length $L_1 = l_x(S^+_1) = l_x(S^+_0)/2 = L / 2$
and time length either
\[
l_t(S^+_1) = \frac{l_t(S^+_0)}{\lceil 2^{p} \rceil} = \frac{(1-\gamma) L^p}{\lceil 2^{p} \rceil} \quad \text{or} \quad l_t(S^+_1) = \frac{(1-\gamma) L^p}{\lfloor 2^{p} \rfloor} .
\]
For every $S^+_1$, there exists a unique rectangle $R_1$ with spatial side length $L_1 = L / 2$ 
and time length $2L_1^p = 2 L^p / 2^{p}$
such that $R_1$ has the same top as $S^+_1$.
We select those rectangles $S^+_1$ for which
\[
\frac{\lvert U^-_1 \rvert}{w(U^-_1)} = \dashint_{U^-_1} f \dmu > \lambda
\]
and denote the obtained collection by $\{ S^+_{1,j} \}_j$.
If
\[
\frac{\lvert U^-_1 \rvert}{w(U^-_1)} = \dashint_{U^-_1} f \dmu \leq \lambda ,
\]
we subdivide $S^+_1$ in the same manner as above
and select all those subrectangles $S^+_2$ for which
\[
\frac{\lvert U^-_2 \rvert}{w(U^-_2)} = \dashint_{U^-_2} f \dmu > \lambda
\]
to obtain family $\{ S^+_{2,j} \}_j$.
We continue this selection process recursively.
At the $i$th step, we partition unselected rectangles $S^+_{i-1}$ by dividing each spatial side into $2$ equally long intervals. If
\begin{equation}
\label{eq:qualiproof_eq1}
\frac{l_t(S_{i-1}^+)}{\lceil 2^{p} \rceil} > \frac{(1-\gamma) L^p}{2^{pi}},
\end{equation}
we divide the time interval of $S^+_{i-1}$ into $\lceil 2^{p} \rceil$ equally long intervals. 
Otherwise, if
\begin{equation}
\label{eq:qualiproof_eq2}
\frac{l_t(S_{i-1}^+)}{\lceil 2^{p} \rceil} \leq \frac{(1-\gamma) L^p}{2^{pi}},
\end{equation}
we divide the time interval of $S^+_{i-1}$ into $\lfloor 2^{p} \rfloor$ equally long intervals.
We obtain subrectangles $S^+_i$. For every $S^+_i$, there exists a unique rectangle $R_i$ with spatial side length $L_i = L / 2^{i}$
and time length $2L_i^p = 2 L^p / 2^{pi}$
such that $R_i$ has the same top as $S^+_i$.
Select those $S^+_i$ for which 
\[
\frac{\lvert U^-_i \rvert}{w(U^-_i)} = \dashint_{U^-_i} f \dmu > \lambda
\]
and denote the obtained collection by $\{ S^+_{i,j} \}_j$.
If
\[
\frac{\lvert U^-_i \rvert}{w(U^-_i)} = \dashint_{U^-_i} f \dmu \leq \lambda ,
\]
we continue the selection process in $S^+_i$.
In this manner we obtain a collection $\{S^+_{i,j} \}_{i,j}$ of pairwise disjoint rectangles.

Observe that if \eqref{eq:qualiproof_eq1} holds, then we have
\[
l_t(S_i^+) = \frac{l_t(S^+_{i-1})}{\lceil 2^{p} \rceil} \geq \frac{(1-\gamma) L^p}{2^{pi}}.
\]
On the other hand, if \eqref{eq:qualiproof_eq2} holds, then
\[
l_t(S_i^+) = \frac{l_t(S^+_{i-1})}{\lfloor 2^{p} \rfloor} \geq \frac{l_t(S^+_{i-1})}{2^{p}} \geq \dots \geq \frac{(1-\gamma)  L^p}{2^{pi}} .
\]
This gives a lower bound 
\[
l_t(S_i^+) \geq \frac{ (1-\gamma) L^p}{2^{pi}}
\]
for every $S_i^+$.

Suppose that \eqref{eq:qualiproof_eq2} is satisfied at the $i$th step.
Then we have an upper bound for the time length of $S_i^+$, since
\begin{align*}
l_t(S^+_i) = \frac{l_t(S_{i-1}^+)}{\lfloor 2^{p} \rfloor} \leq \frac{\lceil 2^{p} \rceil}{\lfloor 2^{p} \rfloor} \frac{(1-\gamma) L^p}{2^{pi}} \leq \biggl( 1+ \frac{1}{\lfloor 2^{p} \rfloor} \biggr) \frac{(1-\gamma) L^p}{2^{pi}} .
\end{align*}
On the other hand, if \eqref{eq:qualiproof_eq1} is satisfied, then
\[
l_t(S^+_i) = \frac{l_t(S_{i-1}^+)}{\lceil 2^{p} \rceil} \leq \frac{l_t(S_{i-1}^+)}{2^{p}}.
\]
In this case, \eqref{eq:qualiproof_eq2} has been satisfied at an earlier step $i'$ with $i'< i$.
We obtain
\begin{align*}
l_t(S^+_i) \leq \frac{l_t(S_{i-1}^+)}{ 2^{p}} \leq \dots \leq \frac{l_t(S_{i'}^+)}{ 2^{p(i-i')}} \leq \biggl( 1+ \frac{1}{\lfloor 2^{p} \rfloor} \biggr) \frac{(1-\gamma) L^p}{ 2^{pi}}
\end{align*}
by using the upper bound for $S_{i'}^+$.
Thus, we have
\[
\frac{(1-\gamma) L^p}{2^{pi}} \leq l_t(S^+_i) \leq \biggl( 1+ \frac{1}{\lfloor 2^{p} \rfloor} \biggr) \frac{(1-\gamma) L^p}{2^{pi}}
\]
for every $S^+_i$.
For each $S^+_i$, let $0 \leq \varepsilon_i \leq 1 /\lfloor 2^{p} \rfloor$ such that
\[
l_t(S^+_i) = (1+ \varepsilon_i) l_t(R^+_i(\gamma)) = (1+ \varepsilon_i) \frac{(1-\gamma) L^p}{2^{pi}} .
\]
Note that $\varepsilon_0 = 0$.

We have a collection $\{ S^+_{i,j} \}_{i,j}$ of pairwise disjoint rectangles. 
However, the rectangles in the corresponding collection $\{ U^-_{i,j} \}_{i,j}$ may overlap. 
Thus, we replace it by a subfamily $\{ \widetilde{U}^-_{i,j} \}_{i,j}$ of pairwise disjoint rectangles, which is constructed in the following way.
At the first step, choose $\{ U^-_{1,j} \}_{j}$ and denote it by $\{ \widetilde{U}^-_{1,j} \}_j$. 
Then consider the collection $\{ U^-_{2,j} \}_{j}$ where each $U^-_{2,j}$ either intersects some $\widetilde{U}^-_{1,j}$ or does not intersect any $\widetilde{U}^-_{1,j}$. 
Select the rectangles $U^-_{2,j}$ that do not intersect any $\widetilde{U}^-_{1,j}$, and denote the obtained collection by $\{ \widetilde{U}^-_{2,j} \}_j$.
At the $i$th step, choose those $U^-_{i,j}$ that do not intersect any previously selected $\widetilde{U}^-_{i',j}$, $i' < i$.
Hence, we obtain a collection $\{ \widetilde{U}^-_{i,j} \}_{i,j}$ of pairwise disjoint rectangles.
Observe that for every $U^-_{i,j}$ there exists $\widetilde{U}^-_{i',j}$ with $i' < i$ such that
\begin{equation}
\label{eq:qualiproof_plussubset}
\text{pr}_x(U^-_{i,j}) \subset \text{pr}_x(\widetilde{U}^-_{i',j}) \quad \text{and} \quad \text{pr}_t(U^-_{i,j}) \subset 3 \text{pr}_t(\widetilde{U}^-_{i',j}) .
\end{equation}
Here pr$_x$ denotes the projection to $\mathbb R^n$ and pr$_t$ denotes the projection to the time axis.
Note that $S^+_{i,j}$ is spatially contained in $U^-_{i,j}$, that is, $\text{pr}_x S^+_{i,j}\subset \text{pr}_x U^-_{i,j}$.
In the time direction, we have
\begin{equation}
\label{eq:qualiproof_minusplussubset}
\text{pr}_t(S^+_{i,j}) \subset \text{pr}_t(R^\tau_{i,j}) 
\subset \biggl( 2\tau \frac{1 + \gamma}{1-\gamma} +1 \biggr) \text{pr}_t(U^-_{i,j}) ,
\end{equation}
since
\[
\biggl( 2\tau \frac{1 + \gamma}{1-\gamma} + 2 \biggr) \frac{l_t(U^-_{i,j})}{2} = \frac{(1-\gamma)L^p}{2^{pi}} + \frac{\tau(1+\gamma)L^p}{2^{pi}} = l_t(R^{\tau}_{i,j}) ,
\]
where recall that $R^{\tau}_{i,j} = Q(x_{R_{i,j}} ,L_i) \times (t_{R_{i,j}} +\gamma L_i^p - \tau(1+\gamma)L_i^p , t_{R_{i,j}} +L_i^p)$.
Therefore, by~\eqref{eq:qualiproof_plussubset} and~\eqref{eq:qualiproof_minusplussubset}, it holds that
\begin{equation}
\label{eq:qualiproof_start}
\Big\lvert \bigcup_{i,j} S^+_{i,j} \Big\rvert \leq c_1 \sum_{i,j} \lvert \widetilde{U}^-_{i,j} \rvert 
\quad\text{with}\quad
c_1 = 3 \biggl( 2\tau \frac{1 + \gamma}{1-\gamma} +1 \biggr).
\end{equation}

For the rest of the proof and to simplify the notation, let $U^-_i = \widetilde{U}^-_{i,j}$ and $U^-_{i-1} = \widetilde{U}^-_{i-1,j'}$ be fixed, where $U^-_i$ was obtained by subdividing the previous $U^-_{i-1}$ for which $\lvert U^-_{i-1} \rvert / w(U^-_{i-1}) \leq \lambda$. 
Choose $N\in\mathbb N$ and $\eta>1$ such that
\[
\alpha^N \leq 2^{-n-p} < \alpha^{N-1} \quad\text{and}\quad \eta^{n+p} = \alpha^{-1} . 
\]
Then we have
\begin{align*}
\lvert U^-_{i} \rvert = \frac{1}{2^{n+p}} \lvert U^-_{i-1} \rvert \geq \alpha^N \lvert U^-_{i-1} \rvert \quad\text{and}\quad \eta^{N-1} < 2 \leq \eta^N .
\end{align*}
We construct a chain of rectangles from $U^-_i$ to $U^-_{i-1}$.
Define the first element of the chain by
\[
V_0 = 
U^-_i - (1+\gamma) L_i^p 
= Q(x_{R_{i}},L_i) \times (a_0 , a_0 + (1-\gamma) L_i^p) ,
\]
where $a_0$ denotes the time coordinate of the bottom of $V_0$.
For the rest of the chain, we consider separately the spatial variable and the time variable.
We start with the spatial variable.
Let
\[
Q_k = \eta^k Q(x_{R_{i}},L_i) + \frac{\eta^{k}-1}{\eta^{N}-1}(x_{R_{i-1}} - x_{R_{i}})
\]
for $k\in\{0,\dots,N\}$.
We have
\[
\lvert x_{R_{i-1},m} - x_{R_{i},m} \rvert \leq \eta^N L_i - L_i = (\eta^N-1) L_i
\]
for every $m\in \{1,\dots,n\}$, since $pr_x(R_{i}) \subset pr_x(R_{i-1})$.
Here $x_{R_{i},m}$ denotes the $m$th coordinate of $x_{R_{i}}$.
Then $Q_{k-1} \subset Q_{k}$ for every $k\in\{1,\dots,N\}$ since
\begin{align*}
&\lvert x_{Q_{k-1},m} - x_{Q_{k},m} \rvert \\
&\qquad= \Bigl\lvert x_{R_{i},m} + \frac{\eta^{k-1}-1}{\eta^{N}-1}(x_{R_{i-1},m} - x_{R_{i},m})  - x_{R_{i},m} - \frac{\eta^{k}-1}{\eta^{N}-1}(x_{R_{i-1},m} - x_{R_{i},m}) \Bigr\rvert \\
&\qquad= \frac{\eta^{k}-\eta^{k-1}}{\eta^N -1} \lvert x_{R_{i-1},m} - x_{R_{i},m} \rvert 
\leq (\eta^{k}-\eta^{k-1}) L_i = l(Q_{k}) - l(Q_{k-1})
\end{align*}
for every $m\in \{1,\dots,n\}$.
Observe that $Q_0 = pr_x(U^-_{i})$ and $Q_N = pr_x(U^-_{i-1})$.

We move on to the time variable where the chain is constructed as follows.
Let
\[
I_k = (a_k,b_k) = (a_{k-1} - \eta^{pk} (1+\gamma) L_i^p ,  a_{k-1} + \eta^{pk} (1-\gamma) L_i^p - \eta^{pk} (1+\gamma) L_i^p  )
\]
for $k\in \{1,\dots,N\}$.
We define the elements of the chain by $V_k = Q_{k} \times I_k$
for every $k\in \{1,\dots,N\}$. Observe that $\lvert V_{k-1} \rvert = \alpha \lvert V_{k} \vert$.
The time distance between the bottom of $U^-_i$ and the bottom of $V_N$ is
\begin{align*}
\sum_{k=0}^{N} \eta^{pk} (1+\gamma) L_i^p = \frac{\eta^{p(N+1)}-1}{\eta^p -1} (1+\gamma) L_i^p = \frac{\eta^{p(N+1)}-1}{\eta^p -1} \frac{(1+\gamma) L^p}{2^{pi}} .
\end{align*}
Let $\tau\geq 1$ such that
\begin{align*}
\tau (1+\gamma) L^p = \frac{\tau (1+\gamma) L^p}{2^p} + \frac{\eta^{p(N+1)}-1}{\eta^p -1} \frac{(1+\gamma) L^p}{2^{p}}
+ \frac{(1-\gamma) L^p}{\lfloor 2^p \rfloor}  ,
\end{align*}
that is,
\begin{equation}
\label{eq:qualiproof_tau}
\tau = \frac{2^p}{2^p-1} \frac{\eta^{p(N+1)}-1}{\eta^p -1}
+ \frac{2^p}{2^p-1} \frac{1}{\lfloor 2^p \rfloor} \frac{1-\gamma}{1+\gamma}   .
\end{equation}
With this choice, we must add one more rectangle into the chain for the chain to end at $U^-_{i-1}$.
Let
\begin{align*}
V_{N+1} &= V_N - (0, b_i l_t(V_N) ) = V_N - (0, b_i \eta^{Np} (1-\gamma) L_{i}^p ) \\
&= V_N - \biggl( 0, b_i \frac{ \eta^{Np}(1-\gamma) L^p}{2^{pi}} \biggr) , 
\end{align*}
where $b_i$ is chosen
such that the bottom of $V_{N+1}$ coincides with the bottom of $U^-_{i-1}$. Then $V_{N+1}$ contains $U^-_{i-1}$.
It must hold that
\begin{equation}
\label{eq:qualiproof_rectangle-equation}
l_t(R^\tau_{i-1}) - l_t(S^+_{i-1}) = l_t(R^\tau_i) - l_t(S^+_i) + d_i - K_i l_t(S^+_i) .
\end{equation}
Here $K_i$ measures the time length between the bottom of $S^+_{i}$ and the bottom of $S^+_{i-1}$, and $K_i \in \{0,\dots, \lceil 2^p \rceil -1\}$ or $K_i \in \{0,\dots, \lfloor 2^p \rfloor -1\}$ depending on the partition level $i$.
If the bottom of $S^+_{i}$ intersects the bottom of $S^+_{i-1}$, then $K_i=0$.
Moreover, $d_i$ is time distance from the bottom of $U^-_i$ to the bottom of $U^-_{i-1}$ or equivalently $V_{N+1}$, that is,
\[
d_i = \frac{\eta^{p(N+1)}-1}{\eta^p -1} \frac{(1+\gamma) L^p}{2^{pi}} + b_i \frac{\eta^{Np} (1-\gamma) L^p}{2^{pi}} .
\]
Equation~\eqref{eq:qualiproof_rectangle-equation} is equivalent with
\begin{align*}
&\frac{\tau (1+\gamma) L^p}{2^{p(i-1)}} - \varepsilon_{i-1} \frac{(1-\gamma) L^p}{2^{p(i-1)}} = \frac{\tau (1+\gamma) L^p}{2^{pi}} - \varepsilon_i \frac{(1-\gamma) L^p}{2^{pi}} \\
&\qquad + \frac{\eta^{p(N+1)}-1}{\eta^p -1} \frac{(1+\gamma) L^p}{2^{pi}} + b_i \frac{(1-\gamma) L^p}{2^{p(i-1)}}   - K_i (1+ \varepsilon_i) \frac{(1-\gamma) L^p}{2^{pi}} .
\end{align*}
By the choice of $\tau$, 
we deduce that
\begin{align*}
&b_i \frac{ \eta^{Np}(1-\gamma) L^p}{2^{pi}}   = 
\frac{(1-\gamma) L^p}{2^{p(i-1)} \lfloor 2^p \rfloor}   + \varepsilon_i \frac{(1-\gamma) L^p}{2^{pi}} \\
&\qquad - \varepsilon_{i-1} \frac{(1-\gamma) L^p}{2^{p(i-1)}}  + K_i (1+ \varepsilon_i) \frac{(1-\gamma) L^p}{2^{pi}} ,
\end{align*}
from which we obtain
\begin{align*}
0 &\leq b_i = \frac{1}{\eta^{Np}} \biggl( \frac{2^p}{\lfloor 2^p \rfloor}  + \varepsilon_i - 2^p \varepsilon_{i-1} + K_i (1+ \varepsilon_j) \biggr) \\
&\leq \frac{1}{2^p} \biggl( \frac{2^p}{\lfloor 2^p \rfloor} + \frac{1}{\lfloor 2^p \rfloor} + ( \lceil 2^p \rceil -1 ) \biggl( 1+ \frac{1}{\lfloor 2^p \rfloor} \biggr) \biggr)
\leq 3 .
\end{align*}
With this choice of $b_i$, we have $U^-_{i-1} \subset V_{N+1}$.

It holds that
\begin{align*}
\lvert U^-_{i} \cap \{ (1-\alpha) w \geq w_{U^-_{i}} \} \rvert \leq \frac{1-\alpha}{w_{U^-_{i}}} w(U^-_{i}) = (1-\alpha) \lvert U^-_{i} \rvert ,
\end{align*}
and thus
\begin{align*}
\lvert U^-_{i} \cap \{ (1-\alpha) w < w_{U^-_{i}} \} \rvert \geq \alpha \lvert U^-_{i} \rvert = \alpha \lvert V_0 \rvert .
\end{align*}
Hence, we may apply Lemma~\ref{lemma:doublingforward}~$(i)$ to get
\[
w(U^-_{i} \cap \{ (1-\alpha) w < w_{U^-_{i}} \}) \geq \beta w(V_0) .
\]
By the definition of $V_k$, we can apply Lemma~\ref{lemma:doublingforward}~$(i)$ for each pair of $V_{k-1}, V_k$, $k\in\{1,\dots,N\}$, and Lemma~\ref{lemma:doublingforward}~$(ii)$ for $V_{N}, V_{N+1}$ with $\theta=3\geq\sup b_i$ to obtain
\[
w(V_0) \geq \beta w(V_1) \geq \beta^N w(V_N) \geq \beta^N \frac{1}{C} w(V_{N+1}) .
\]
By combining the previous estimates, $U^-_{i-1} \subset V_{N+1}$ and $w_{U^-_{i}} < \lambda^{-1} $, we conclude that
\begin{equation}
\label{eq:qualiproof_chainestimate}
\begin{split}
w(U^-_{i-1}) &\leq w(V_{N+1}) \leq C \beta^{-N-1} w(U^-_{i} \cap \{ (1-\alpha) w < w_{U^-_{i}} \}) \\
&\leq c_2 w(U^-_{i} \cap \{ w^{-1} > (1-\alpha) \lambda \}) ,
\end{split}
\end{equation}
where
\[
C \beta^{-N-1} \leq 4 \beta^{-1 - 2^{p+1} 3 /(1-\gamma) -2+(n+p)\log_\alpha 2 } = c_2 .
\]

If $(x,t) \in S^+_0 \setminus \bigcup_{i,j} S^+_{i,j}$, then there exists a sequence of subrectangles $S^+_l$ containing $(x,t)$ such that 
\[
\frac{\lvert U^-_l \rvert}{w(U^-_l)} = \dashint_{U^-_l} f \dmu \leq \lambda
\]
and $\lvert S^+_l \rvert \to 0$ as $l \to \infty$.
The Lebesgue differentiation theorem~\cite[Lemma~2.3]{KinnunenMyyryYang2022}
implies that $w^{-1} = f(x,t) \leq \lambda$
for almost every $(x,t) \in S^+_0 \setminus \bigcup_{i,j} S^+_{i,j}$.
It follows that
\[
S^+_0 \cap \{ w^{-1} > \lambda \} \subset \bigcup_{i,j} S^+_{i,j}
\]
up to a set of measure zero.
Using this together with~\eqref{eq:qualiproof_start} and~\eqref{eq:qualiproof_chainestimate}, we obtain
\begin{align*}
\lvert S^+_0 \cap \{ w^{-1} > \lambda \} \rvert &\leq 
c_1 \sum_{i,j} \lvert \widetilde{U}^-_{i,j} \rvert = \frac{c_1}{2^{n+p} } \sum_{i,j} \lvert \widetilde{U}^-_{i-1,j} \rvert \leq \frac{c_1}{2^{n+p}} \lambda \sum_{i,j} w(\widetilde{U}^-_{i-1,j}) \\
&\leq \frac{c_1}{2^{n+p}} c_2 \lambda \sum_{i,j} w(\widetilde{U}^-_{i,j} \cap \{ w^{-1} > (1-\alpha) \lambda \}) \\
&\leq c \lambda w(R_0^{\tau} \cap \{ w^{-1} > (1-\alpha) \lambda \}) ,
\end{align*}
where $c = c_1 c_2 /2^{n+p}$.
This completes the proof.
\end{proof}

The following theorem shows that the qualitative measure condition in Theorem~\ref{thm:Ainfty}~$(iv)$ implies the parabolic Muckenhoupt condition in Theorem~\ref{thm:Ainfty}~$(i)$.

\begin{theorem}
Let $0<\gamma<1$ and $w$ be a weight in $\mathbb{R}^{n+1}$. 
Assume that there exist $0<\alpha, \beta<1$ such that for every parabolic rectangle $R$ and every measurable set $E \subset R^+(\gamma)$ for which $w(E) < \beta w(R^-(\gamma))$ it holds that $\lvert E \rvert < \alpha \lvert R^+(\gamma) \rvert$.
Then $w\in A^+_q(\gamma)$ for some $q>1$.
\end{theorem}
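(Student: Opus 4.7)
The plan is to transform Lemma~\ref{lemma:measureweight-estimate}, which already encodes the qualitative hypothesis, into an $L^{\varepsilon}$ reverse H\"older estimate for $w^{-1}$ on $R_0^+(\gamma)$ with $w_{U^-}$ playing the role of the preaverage, and then to invoke Theorem~\ref{thm:timelagchange} to pass from this translated form to the standard $A^+_q(\gamma)$ condition. Fix any parabolic rectangle $R_0 = R(x_0,t_0,L)$. Lemma~\ref{lemma:measureweight-estimate}, applied with the $\alpha,\beta$ provided by the hypothesis, gives a constant $\tau\ge 1$, independent of $R_0$, such that
\[
\lvert R_0^+(\gamma)\cap\{w^{-1}>\lambda\}\rvert \le c\lambda\, w\bigl(R_0^\tau\cap\{w^{-1}>(1-\alpha)\lambda\}\bigr),\qquad \lambda\ge 1/w_{U^-},
\]
where $U^- = R_0^+(\gamma) - (0, \tau(1+\gamma)L^p)$ and $R_0^\tau$ is the enlarged rectangle of the lemma.

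First I would multiply this inequality by $\varepsilon\lambda^{\varepsilon-1}$ and integrate over $\lambda>0$. By Cavalieri, the left-hand side becomes $\int_{R_0^+(\gamma)} w^{-\varepsilon}$. The range $\lambda\in(0,1/w_{U^-})$ contributes at most $\lvert R_0^+(\gamma)\rvert\, w_{U^-}^{-\varepsilon}$ from the trivial bound $\lvert E_\lambda\rvert\le\lvert R_0^+(\gamma)\rvert$. For $\lambda\ge 1/w_{U^-}$, writing $w(F_\lambda) = \int_{R_0^\tau} w\,\chi_{\{w<1/((1-\alpha)\lambda)\}}$, Fubini and a direct evaluation of the inner integral in $\lambda$ yield
\[
\int_{R_0^+(\gamma)} w^{-\varepsilon} \le \lvert R_0^+(\gamma)\rvert\, w_{U^-}^{-\varepsilon} + \frac{c\,\varepsilon}{(\varepsilon+1)(1-\alpha)^{\varepsilon+1}}\int_{R_0^\tau} w^{-\varepsilon}.
\]
The crucial feature is the $O(\varepsilon)$ coefficient in front of the last integral. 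Since $R_0^+(\gamma)\subset R_0^\tau$, choosing $\varepsilon$ small enough that the coefficient is below $1/2$ lets me absorb the $R_0^+(\gamma)$-portion of $\int_{R_0^\tau} w^{-\varepsilon}$ into the left-hand side, leaving a past remainder over $R_0^\tau\setminus R_0^+(\gamma)$. This remainder is covered by a bounded (in $\tau,\gamma$) number of backward translates of $R_0^+(\gamma)$; applying the same distribution estimate on each translate and using the forward doubling of Lemma~\ref{lemma:doublingforward}\,(ii) to relate the successive $w_{U_k^-}$ to $w_{U^-}$, the iteration closes because the $O(\varepsilon)$ coefficient dominates the combinatorial branching once $\varepsilon$ is small.

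The outcome is a translated reverse H\"older inequality $\dashint_{R_0^+(\gamma)} w^{-\varepsilon}\le C\, w_{U^-}^{-\varepsilon}$, or equivalently
\[
w_{U^-}\Bigl(\dashint_{R_0^+(\gamma)} w^{-\varepsilon}\Bigr)^{1/\varepsilon}\le C^{1/\varepsilon}.
\]
Because $U^-$ is exactly the translated lower part $S^-(\gamma)$ appearing in Theorem~\ref{thm:timelagchange} when one takes $\alpha=\gamma$ (allowed since $0<\gamma<1$) and the same $\tau$, this is precisely the equivalent characterization of $A^+_q(\gamma)$ provided there, with $q=1+1/\varepsilon$; hence $w\in A^+_q(\gamma)$ for this $q$. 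The main obstacle is the iteration in the second paragraph: deep backward translates give $w$-averages that are not a priori bounded below, so a naive iteration could diverge. The resolution is to couple the smallness $O(\varepsilon)$ of the absorbing coefficient with the forward doubling from Lemma~\ref{lemma:doublingforward}\,(ii) so that the geometric sum over translates converges for $\varepsilon$ sufficiently small.
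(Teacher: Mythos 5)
Your overall strategy -- Cavalieri on the distribution estimate from Lemma~\ref{lemma:measureweight-estimate}, absorbing the $O(\varepsilon)$ term, iterating over the remaining past region, and finishing with Theorem~\ref{thm:timelagchange} -- is the right one and is exactly what the paper does. But the second paragraph, which is where all the work lies, has a genuine gap as written.

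You say the past remainder $R_0^\tau\setminus R_0^+(\gamma)$ ``is covered by a bounded number of backward translates of $R_0^+(\gamma)$,'' and that ``the forward doubling of Lemma~\ref{lemma:doublingforward}~(ii)'' relates the successive $w_{U_k^-}$ back to $w_{U^-}$. If these translates are of the \emph{same size} as $R_0^+(\gamma)$, the recursion does not close. At depth $k$ the relevant $U_k^-$ sits at time $\sim -k\tau L^p$, which is arbitrarily far in the past. To bound the terms $|U_k^-|\,w_{U_k^-}^{-\varepsilon}$ you need a \emph{lower} bound on $w_{U_k^-}$ in terms of $w_{U^-}$ or $w_{U^{\sigma,-}_0}$. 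But Lemma~\ref{lemma:doublingforward}~(ii), and more generally the one-sided hypothesis, give only $w(\text{earlier})\le C\,w(\text{later})$; they give an \emph{upper} bound on past averages and say nothing about how small they might be. No choice of $\varepsilon$ rescues this, because the obstruction is not a large geometric ratio but a prefactor that is genuinely unbounded below. The paper avoids this by subdividing each successive remainder into a bounded number $M$ of \emph{half-size} pieces $R^+_{i,j}(\gamma)$, which keeps the entire collection $\{U^-_{i,j}\}$ inside a \emph{bounded} time window $\sim \sigma L^p$ behind $R_0^+(\gamma)$. A single deep translate $U^{\sigma,-}_0$ is then chosen further back than all of them, and a chain (built from Lemma~\ref{lemma:doublingforward}~(i) iterated $N(i)\sim i$ times, plus (ii) once) runs \emph{backward in time} from each $U^-_{i,j}$ to $U^{\sigma,-}_0$, yielding $w(U^{\sigma,-}_0)\le c_2\,\xi^i\,w(U^-_{i,j})$. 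That inequality points in the direction your argument needs, precisely because $U^-_{i,j}$ is \emph{later} than $U^{\sigma,-}_0$.

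Two further technical points worth noting. First, you attribute the chaining to part~(ii) of Lemma~\ref{lemma:doublingforward} alone; in fact the main work is done by iterating part~(i), and part~(ii) is used only once at the end of the chain to handle the non-integer overshoot $b_i$. Second, to justify that the tail term of the iteration vanishes, the paper first proves the estimate under the additional hypothesis that $w^{-1}$ is bounded, and then removes this by truncating $w$ from below and applying Fatou; without some such reduction the tail $(c_1\varepsilon)^{N+1}M^N\int_{R^{\sigma,-}_0(\gamma)}w^{-\varepsilon}$ need not tend to zero.
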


\begin{proof}
We prove the claim first for weights satisfying that $w^{-1}$ is bounded. 
Let $R \subset \mathbb R^{n+1}$ be a parabolic rectangle.
Let $\varepsilon>0$ to be chosen later.
Denote $B = (w_{U^-})^{-1}$ and $\rho = 1-\alpha$.
Applying Cavalieri's principle with Lemma~\ref{lemma:measureweight-estimate}, we obtain
\begin{align*}
\int_{R^+(\gamma)} w^{-\varepsilon} &= \varepsilon \int_0^\infty \lambda^{\varepsilon-1} \lvert R^{+}(\gamma) \cap \{ w^{-1} > \lambda \} \rvert \dla \\
&= \varepsilon \int_0^{B} \lambda^{\varepsilon-1} \lvert R^{+}(\gamma) \cap \{ w^{-1} > \lambda \} \rvert \dla + \varepsilon \int_{B}^\infty \lambda^{\varepsilon-1} \lvert R^{+}(\gamma) \cap \{ w^{-1} > \lambda \} \rvert \dla \\
&\leq \lvert R^{+}(\gamma) \rvert \varepsilon \int_0^{B} \lambda^{\varepsilon-1} \dla + c \varepsilon \int_{B}^\infty \lambda^{\varepsilon} w( R^{\tau} \cap \{ w^{-1} > \rho \lambda \}  ) \dla \\
&\leq \lvert R^{+}(\gamma) \rvert B^\varepsilon + c \rho^{-1-\varepsilon} \varepsilon \int_0^\infty \lambda^{\varepsilon} w( R^{\tau} \cap \{ w^{-1} > \lambda \}  ) \dla \\
&\leq \lvert U^- \rvert (w_{U^-})^{-\varepsilon} + c \rho^{-1-\varepsilon} \frac{\varepsilon}{1+\varepsilon} \int_{R^{\tau}} w^{-\varepsilon} ,
\end{align*}
where $c$ is the constant from Lemma~\ref{lemma:measureweight-estimate}.
By choosing $\varepsilon>0$ to be small enough, we can absorb the integral over $R^+(\gamma)$ of the second term to the left-hand side to get
\begin{align*}
\biggl( 1- \frac{c}{\rho^{1+\varepsilon}} \frac{\varepsilon}{1+\varepsilon} \biggr) \int_{R^+(\gamma)} w^{-\varepsilon} \leq \lvert U^- \rvert (w_{U^-})^{-\varepsilon} + \frac{c}{\rho^{1+\varepsilon}} \frac{\varepsilon}{1+\varepsilon} \int_{R^{\tau}\setminus R^+(\gamma)} w^{-\varepsilon} .
\end{align*}
Denote $R^{\tau,-} = R^{\tau} \setminus R^+(\gamma)$.
Hence, we have
\begin{equation}
\label{eq:qualiproof_cavalieri_iteration}
\int_{R^+(\gamma)} w^{-\varepsilon} \leq c_0 \lvert U^- \rvert (w_{U^-})^{-\varepsilon} + c_1 \varepsilon \int_{R^{\tau,-}} w^{-\varepsilon} ,
\end{equation}
where
\[
c_0 = \frac{1+\varepsilon }{1-(c \rho^{-1-\varepsilon} -1) \varepsilon} \quad \text{and} \quad c_1 = \frac{c \rho^{-1-\varepsilon} }{1-(c \rho^{-1-\varepsilon} -1) \varepsilon} .
\]

Fix $R_0 = Q(x_0,L) \times (t_0 - L^p, t_0 + L^p) \subset \mathbb R^{n+1}$.
We cover $R^{\tau,-}_{0}(\gamma)$ by 
\[
M = 2^n \biggl\lceil \frac{\tau(1+\gamma)}{(1-\gamma)/2^p} \biggr\rceil = 2^{n} \biggl\lceil 2^p \tau \frac{1+\gamma}{1-\gamma} \biggr\rceil
\]
rectangles $R^+_{1,j}(\gamma)$ with spatial side length $ L_1 = L/2$ and time length $ (1-\gamma)L_1^p = (1-\gamma) L^p / 2^p$. This can be done by dividing each spatial edge of $R^{\tau,-}_{0}(\gamma)$ into two equally long pairwise disjoint intervals, and the time interval of $R^{\tau,-}_{0}(\gamma)$ into $\lceil 2^p \tau (1+\gamma) /(1-\gamma) \rceil$ equally long intervals such that their overlap is bounded by $2$.
Thus, the overlap of $R^+_{1,j}(\gamma)$ is bounded by $2$.
Then consider $R^{\tau,-}_{1,j}(\gamma)$ and cover it in the same way as before by $M$ rectangles $R^+_{2,j}(\gamma)$ with spatial side length $ L_2 = L/2^{2}$ and time length $ (1-\gamma)L_2^p = (1-\gamma)L^p / 2^{2p}$.
At the $i$th step, cover $R^{\tau,-}_{i-1,j}(\gamma)$ by $M$ rectangles $R^+_{i,j}(\gamma)$ with spatial side length $ L_i = L/2^{i}$ and time length $ (1-\gamma)L_i^p = (1-\gamma)L^p / 2^{pi}$ such that their overlap is bounded by $2$ for fixed $R^{\tau,-}_{i-1,j}(\gamma)$.
We observe that the time distance between the bottom of $U^-_{i,j}$ and the bottom of $R_0^+(\gamma)$ is at most
\begin{equation}
\label{eq:qualiproof_maxdist-lowerparts}
\sum_{i=0}^\infty l_t(R^{\tau,-}_{i,j}(\gamma)) = \sum_{i=0}^\infty \frac{\tau(1+\gamma)L^p}{2^{pi}} = \frac{2^p}{2^p -1} \tau(1+\gamma)L^p .
\end{equation}

As in the proof of Lemma~\ref{lemma:measureweight-estimate}, we construct a chain from each $U^-_{i,j}$ to 
$U^{\sigma,-}_{0} = R^+(\gamma) - (0, \sigma (1+\gamma) L^p )$, where $\sigma\geq\tau$ is chosen later.
Fix $U^-_{i} = U^-_{i,j}$.
Choose $N = N(i) \in\mathbb N$ and $\eta>1$ such that
\[
\alpha^N \leq 2^{-(n+p)i} < \alpha^{N-1} \quad\text{and}\quad \eta^{n+p} = \alpha^{-1} . 
\]
Then we have
\begin{align*}
\lvert U^-_{i} \rvert = \frac{1}{2^{(n+p)i}} \lvert U^{\sigma,-}_{0} \rvert \geq \alpha^{N} \lvert U^{\sigma,-}_{0} \rvert \quad\text{and}\quad \eta^{N-1} < 2^i \leq \eta^N .
\end{align*}
Let $0\leq\delta=\delta(i)<1$ such that $\eta^{N-\delta} = 2^i $.
We construct a chain of rectangles from $U^-_i$ to $U^{\sigma,-}_{0}$.
Define its elements by
\begin{align*}
V_0 = U^-_i = Q(x_{R_{i}},L_i) \times (a_0 , a_0 + (1-\gamma) L_i^p) \quad\text{and}\quad V_k = Q_{k} \times I_k
\end{align*}
for every $k\in \{1,\dots,N\}$, where
\begin{align*}
Q_k &= \eta^k Q(x_{R_{i}},L_i) + \frac{\eta^{k}-1}{\eta^{N}-1}(x_{R_{0}} - x_{R_{i}}) , \\
I_k &= (a_k,b_k) = (a_{k-1} - \eta^{kp} (1+\gamma) L_i^p ,  a_{k-1} + \eta^{kp} (1-\gamma) L_i^p - \eta^{kp} (1+\gamma) L_i^p  ) .
\end{align*}
Observe that $Q_0 = pr_x(U^-_{i})$, $Q_{N} = pr_x(U^-_{0,\sigma})$ and $\lvert V_{k-1} \rvert = \alpha \lvert V_{k} \vert$.
The time distance between the bottom of $V_0$ and the bottom of $V_N$ is
\begin{align*}
\sum_{k=1}^{N} \eta^{pk} (1+\gamma) L_i^p &= \eta^p \frac{\eta^{pN}-1}{\eta^p -1} \frac{(1+\gamma) L^p}{2^{pi}} = \frac{\eta^{p}}{\eta^p -1} \frac{ 2^{pi} \eta^{\delta p} -1}{2^{pi}} (1+\gamma) L^p \\
&= \frac{\eta^{p}}{\eta^p -1} \biggl( \eta^{\delta p} - \frac{1}{2^{pi}} \biggr) (1+\gamma) L^p .
\end{align*}
Hence, the maximum possible distance between the bottom of $V_0$ and the bottom of $V_N$ is
\begin{equation}
\label{eq:qualiproof_maxdist-chain}
\sum_{k=1}^{\infty} \eta^{pk} (1+\gamma) L_i^p = \frac{\eta^{p+\delta p}}{\eta^p -1} (1+\gamma) L^p \leq \frac{\eta^{2p}}{\eta^p -1} (1+\gamma) L^p .
\end{equation}
By combining~\eqref{eq:qualiproof_maxdist-lowerparts} and~\eqref{eq:qualiproof_maxdist-chain}, 
we obtain an upper bound for the time length
from the bottom of $R_0^+(\gamma)$ to the bottom of $V_N$. Based on this, we fix $U^{\sigma,-}_{0}$ by choosing $\sigma$ such that
\[
\sigma (1+\gamma)L^p = \frac{2^p}{2^p -1} \tau(1+\gamma)L^p +  \frac{\eta^{2p}}{\eta^p -1} (1+\gamma) L^p ,
\]
that is,
\[
\sigma = \frac{2^p\tau}{2^p -1} + \frac{\eta^{2p}}{\eta^p -1} .
\]

We add one more rectangle $V_{N+1}$ into the chain so that the chain would end at $U^{\sigma,-}_{0}$.
Let
\begin{align*}
V_{N+1} &= V_N - (0, b_i l_t(V_N) ) = V_N - (0, b_i \eta^{Np} (1-\gamma) L_{i}^p )\\
&=V_N - \biggl(0, b_i \frac{ \eta^{Np}(1-\gamma) L^p}{2^{pi}} \biggr) , 
\end{align*}
where $b_i$ is chosen such that the bottom of $V_{N+1}$ intersects with the bottom of $U^{\sigma,-}_{0}$.
Then $U^{\sigma,-}_{0}$ is contained in $V_{N+1}$.
Next we find an upper bound for $b_i$.
By recalling the definition of $\tau$~\eqref{eq:qualiproof_tau} from the proof of Lemma~\ref{lemma:measureweight-estimate}, 
we observe that
the shortest possible time length from the bottom of $R^+_0(\gamma)$ to the bottom $V_N$ is
\begin{align*}
\sum_{i=1}^\infty \frac{(1-\gamma)L^p}{2^{pi}} + \sum_{k=1}^{\infty} \eta^{pk} (1+\gamma) L_i^p &= \frac{1-\gamma}{2^p -1} L^p + \frac{\eta^{p+\delta p}}{\eta^p -1} (1+\gamma) L^p \\
&\geq \frac{1-\gamma}{2^p -1} L^p + \frac{\eta^{p}}{\eta^p -1} (1+\gamma) L^p .
\end{align*}
Therefore, the time distance between the bottom of $V_N$ and the bottom of $U^{\sigma,-}_{0}$ is less than
\begin{align*}
&\sigma (1+\gamma)L^p - \frac{1-\gamma}{2^p -1} L^p - \frac{\eta^{p}}{\eta^p -1} (1+\gamma) L^p \\
&\qquad= \frac{2^p}{2^p -1} \tau(1+\gamma)L^p - \frac{1-\gamma}{2^p -1} L^p + \eta^p (1+\gamma) L^p .
\end{align*}
By this,
we obtain an upper bound for $b_i$
\begin{align*}
b_i (1-\gamma) L^p 
&\leq b_i \frac{ \eta^{Np}(1-\gamma) L^p}{2^{pi}}\\
&\leq \frac{2^p}{2^p -1} \tau(1+\gamma)L^p - \frac{1-\gamma}{2^p -1} L^p + \eta^p (1+\gamma) L^p ,
\end{align*}
that is,
\[
b_i \leq \frac{2^p \tau }{2^p -1} \frac{1+\gamma}{1-\gamma} - \frac{1}{2^p -1} + \eta^p \frac{1+\gamma}{1-\gamma} = \theta .
\]

By the definition of $V_k$, we can apply Lemma~\ref{lemma:doublingforward}~$(i)$ for each pair of $V_{k-1}, V_k$, $k\in\{1,\dots,N\}$, and Lemma~\ref{lemma:doublingforward}~$(ii)$ for $V_{N}, V_{N+1}$ with $\theta\geq\sup b_i$ to get
\[
w(V_0) \geq \beta w(V_1) \geq \beta^N w(V_N) \geq \beta^N \frac{1}{C} w(V_{N+1}) ,
\]
where $C$ is the constant from Lemma~\ref{lemma:doublingforward}~$(ii)$.
Note that $\beta^{-N} \leq \beta^{-1+(n+p)i\log_\alpha 2 }$.
We conclude that
\begin{equation}
\label{eq:qualiproof_chainestimate2}
w(U^{\sigma,-}_{0}) \leq w(V_{N+1}) \leq C \beta^{-N} w(V_0) \leq c_2 \xi^i w(U^-_i) ,
\end{equation}
where $c_2 = C \beta^{-1}$ and $\xi = \beta^{(n+p)\log_\alpha 2 }$.

We iterate~\eqref{eq:qualiproof_cavalieri_iteration} to obtain
\begin{align*}
\int_{R^+_0(\gamma)} w^{-\varepsilon} &\leq c_0 \lvert U^-_0 \rvert (w_{U^-_0})^{-\varepsilon} + c_1 \varepsilon \int_{R^{\tau,-}_{0}} w^{-\varepsilon} \\
&\leq c_0 \lvert U^-_0 \rvert (w_{U^-_0})^{-\varepsilon} + c_1 \varepsilon \sum_{j=1}^M \int_{R^+_{1,j}(\gamma)} w^{-\varepsilon} \\
&\leq c_0 \lvert U^-_0 \rvert (w_{U^-_0})^{-\varepsilon} + c_1 \varepsilon \sum_{j=1}^M \biggl( c_0 \lvert U^-_{1,j} \rvert (w_{U^-_{1,j}})^{-\varepsilon} + c_1 \varepsilon \int_{R^{\tau,-}_{1,j}(\gamma)} w^{-\varepsilon} \biggr) \\
&= c_0 \lvert U^-_0 \rvert (w_{U^-_0})^{-\varepsilon} + c_0  c_1 \varepsilon \sum_{j=1}^M \lvert U^-_{1,j} \rvert (w_{U^-_{1,j}})^{-\varepsilon} + (c_1 \varepsilon)^2 \sum_{j=1}^M \int_{R^{\tau,-}_{1,j}(\gamma)} w^{-\varepsilon} \\
&\leq c_0 \sum_{i=0}^N \biggl( (c_1 \varepsilon)^{i} \sum_{j=1}^{M^i} \lvert U^-_{i,j} \rvert (w_{U^-_{i,j}})^{-\varepsilon} \biggr) + (c_1 \varepsilon)^{N+1} \sum_{j=1}^{M^N} \int_{R^{\tau,-}_{N,j}(\gamma)} w^{-\varepsilon} \\
&\leq c_0 \sum_{i=0}^N \biggl( (c_1 \varepsilon)^{i} \sum_{j=1}^{M^i} \lvert U^-_{i,j} \rvert (w_{U^-_{i,j}})^{-\varepsilon} \biggr) + (c_1 \varepsilon)^{N+1} M^N \int_{R^{\sigma,-}_{0}(\gamma)} w^{-\varepsilon} \\
&= I + II .
\end{align*}
We observe that $II$ tends to zero if $\varepsilon < \tfrac{1}{c_1 M}$ as $N \to \infty$ since $w^{-\varepsilon}$ is bounded by the initial assumption.
For the inner sum of the first term $I$, we apply~\eqref{eq:qualiproof_chainestimate2} to get
\begin{align*}
\sum_{j=1}^{M^i} \lvert U^-_{i,j} \rvert (w_{U^-_{i,j}})^{-\varepsilon} &= \sum_{j=1}^{M^i} \lvert U^-_{i,j} \rvert^{1+\varepsilon} w(U^-_{i,j})^{-\varepsilon} \leq \sum_{j=1}^{M^i} 2^{-(n+p)(1+\varepsilon) i} \lvert U^{\sigma,-}_{0} \rvert^{1+\varepsilon} w(U^-_{i,j})^{-\varepsilon} \\
&\leq \sum_{j=1}^{M^i} 2^{-(n+p)(1+\varepsilon) i} \lvert U^{\sigma,-}_{0} \rvert^{1+\varepsilon} c_2^\varepsilon \xi^{\varepsilon i} w(U^{\sigma,-}_{0})^{-\varepsilon} \\
&= 2^{-(n+p)(1+\varepsilon) i}  c_2^\varepsilon \xi^{\varepsilon i} M^i \lvert U^{\sigma,-}_{0} \rvert (w_{U^{\sigma,-}_{0}})^{-\varepsilon} .
\end{align*}
Thus, it follows that
\begin{align*}
I &\leq c_0 \sum_{i=0}^N (c_1 \varepsilon)^{i} 2^{-(n+p)(1+\varepsilon) i} c_2^\varepsilon \xi^{\varepsilon i} M^i \lvert U^{\sigma,-}_{0} \rvert (w_{U^{\sigma,-}_{0}})^{-\varepsilon} \\
&\leq c_0 c_2^\varepsilon \lvert U^{\sigma,-}_{0} \rvert (w_{U^{\sigma,-}_{0}})^{-\varepsilon} \sum_{i=0}^N (c_1 \varepsilon)^{i} 2^{-(n+p)(1+\varepsilon) i}  \xi^{\varepsilon i} M^i .
\end{align*}
We estimate the sum by
\begin{align*}
\sum_{i=0}^N (c_1 \varepsilon)^{i} 2^{-(n+p)(1+\varepsilon) i}  \xi^{\varepsilon i} M^i &= \sum_{i=0}^N \bigl( c_1 \varepsilon 2^{-(n+p)(1+\varepsilon)}  \xi^{\varepsilon} M \bigr)^i \\
&\leq \frac{1}{1- c_1 \varepsilon 2^{-(n+p)(1+\varepsilon)}  \xi^{\varepsilon} M} ,
\end{align*}
whenever $\varepsilon$ is small enough, for example, $\varepsilon < 2^{n+p} /(c_1 \xi M)$.
Then it holds that
\begin{align*}
\int_{R^+_0(\gamma)} w^{-\varepsilon}
&\leq \frac{c_0 c_2^\varepsilon}{1- c_1 \varepsilon 2^{-(n+p)(1+\varepsilon)}  \xi^{\varepsilon} M} \lvert U^{\sigma,-}_{0} \rvert (w_{U^{\sigma,-}_{0}})^{-\varepsilon}
\end{align*}
for small enough $\varepsilon$.
We conclude that
\begin{equation}
\label{eq:qualiproof_muckenhoupt-estimate}
\dashint_{U^{\sigma,-}_{0}} w \biggl( \dashint_{R^+_0(\gamma)} w^{-\varepsilon} \biggr)^\frac{1}{\varepsilon} \leq c_3 ,
\end{equation}
where
\[
c_3^\varepsilon = \frac{c_0 c_2^\varepsilon}{1- c_1 \varepsilon 2^{-(n+p)(1+\varepsilon)}  \xi^{\varepsilon} M} .
\]

We have shown that~\eqref{eq:qualiproof_muckenhoupt-estimate} holds for weights satisfying that $w^{-1}$ is bounded. 
For general $w$, we consider truncations $\max\{w,1/k\}$, $ k \in \mathbb N $, and apply~\eqref{eq:qualiproof_muckenhoupt-estimate} with Fatou's lemma as $k\to\infty$.
Hence,~\eqref{eq:qualiproof_muckenhoupt-estimate} holds for general weights as well.
Let $q = 1+1/\varepsilon$.
Applying Theorem~\ref{thm:timelagchange}, we conclude that $w \in A^+_q(\gamma)$.
This completes the proof.
\end{proof}

\subsection{Sublevel measure condition}

We show $(iv)\Leftrightarrow (v)$ in Theorem~\ref{thm:Ainfty}.

\begin{theorem}
Let $0\leq\gamma<1$ and $w$ be a weight.
The following conditions are equivalent.
\begin{enumerate}[(i)]
\item There exist $0<\alpha,\beta <1$ such that for every parabolic rectangle $R$ we have
\[
\lvert R^+(\gamma) \cap \{ w < \beta w_{R^-(\gamma)} \} \rvert < \alpha \lvert R^+(\gamma) \rvert .
\]
\item There exist $0<\alpha, \beta<1$ such that for every parabolic rectangle $R$ and every measurable set $E \subset R^+(\gamma)$ for which $w(E) < \beta w(R^-(\gamma))$ it holds that $\lvert E \rvert < \alpha \lvert R^+(\gamma) \rvert$.
\end{enumerate}
\end{theorem}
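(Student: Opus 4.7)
The plan is to prove both implications by elementary Chebyshev-style arguments, exploiting the identity $\lvert R^+(\gamma) \rvert = \lvert R^-(\gamma) \rvert$, which gives the convenient relation $w_{R^-(\gamma)}\, \lvert R^+(\gamma)\rvert = w(R^-(\gamma))$. This single identity is what lets us pass between measure-fractional conditions and weight-fractional conditions.

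For the implication (ii) $\Rightarrow$ (i), I would take the candidate set $E = R^+(\gamma) \cap \{ w < \beta w_{R^-(\gamma)} \}$, where $\beta$ is the constant from (ii). Using $w < \beta w_{R^-(\gamma)}$ pointwise on $E$ together with $\lvert E \rvert \leq \lvert R^+(\gamma)\rvert$ and the identity above, a one-line estimate shows $w(E) < \beta w(R^-(\gamma))$. Hypothesis (ii) then yields $\lvert E \rvert < \alpha \lvert R^+(\gamma)\rvert$, which is precisely (i).

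For the implication (i) $\Rightarrow$ (ii), given a measurable $E \subset R^+(\gamma)$ satisfying $w(E) < \beta' w(R^-(\gamma))$ for a parameter $\beta'$ to be chosen, I would split
\[
E = \bigl(E \cap \{ w < \beta w_{R^-(\gamma)} \}\bigr) \cup \bigl(E \cap \{ w \geq \beta w_{R^-(\gamma)} \}\bigr),
\]
where $\alpha,\beta$ are the constants supplied by (i). The first piece is controlled directly by (i): its measure is strictly less than $\alpha \lvert R^+(\gamma)\rvert$. The second piece is controlled by Chebyshev: its measure is at most $w(E)/(\beta w_{R^-(\gamma)})$, and using $w_{R^-(\gamma)}\lvert R^+(\gamma)\rvert = w(R^-(\gamma))$ together with the hypothesis $w(E) < \beta' w(R^-(\gamma))$ bounds it by $(\beta'/\beta)\lvert R^+(\gamma)\rvert$. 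Adding the two estimates gives $\lvert E \rvert < (\alpha + \beta'/\beta)\lvert R^+(\gamma)\rvert$.

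It remains to choose parameters so that the resulting coefficient is strictly less than $1$. Setting, for example, $\beta' = \beta(1-\alpha)/2$ and $\alpha' = (1+\alpha)/2$ we obtain $\alpha + \beta'/\beta = \alpha' < 1$, so (ii) holds with these constants. There is no real obstacle here; the only thing to double-check is the measure identity $\lvert R^+(\gamma)\rvert = \lvert R^-(\gamma)\rvert$, which is immediate from Definition~\ref{def_parrect} since both halves are $Q(x,L)$ times a time interval of length $(1-\gamma)L^p$.
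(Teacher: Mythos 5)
Your proof is correct and follows essentially the same route as the paper: the same choice of test set $E = R^+(\gamma) \cap \{ w < \beta w_{R^-(\gamma)} \}$ for (ii)$\Rightarrow$(i), and the same two-piece decomposition with a Chebyshev estimate on the second piece for (i)$\Rightarrow$(ii), ending with the same kind of parameter shrinking ($\beta' < (1-\alpha)\beta$). Nothing to add.
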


\begin{proof}
We first show that $(i)$ implies $(ii)$.
Let $E \subset R^+(\gamma)$ be a measurable set such that $w(E) < \beta' w(R^-(\gamma))$ where $\beta' < (1-\alpha)\beta$.
It follows that
\begin{align*}
\lvert E \rvert &= \lvert E \cap \{w < \beta w_{R^-(\gamma)}\} \rvert + \lvert E \cap \{w \geq \beta w_{R^-(\gamma)}\} \rvert\\
&\leq \alpha \lvert R^+(\gamma) \rvert +\frac{1}{\beta w_{R^-(\gamma)}} w(E) \\
&= \biggl( \alpha + \frac{1}{\beta} \frac{w(E)}{w(R^-(\gamma))} \biggr) \lvert R^+(\gamma) \rvert \\
&< \biggl( \alpha + \frac{\beta'}{\beta} \biggr) \lvert R^+(\gamma) \rvert 
= \alpha' \lvert R^+(\gamma) \rvert ,
\end{align*}
where $\alpha' = \alpha + \frac{\beta'}{\beta} < 1$.

We prove the other direction.
Let $E = R^+(\gamma) \cap \{ w < \beta w_{R^-(\gamma)} \}$. Then
\begin{align*}
w(E) < \beta w_{R^-(\gamma)} \lvert E \rvert \leq \beta w(R^-(\gamma)),
\end{align*}
and thus $(ii)$ implies $\lvert E \vert < \alpha \lvert R^+(\gamma) \rvert$.
\end{proof}

\subsection{Gurov--Reshetnyak condition}

We show $(v)\Leftrightarrow (vi)$ in Theorem~\ref{thm:Ainfty}.

\begin{theorem} 
Let $R\subset\mathbb{R}^{n+1}$ be a parabolic rectangle,
$0\leq\gamma<1$ and $w$ be a weight.
\begin{enumerate}[(i)]
\item 
Assume that there exists $0 < \varepsilon < 1$ such that
\[
\dashint_{R^+(\gamma)} (w - w_{R^-(\gamma)})^- \leq \varepsilon w_{R^-(\gamma)} .
\]
Then for $\varepsilon < \lambda < 1 $ we have
\[
\lvert R^+(\gamma) \cap \{ w<(1- \tfrac{\varepsilon}{\lambda} ) w_{R^-(\gamma)} \} \rvert < \lambda \lvert R^+(\gamma) \rvert .
\]
\item
Assume that there exist $0<\alpha,\beta <1$ such that
\[
\lvert R^+(\gamma) \cap \{ w < \beta w_{R^-(\gamma)} \} \rvert < (1-\alpha) \lvert R^+(\gamma) \rvert .
\]
Then we have
\[
\dashint_{R^+(\gamma)} (w - w_{R^-(\gamma)})^- \leq (1-\alpha \beta) w_{R^-(\gamma)} .
\]
\end{enumerate}
\end{theorem}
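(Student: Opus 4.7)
Both statements are straightforward decomposition arguments around the level set $\{w < c\, w_{R^-(\gamma)}\}$, so the plan is essentially to apply Chebyshev's inequality in one direction and a two-piece splitting of the integral in the other. No chaining or geometric construction should be needed.

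For part (i), write $a = w_{R^-(\gamma)}$ and set $E_\lambda = R^+(\gamma) \cap \{w < (1-\varepsilon/\lambda)a\}$. On $E_\lambda$ we have the strict pointwise lower bound $(w-a)^- = a - w > (\varepsilon/\lambda)a$. Chebyshev's inequality applied to the nonnegative function $(w-a)^-$ combined with the hypothesis then gives
\[
\frac{\varepsilon}{\lambda}\, a\, \lvert E_\lambda\rvert \;\leq\; \int_{E_\lambda} (w-a)^- \;\leq\; \int_{R^+(\gamma)}(w-a)^- \;\leq\; \varepsilon\, a\, \lvert R^+(\gamma)\rvert,
\]
so that $\lvert E_\lambda\rvert \leq \lambda \lvert R^+(\gamma)\rvert$. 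The strict inequality follows because equality in the Chebyshev step would force $(w-a)^- = (\varepsilon/\lambda)a$ a.e.\ on $E_\lambda$, contradicting the strict pointwise lower bound unless $\lvert E_\lambda\rvert = 0$, in which case the claim is trivial.

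For part (ii), again with $a = w_{R^-(\gamma)}$, let $E = R^+(\gamma) \cap \{w < \beta a\}$, so by hypothesis $\lvert E\rvert < (1-\alpha)\lvert R^+(\gamma)\rvert$. Since $(w-a)^-$ is supported on $\{w < a\}$, split
\[
\int_{R^+(\gamma)}(w-a)^- = \int_{E}(a-w) + \int_{(R^+(\gamma)\cap\{w<a\})\setminus E}(a-w).
\]
On $E$ bound $a - w \leq a$, and on the complementary piece use $w \geq \beta a$ to get $a - w \leq (1-\beta)a$. Summing and dividing by $\lvert R^+(\gamma)\rvert$ gives
\[
\dashint_{R^+(\gamma)}(w-a)^- \;\leq\; a\Bigl((1-\beta) + \beta\,\tfrac{\lvert E\rvert}{\lvert R^+(\gamma)\rvert}\Bigr) \;\leq\; a\bigl((1-\beta) + \beta(1-\alpha)\bigr) \;=\; (1-\alpha\beta)\, a.
\]

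The only thing that requires a moment of care is the strict-versus-nonstrict inequality in (i); otherwise both parts reduce to one-line inequalities. I do not expect any real obstacle: the proof is essentially bookkeeping, with the only arithmetic check being that $(1-\beta) + \beta(1-\alpha) = 1 - \alpha\beta$ in (ii).
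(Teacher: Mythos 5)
Your proof is correct and takes essentially the same approach as the paper: in (i) both use Chebyshev's inequality on the level set $E_\lambda$, and in (ii) both split the integral of $(w - w_{R^-(\gamma)})^-$ over the set $\{w < \beta w_{R^-(\gamma)}\}$ and its complement in $R^+(\gamma)$, yielding the bound $(1-\beta)+\beta(1-\alpha)=1-\alpha\beta$. Your remark on why the inequality in (i) is strict is a small improvement: the paper's chain in fact only concludes $\lvert E\rvert \leq \lambda\lvert R^+(\gamma)\rvert$ and does not address the strictness claimed in the statement, whereas you correctly observe that Chebyshev must be a strict inequality on $E_\lambda$ whenever $\lvert E_\lambda\rvert > 0$ because the pointwise bound $(w - w_{R^-(\gamma)})^- > (\varepsilon/\lambda)\,w_{R^-(\gamma)}$ is strict there.
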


\begin{proof}
We first show that $(i)$ holds.
Let $E = R^+(\gamma) \cap \{ w < (1- \tfrac{\varepsilon}{\lambda} ) w_{R^-(\gamma)} \}$.
We obtain
\begin{align*}
\frac{\varepsilon}{\lambda} w_{R^-(\gamma)} &\leq \inf_{E} (w_{R^-(\gamma)} - w) \leq \dashint_{E} (w_{R^-(\gamma)} - w)\\
& \leq \frac{1}{\lvert E \rvert} \int_{R^+(\gamma) \cap \{ w \leq w_{R^-(\gamma)} \}} (w_{R^-(\gamma)} - w) \\
& = \frac{1}{\lvert E \rvert} \int_{R^+(\gamma)} (w_{R^-(\gamma)} - w)^+ \leq \frac{\lvert R^+(\gamma) \rvert}{\lvert E \rvert} \varepsilon w_{R^-(\gamma)} ,
\end{align*}
which implies that $\lvert E \rvert \leq \lambda \lvert R^+(\gamma) \rvert $.

For the other direction, we set $E = R^+(\gamma) \cap \{ w < \beta w_{R^-(\gamma)} \}$
and 
\[
E^c = R^+(\gamma) \setminus E = R^+(\gamma) \cap \{ w \geq \beta w_{R^-(\gamma)} \}.
\]
Then $\lvert E \rvert \leq (1-\alpha) \lvert R^+(\gamma) \rvert$ and it holds that
\begin{align*}
&\dashint_{R^+(\gamma)} (w - w_{R^-(\gamma)})^- = \frac{1}{\lvert R^+(\gamma) \rvert} \int_{ R^+(\gamma) \cap \{ w < w_{R^-(\gamma)} \}} (w_{R^-(\gamma)} - w) \\
&\qquad= \frac{1}{\lvert R^+(\gamma) \rvert} \int_{R^+(\gamma) \cap \{ \beta w_{R^-(\gamma)} \leq w < w_{R^-(\gamma)} \}} (w_{R^-(\gamma)} - w) + \frac{1}{\lvert R^+(\gamma) \rvert} \int_{E} (w_{R^-(\gamma)} - w) \\
&\qquad\leq \frac{1}{\lvert R^+(\gamma) \rvert} (1-\beta) w_{R^-(\gamma)} \lvert E^c \rvert  + \frac{1}{\lvert R^+(\gamma) \rvert} w_{R^-(\gamma)} \lvert E \rvert \\
&\qquad= \frac{1}{\lvert R^+(\gamma) \rvert} w_{R^-(\gamma)} ( (1-\beta) \lvert E^c \rvert + \lvert E \rvert ) \\
&\qquad= \frac{1}{\lvert R^+(\gamma) \rvert} w_{R^-(\gamma)} ( (1-\beta) \lvert R^+(\gamma) \rvert + \beta \lvert E \rvert ) \\
&\qquad\leq \frac{1}{\lvert R^+(\gamma) \rvert} w_{R^-(\gamma)} ( (1-\beta) \lvert R^+(\gamma) \rvert + \beta (1-\alpha) \lvert R^+(\gamma) \rvert ) \\
&\qquad= (1-\alpha \beta) w_{R^-(\gamma)} .
\end{align*}
This completes the proof.
\end{proof}

\section{Parabolic reverse H\"older inequality}
\label{sec:RHI}

In this section, we show that parabolic Muckenhoupt weights satisfy the parabolic reverse H\"older inequality.
The lemma below is the main ingredient in a new proof of \cite[Theorem 5.2]{kinnunenSaariParabolicWeighted}.

\begin{lemma}
\label{lemma:weightmeasure-estimate}
Let $0 < \gamma < 1$.
Assume that there exist $0<\alpha<\tfrac{1}{2}$, $0<\beta<1$ such that for every parabolic rectangle $R$ and every measurable set $E \subset R^+(\gamma)$ for which $w(E) < \beta w(R^-(\gamma))$ it holds that $\lvert E \rvert < \alpha \lvert R^+(\gamma) \rvert$.
Then
for every parabolic rectangle $R \subset \mathbb{R}^{n+1}$ and
$\lambda \geq w_{R^-}$ we have
\[
w( R^{-} \cap \{ w > \lambda \}  )  \leq c \lambda \lvert R \cap \{ w > \beta \lambda \} \rvert ,
\]
where $c$ depends on $n,p,\gamma$ and $\alpha$.

\end{lemma}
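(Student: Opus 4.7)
The plan is to execute a parabolic Calderón--Zygmund-type decomposition of $R$ at height $\lambda$ and then apply the qualitative measure condition on each stopping rectangle.

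First, starting from $R$ itself and iteratively subdividing according to the spatial-halving/time-$\lceil 2^p\rceil$-or-$\lfloor 2^p\rfloor$-split scheme used in the proof of Lemma~\ref{lemma:measureweight-estimate} (so that parabolic scaling is preserved), I would construct a pairwise disjoint family of parabolic subrectangles $\{P_i\}_i \subset R$ such that
\[
\lambda < \dashint_{P_i^-(\gamma)} w \leq c_0 \lambda, \qquad c_0 = c_0(n,p,\gamma),
\]
and $R^- \cap \{w>\lambda\} \subset \bigcup_i P_i^-(\gamma)$ up to a null set. The initial assumption $\lambda\geq w_{R^-}$ (together with $w_{R^-(\gamma)}\leq \lambda/(1-\gamma)$) seeds the stopping time at the root. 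The upper bound $c_0\lambda$ comes from comparing $P_i^-(\gamma)$ to a mildly enlarged ``parent'' rectangle $\hat P_i$ in the partition whose $(\cdot)^-(\gamma)$ part contains $P_i^-(\gamma)$ and satisfies $\dashint_{\hat P_i^-(\gamma)} w \leq \lambda$; the passage from $P_i^-(\gamma)$ to $\hat P_i^-(\gamma)$ inflates the average only by the dimension-dependent ratio $|\hat P_i^-(\gamma)|/|P_i^-(\gamma)|$. The covering claim follows from the parabolic Lebesgue differentiation theorem~\cite[Lemma~2.3]{KinnunenMyyryYang2022}, applied to shrinking sequences of parabolic rectangles whose lower parts contain a given point of $R^-\cap\{w>\lambda\}$, exactly in the spirit of the final paragraph of the proof of Lemma~\ref{lemma:measureweight-estimate}.

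Next, for each selected $P_i$, set $E_i = P_i^+(\gamma) \cap \{w \leq \beta\lambda\}$. The stopping condition gives $w(P_i^-(\gamma)) > \lambda|P_i^-(\gamma)| = \lambda|P_i^+(\gamma)|$, so if $|E_i| \geq \alpha|P_i^+(\gamma)|$ held, the contrapositive of the hypothesis would yield
\[
\beta\lambda|P_i^+(\gamma)| \geq \beta\lambda|E_i| \geq w(E_i) \geq \beta w(P_i^-(\gamma)) > \beta\lambda|P_i^+(\gamma)|,
\]
a contradiction. Hence $|P_i^+(\gamma)| \leq (1-\alpha)^{-1}|P_i \cap \{w>\beta\lambda\}|$, and summing over $i$ using $|P_i^-(\gamma)|=|P_i^+(\gamma)|$ and the pairwise disjointness of $\{P_i\}$ inside $R$ yields
\[
w(R^- \cap \{w>\lambda\}) \leq \sum_i w(P_i^-(\gamma)) \leq c_0\lambda\sum_i|P_i^+(\gamma)| \leq \frac{c_0}{1-\alpha}\lambda|R \cap \{w>\beta\lambda\}|,
\]
which is the desired bound with $c = c_0/(1-\alpha)$.

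The main obstacle is the first step, since a strict dyadic subdivision does not automatically give $P_i^-(\gamma) \subset \hat P_i^-(\gamma)$ (parabolic scaling with $p>1$ works against ordinary centered dilations), and a point of $R^- \cap \{w>\lambda\}$ need not lie in the lower part of the unique dyadic descendant of $R$ containing it. Both issues are handled in the proof of Lemma~\ref{lemma:measureweight-estimate}: the ``parent'' in the stopping time is taken to be a mildly enlarged and upward-shifted rectangle whose $(\cdot)^-(\gamma)$ part contains $P_i^-(\gamma)$, and the covering step relies on the parabolic Lebesgue differentiation theorem applied to translated rectangles; both modifications carry over unchanged here. The assumption $\alpha<\tfrac12$ is not essential to the contradiction in Step~2, but likely serves to keep the ensuing constants in a range suitable for the reverse H\"older application in Section~\ref{sec:RHI}.
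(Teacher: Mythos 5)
Your overall strategy (parabolic Calder\'on--Zygmund stopping-time at height $\lambda$, then the qualitative measure condition on the stopping rectangles) is the right one and matches the paper's at the level of ideas. However, there is a genuine gap in the summation step. You assert that the stopping family $\{P_i\}_i$ can be taken pairwise disjoint inside $R$, and the final inequality $\sum_i \lvert P_i \cap \{w>\beta\lambda\}\rvert \le \lvert R\cap\{w>\beta\lambda\}\rvert$ hinges on exactly that. But this is not available from the decomposition. A forward-in-time parabolic CZ stopping produces pairwise disjoint \emph{lower cells} $S^-_{i,j}$ (subsets of $R^-$), and these are what the Lebesgue differentiation argument covers $R^-\cap\{w>\lambda\}$ with; the associated \emph{upper parts} $S^+_{i,j}$, obtained by translating each cell forward in time by $(1+\gamma)L_i^p$, translate by \emph{level-dependent} amounts and can overlap badly across levels. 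Disjointness of the lower cells does not transfer to the upper parts or to the full rectangles $P_i$. The paper handles this with a dedicated Vitali-type step: it extracts a pairwise-disjoint subfamily $\{\widetilde S^+_{i,j}\}$ of the upper parts, establishes $\text{pr}_x(S^+_{i,j})\subset\text{pr}_x(\widetilde S^+_{i',j'})$ and $\text{pr}_t(S^+_{i,j})\subset 3\,\text{pr}_t(\widetilde S^+_{i',j'})$, and derives $\sum_{i}\lvert S^-_i\rvert \le c_1\sum_j\lvert\widetilde S^+_j\rvert$ before applying the qualitative measure condition. Without this extraction your chain of inequalities breaks at the last step.

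Two smaller points. First, because $(L/2)^p$ does not divide $L^p$ for noninteger $p$, the cells $S^\pm_i$ of the subdivision are only \emph{comparable} to the $\gamma$-parts of true parabolic rectangles (time length between $\frac12(1-\gamma)(L/2^i)^p$ and $(1-\gamma)(L/2^i)^p$). Consequently, applying the hypothesis with $E=\widetilde S^+_j\cap\{w<\beta w_{\widetilde S^-_j}\}\subset \widetilde R^+_j(\gamma)$ yields $\lvert E\rvert < \alpha\lvert \widetilde R^+_j(\gamma)\rvert\le 2\alpha\lvert\widetilde S^+_j\rvert$, so the restriction $\alpha<\frac12$ is genuinely used, not merely a convenience for later applications as you suggest. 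Second, the paper's stopping condition is on the cells $S^-_i$ themselves (not on $P_i^-(\gamma)$ of an associated parabolic rectangle), which makes the parent comparison $\dashint_{S^-_i}w\le (|S^-_{i-1}|/|S^-_i|)\lambda$ immediate from the nested partition of $R^-$, sidestepping the issue you flag about $\hat P_i^-(\gamma)\supset P_i^-(\gamma)$. Your closing paragraph correctly identifies the obstacles but resolving them is the substance of the proof, not a routine carry-over.
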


\begin{proof}

Let $R_0=R(x_0,t_0,L) = Q(x_0,L) \times (t_0-L^p, t_0+L^p)$.
By considering the function $w(x+x_0,t+t_0)$, we may assume that the center of $R_0$ is the origin, that is, $R_0 = Q(0,L) \times (-L^p, L^p)$.

Denote $S^-_0 = R^-_0 = Q(0,L) \times (-L^p,0) $. The time length of $S^-_0$ is $l_t(S^-_0) = L^p$.
We partition $S^-_0$ by dividing each spatial edge $[-L,L]$ into $2$ equally long intervals and the time interval of $S^-_0$ into $\lceil 2^{p}/(1-\gamma) \rceil$ equally long intervals.
We obtain subrectangles $S^-_1$ of $S^-_0$ with spatial side length 
$l_x(S^-_1)=l_x(S^-_0)/2 = L / 2$ and time length 
\[
l_t(S^-_1)= \frac{L^p}{\lceil 2^{p} / (1-\gamma) \rceil}.
\]
For every $S^-_1$, there exists a unique rectangle $R_1$ with spatial side length $l_x = L / 2$ and time length $l_t = 2 L^p / 2^{p}$
such that $R_1$ has the same bottom as $S^-_1$.
We select those rectangles $S^-_1$ for which 
\[
\lambda < \dashint_{S^-_1} w \dx\dt = \frac{w(S^-_1)}{\lvert S^-_1 \rvert} 
\]
and denote the obtained collection by $\{ S^-_{1,j} \}_j$.
If
\[
\lambda \geq \dashint_{S^-_1} w \dx\dt ,
\]
we subdivide $S^-_1$ by dividing each spatial edge $[-L,L]$ into $2$ equally long intervals.
If
\[
\frac{l_t(S_{1}^-)}{\lfloor 2^{p} \rfloor} \leq \frac{(1-\gamma)L^p}{2^{2p}},
\]
we divide the time interval of $S^-_1$ into $\lfloor 2^{p} \rfloor$ equally long intervals. 
Otherwise, we divide the time interval of $S^-_1$ into $\lceil 2^{p} \rceil$ equally long intervals.
We obtain subrectangles $S^-_2$ of  $S^-_1$ with spatial side length 
$l_x(S^-_2)=l_x(S^-_1)/2$ and time length either 
\[
l_t(S^-_2)=\frac{l_t(S^-_1)}{\lfloor 2^{p} \rfloor} 
\quad\text{or}\quad
l_t(S^-_2)=\frac{l_t(S^-_1)}{\lceil 2^{p} \rceil}.
\]
Select all those subrectangles $S^-_2$ for which
\[
\lambda < \dashint_{S^-_2} w \dx\dt = \frac{w(S^-_2)}{\lvert S^-_2 \rvert} 
\]
to obtain family $\{ S^-_{2,j} \}_j$.
We continue this selection process recursively.
At the $i$th step, we partition unselected rectangles $S^-_{i-1}$ by dividing each spatial side into $2$ equal parts, and if 
\begin{equation}
\label{eq:RHIproof_eq1}
\frac{l_t(S_{i-1}^-)}{\lfloor 2^{p} \rfloor} \leq \frac{(1-\gamma)L^p}{2^{pi}},
\end{equation}
we divide the time interval of $S^-_{i-1}$ into $\lfloor 2^{p} \rfloor$ equal parts. 
If
\begin{equation}
\label{eq:RHIproof_eq2}
\frac{l_t(S_{i-1}^-)}{\lfloor 2^{p} \rfloor} > \frac{(1-\gamma)L^p}{2^{pi}},
\end{equation}
we divide the time interval of $S^-_{i-1}$ into $\lceil 2^{p} \rceil$ equal parts.
We obtain subrectangles $S^-_i$. 
For every $S^-_i$, there exists a unique rectangle $R_i$ with spatial side length $l_x = L / 2^{i}$ and time length $l_t = 2 L^p / 2^{pi}$
such that $R_i$ has the same top as $S^-_i$.
Select those $S^-_i$ for which 
\[
\lambda < \dashint_{S^-_i} w \dx\dt = \frac{w(S^-_i)}{\lvert S^-_i \rvert} 
\]
and denote the obtained collection by $\{ S^-_{i,j} \}_j$.
If 
\[
\lambda \geq \dashint_{S^-_i} w \dx\dt ,
\]
we continue the selection process in $S^-_i$.
This manner we obtain a collection $\{S^-_{i,j} \}_{i,j}$ of pairwise disjoint rectangles.

Observe that if \eqref{eq:RHIproof_eq1} holds, then we have
\[
l_t(S_i^-) = \frac{l_t(S^-_{i-1})}{\lfloor 2^{p} \rfloor} \leq \frac{(1-\gamma)L^p}{2^{pi}}.
\]
On the other hand, if \eqref{eq:RHIproof_eq2} holds, then
\[
l_t(S_i^-) = \frac{l_t(S^-_{i-1})}{\lceil 2^{p} \rceil} \leq \frac{l_t(S^-_{i-1})}{2^{p}} \leq \dots \leq \frac{(1-\gamma)L^p}{2^{pi}} .
\]
Hence, we get an upper bound for the time length
\[
l_t(S_i^-) \leq \frac{(1-\gamma)L^p}{2^{pi}}
\]
for every $S_i^-$.

Suppose that \eqref{eq:RHIproof_eq2} is satisfied at the $i$th step, that is,
\[
\frac{l_t(S_{i-1}^-)}{\lfloor 2^{p} \rfloor} > \frac{(1-\gamma)L^p}{2^{pi}}.
\]
Then we have a lower bound for the time length of $S_i^-$, since
\begin{align*}
l_t(S^-_i) = \frac{l_t(S_{i-1}^-)}{\lceil 2^{p} \rceil} > \frac{\lfloor 2^{p} \rfloor}{\lceil 2^{p} \rceil} \frac{(1-\gamma)L^p}{2^{pi}} > \frac{1}{2} \frac{(1-\gamma)L^p}{2^{pi}} .
\end{align*}
On the other hand, if \eqref{eq:RHIproof_eq1} is satisfied, then
\[
l_t(S^-_i) = \frac{l_t(S_{i-1}^-)}{\lfloor 2^{p} \rfloor} \geq \frac{l_t(S_{i-1}^-)}{ 2^{p}}.
\]
There are two alternatives at the $i$th step of the selection process:
Either \eqref{eq:RHIproof_eq2} has not yet been satisfied at the earlier steps or \eqref{eq:RHIproof_eq2} has been satisfied at a step $i'$ with $i'\le i$.
In the first case, we have
\begin{align*}
l_t(S^-_i) \geq \frac{l_t(S_{i-1}^-)}{ 2^{p}} \geq \frac{l_t(S_{i-2}^-)}{ 2^{2p}} \geq \dots \geq \frac{l_t(S_{0}^-)}{ 2^{pi}} > \frac{1}{2} \frac{(1-\gamma)L^p}{ 2^{pi}} .
\end{align*}
In the second case, we obtain the same estimate
\begin{align*}
l_t(S^-_i) \geq \frac{l_t(S_{i-1}^-)}{ 2^{p}} \geq \dots \geq \frac{l_t(S_{i'}^-)}{ 2^{p(i-i')}} > \frac{1}{2} \frac{(1-\gamma)L^p}{ 2^{pi}}
\end{align*}
by using the lower bound for $S_{i'}^-$.
Thus, we have 
\[
\frac{1}{2} \frac{(1-\gamma)L^p}{2^{pi}} \leq l_t(S^-_i) \leq \frac{(1-\gamma)L^p}{2^{pi}}
\]
for every $S^-_i$.
By using the lower bound for the time length of $S^-_i$,
we observe that
\begin{align*}
l_t(R_1) - l_t(S^-_1) 
&\leq \frac{2 L^p}{2^{p}} - \frac{1}{2} \frac{(1-\gamma)L^p}{2^{p}} \\
&= \frac{1}{2} \frac{L^p}{2^{p}} (3+ \gamma) 
\leq L^p = l_t(R^+_{0}) .
\end{align*}
This implies $R_i \subset R_0$
for every $i\in \mathbb{N}$.
By the construction of the subrectangles $S^-_i$, we have
\begin{equation}
\label{eq:RHIproof_measure1_1}
\lvert S^-_{0} \rvert = 2^{n} \bigg\lceil \frac{2^{p}}{1-\gamma} \bigg\rceil \lvert S^-_1 \rvert 
\end{equation}
and
\begin{equation}
\label{eq:RHIproof_measure1_2}
2^{n} \lfloor 2^{p} \rfloor \lvert S^-_i \rvert 
\leq \lvert S^-_{i-1} \rvert \leq 2^{n} \lceil 2^{p} \rceil \lvert S^-_i \rvert 
\end{equation}
for $i \geq 2$.

We have a collection $\{ S^-_{i,j} \}_{i,j}$ of pairwise disjoint rectangles. 
However, the rectangles in the corresponding collection $\{ S^+_{i,j} \}_{i,j}$ may overlap. 
Thus, we replace it by a maximal subfamily $\{ \widetilde{S}^+_{i,j} \}_{i,j}$ of pairwise disjoint rectangles, which is constructed in the following way.
At the first step, choose $\{ S^+_{1,j} \}_{j}$ and denote it $\{ \widetilde{S}^+_{1,j} \}_j$. 
Then consider the collection $\{ S^+_{2,j} \}_{j}$ where each $S^+_{2,j}$ either intersects some $\widetilde{S}^+_{1,j}$ or does not intersect any $\widetilde{S}^+_{1,j}$. 
Select the rectangles $S^+_{2,j}$, that do not intersect any $\widetilde{S}^+_{1,j}$, and denote the obtained collection $\{ \widetilde{S}^+_{2,j} \}_j$.
At the $i$th step, choose those $S^+_{i,j}$ that do not intersect any previously selected $\widetilde{S}^+_{i',j}$, $i' < i$.
Hence, we obtain a collection $\{ \widetilde{S}^+_{i,j} \}_{i,j}$ of pairwise disjoint rectangles.
Observe that for every $i,j$ there exists $i',j'$ with $i' < i$ such that
\begin{equation}
\label{eq:RHIproof_plussubset}
\text{pr}_x(S^+_{i,j}) \subset \text{pr}_x(\widetilde{S}^+_{i',j'}) \quad \text{and} \quad \text{pr}_t(S^+_{i,j}) \subset 3 \text{pr}_t(\widetilde{S}^+_{i',j'}) .
\end{equation}
Here pr$_x$ denotes the projection to $\mathbb R^n$ and pr$_t$ denotes the projection to the time axis.

Rename $\{ S^-_{i,j} \}_{i,j}$, $\{ \widetilde{S}^+_{i,j} \}_{i,j}$ as $\{ S^-_{i} \}_{i}$, $\{ \widetilde{S}^+_{j} \}_j$.
Note that $S^-_i$ is spatially contained in $S^+_i$, that is,  $\text{pr}_xS^-_i\subset \text{pr}_x S^+_i$.
In the time direction, we have
\begin{equation}
\label{eq:RHIproof_minusplussubset}
\text{pr}_t(S^-_i) \subset \text{pr}_t(R_i) 
\subset \frac{7 + \gamma}{1-\gamma} \text{pr}_t(S^+_i) ,
\end{equation}
since
\[
\biggl( \frac{7 + \gamma}{1-\gamma} + 1 \biggr) \frac{l_t(S^+_i)}{2} \geq  \frac{8}{1-\gamma}  \frac{(1-\gamma)L^p}{2^{pi+2}} = \frac{2L^p}{2^{pi}} = l_t(R_i) .
\]
Therefore, by~\eqref{eq:RHIproof_plussubset} and~\eqref{eq:RHIproof_minusplussubset}, it holds that
\begin{equation}
\label{eq:RHIproof_subsetcubes}
\sum_i \lvert S^-_i \rvert = \Big\lvert \bigcup_i S^-_i \Big\rvert \leq c_1 \sum_j \lvert \widetilde{S}^+_j \rvert ,
\end{equation}
with $c_1 = 3 \frac{7+\gamma}{1-\gamma}$.

If $(x,t) \in S^-_0 \setminus \bigcup_i S^-_i$, then there exists a sequence of subrectangles $S^-_l$ containing $(x,t)$ such that 
\[
\frac{w(S^-_l)}{\lvert S^-_l \rvert} = \dashint_{S^-_l} w \dx\dt \leq \lambda
\]
and $\lvert S^-_l \rvert \to 0$ as $l \to \infty$.
The Lebesgue differentiation theorem~\cite[Lemma~2.3]{KinnunenMyyryYang2022}
implies that $w(x,t) \leq \lambda$
for almost every $(x,t) \in S^-_0 \setminus \bigcup_i S^-_i$.
It follows that
\[
S^-_0 \cap \{ w > \lambda \} \subset \bigcup_i S^-_i
\]
up to a set of measure zero.

By~\eqref{eq:RHIproof_measure1_1},~\eqref{eq:RHIproof_measure1_2} and~\eqref{eq:RHIproof_subsetcubes}, we obtain
\begin{align*}
w( S^-_0 \cap \{ w > \lambda \} ) &\leq \sum_i w( S^-_i )\leq \sum_i w( S^-_{i-1} )\leq \lambda \sum_i \lvert S^-_{i-1} \rvert \\
&\leq 2^{n} \bigg\lceil \frac{2^{p}}{1-\gamma} \bigg\rceil \lambda \sum_i \lvert S^-_i \rvert 
\leq c_1 \frac{2^{n+p+1}}{1-\gamma} \lambda \sum_j \lvert \widetilde{S}^+_j \rvert .
\end{align*}
We have
\begin{align*}
w( \widetilde{S}^+_j \cap \{ w < \beta w_{\widetilde{S}^-_j} \} ) \leq \beta w_{\widetilde{S}^-_j} \lvert \widetilde{S}^+_j \rvert = \beta w(\widetilde{S}^-_j) \leq \beta w(\widetilde{R}^-_j(\gamma)) .
\end{align*}
Then by the assumption (qualitative measure condition) it holds that
\[
\lvert \widetilde{S}^+_j \cap \{ w < \beta w_{\widetilde{S}^-_j} \} \rvert < \alpha \lvert \widetilde{R}^+_j(\gamma) \rvert \leq 2 \alpha \lvert \widetilde{S}^+_j \rvert
\]
from which we obtain
\[
(1- 2\alpha) \lvert \widetilde{S}^+_j \rvert < \lvert \widetilde{S}^+_j \cap \{ w \geq \beta w_{\widetilde{S}^-_j} \} \rvert \leq \lvert \widetilde{S}^+_j \cap \{ w > \beta \lambda \} \rvert   ,
\]
since $w_{\widetilde{S}^-_j} > \lambda $.
Thus, we can conclude that
\begin{align*}
w( S^-_0 \cap \{ w > \lambda \} ) &\leq c \lambda \sum_j \lvert \widetilde{S}^+_j \cap \{ w > \beta \lambda \} \rvert \leq c \lambda \lvert R_0 \cap \{ w > \beta \lambda \} \rvert ,
\end{align*}
where
\[
c = 
c_1 \frac{2^{n+p+1}}{1-\gamma} \frac{1}{1- 2\alpha}.
\]
\end{proof}

With the previous lemma, we show that the parabolic Muckenhoupt condition implies the parabolic reverse H\"older inequality, compare with \cite[Theorem 5.2]{kinnunenSaariParabolicWeighted}.

\begin{theorem}
\label{thm:RHI}
Let $1\leq q<\infty$, $0<\gamma<1$ and $w \in A_q^+(\gamma)$.
Then there exists $\varepsilon>0$ and $c>0$ such that
\[
\biggl( \dashint_{R^-} w^{1+\varepsilon} \biggr)^\frac{1}{1+\varepsilon} \leq c \dashint_{R^+} w
\]
for every parabolic rectangle $R \subset \mathbb R^{n+1}$.
\end{theorem}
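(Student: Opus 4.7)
Since $w\in A_q^+(\gamma)$, Theorem~\ref{thm:Ainfty}, via the equivalent condition~(iii), lets me pick $\alpha\in(0,\tfrac12)$ and the corresponding $\beta\in(0,1)$ so that the qualitative measure condition holds and Lemma~\ref{lemma:weightmeasure-estimate} applies, yielding the good-$\lambda$ inequality $w(R^-\cap\{w>\lambda\})\le c\lambda|R\cap\{w>\beta\lambda\}|$ for every $\lambda\ge w_{R^-}$. Fix a parabolic rectangle $R$ and $\varepsilon>0$ to be chosen later. I would apply the weighted Cavalieri principle
\[
\int_{R^-}w^{1+\varepsilon}=\varepsilon\int_0^\infty\lambda^{\varepsilon-1}w(R^-\cap\{w>\lambda\})\dla,
\]
split at $\lambda=w_{R^-}$, plug in the good-$\lambda$ inequality on the high-level piece, and change variables $\mu=\beta\lambda$ to obtain
\[
\int_{R^-}w^{1+\varepsilon}\le |R^-|w_{R^-}^{1+\varepsilon}+\frac{c\varepsilon\beta^{-1-\varepsilon}}{1+\varepsilon}\int_Rw^{1+\varepsilon}.
\]

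Writing $\int_R=\int_{R^-}+\int_{R^+}$ and choosing $\varepsilon$ small enough that $\rho:=c\varepsilon\beta^{-1-\varepsilon}/(1+\varepsilon)<1$, I would absorb the $R^-$-piece into the left-hand side, which yields
\[
(1-\rho)\dashint_{R^-}w^{1+\varepsilon}\le w_{R^-}^{1+\varepsilon}+\rho\dashint_{R^+}w^{1+\varepsilon}.
\]
The $A_q^+(\gamma)$ condition combined with Jensen's inequality and Lemma~\ref{lemma:doublingforward}~(ii) gives $w_{R^-}\le Cw_{R^+}$, so the first term on the right is already of the desired form $(\dashint_{R^+}w)^{1+\varepsilon}$. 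For the second term I would iterate the displayed estimate on the time-shifted rectangles $R_k=R(x,t+kL^p,L)$, which satisfy $R_k^-=R_{k-1}^+$, first truncating $w$ to $w_M=\min\{w,M\}\in A_q^+(\gamma)$ via Lemma~\ref{lem:trunctation} so that the tail of the iteration vanishes as the number of steps tends to infinity, and then recovering the bound for $w$ by Fatou's lemma.

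The \emph{main obstacle} is the summability of the iterative series $\sum_{k\ge0}\bigl(\tfrac{\rho}{1-\rho}\bigr)^k w_{R_k^-}^{1+\varepsilon}$ uniformly in $M$. The $A_q^+(\gamma)$ condition together with Jensen provides only $w_{R_k^-}\le Cw_{R_{k+1}^-}$, i.e.\ a \emph{lower} bound on the future average in terms of the present one, so the growth of $w_{R_k^-}^{1+\varepsilon}$ in $k$ must be controlled by means other than the pointwise Muckenhoupt inequality. Closing the argument calls for a delicate chaining that uses the doubling property of Lemma~\ref{lemma:doublingforward}~(ii), possibly combined with Theorem~\ref{thm:timelagchange}, to ensure that the geometric decay $\rho/(1-\rho)<1$ beats the admissible growth of $w_{R_k^-}^{1+\varepsilon}$, with a final constant depending only on $n$, $p$, $q$, $\gamma$ and $[w]_{A_q^+(\gamma)}$.
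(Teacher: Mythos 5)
The opening of your argument matches the paper: Theorem~\ref{thm:Ainfty} reduces to the qualitative measure condition (with $\alpha<\tfrac12$), Lemma~\ref{lemma:weightmeasure-estimate} gives the good-$\lambda$ inequality, and Cavalieri plus absorption gives
\[
\int_{R^-}w^{1+\varepsilon}\le c_0|R^-|w_{R^-}^{1+\varepsilon}+c_1\varepsilon\int_{R^+}w^{1+\varepsilon},
\]
which is exactly inequality \eqref{eq:RHIproof_cavalieri_iteration} in the paper. The genuine gap is in how you propose to iterate on the remaining term $\int_{R^+}w^{1+\varepsilon}$.

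Your plan is to time-translate the \emph{same} rectangle, setting $R_k=R(x,t+kL^p,L)$ so that $R_k^-=R_{k-1}^+$, and to iterate \eqref{eq:RHIproof_cavalieri_iteration} along that chain. This escapes $R_0$ and marches off to $+\infty$ in time, and this is fatal for a one-sided weight. The $A_q^+(\gamma)$ condition with Jensen controls $w_{R_k^-}$ only by $C\,w_{R_k^+}=C\,w_{R_{k+1}^-}$, i.e.\ a lower bound on the future average, as you yourself note. There is nothing in a one-sided Muckenhoupt hypothesis that prevents $w_{R_k^-}$ from growing like $C^k$ for a constant $C>1$ tied to $[w]_{A_q^+(\gamma)}$, and $\varepsilon$ must also be chosen small in terms of that same constant, so there is no way to guarantee that the middle sum $\sum_k (c_1\varepsilon)^k w_{R_k^-}^{1+\varepsilon}$ converges to something controlled by $w_{R_0^+}$. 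Even if it did converge, it would be bounded by averages of $w$ over rectangles that have nothing to do with $R_0^+$. Truncating $w$ and invoking Fatou only handles the tail of the series, not the growth of the $w_{R_k^-}$ terms. You flag this as the ``main obstacle'' but the routes you gesture at (Lemma~\ref{lemma:doublingforward}~(ii), Theorem~\ref{thm:timelagchange}) give estimates in the wrong direction — they bound past averages by future ones, not vice versa — so they cannot close the gap.

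The paper resolves this with a \emph{shrinking} covering that never leaves $R_0$. It covers $R_0^-$ by $M=2^{n+1}$ lower halves $R_{1,j}^-$ of rectangles with side length $L/2^{1/p}$ (time length $L^p/2$), then covers each $R_{1,j}^+$ by $M$ lower halves of side length $L/2^{2/p}$, and so on; at step $i$ the side length is $L/2^{i/p}$. Because the time spans $\sum_i L^p/2^i$ telescope to $L^p$, every $R_{i,j}$ remains inside $R_0$. This containment, together with the bounded overlap of the covering at each level, lets the paper bound the inner sum $\sum_j|R^-_{i,j}|\,w_{R^-_{i,j}}^{1+\varepsilon}$ by a geometric factor times $L^{n+p}\,w_{R_0}^{1+\varepsilon}$; choosing $\varepsilon$ small enough that $c_1\varepsilon\,2^{(n/p+1)\varepsilon}\,M/2<1$ then makes the series summable, and the tail $(c_1\varepsilon\,M/2)^N\int_{R_0}w^{1+\varepsilon}$ vanishes for bounded $w$. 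That geometric localization — shrinking scales that stay inside a fixed rectangle — is the idea your proposal is missing, and without it the iteration does not close.
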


\begin{proof}

By Theorem~\ref{thm:Ainfty}, we see that the assumptions of Lemma~\ref{lemma:weightmeasure-estimate} are satisfied and thus it can be applied.
We prove the claim first for bounded functions. Thus, assume that $w$ is bounded.
Let $R \subset \mathbb R^{n+1}$ be a parabolic rectangle.
Let $\varepsilon>0$ to be chosen later.
Applying Cavalieri's principle with Lemma~\ref{lemma:weightmeasure-estimate}, we obtain
\begin{align*}
\int_{R^-} w^{1+\varepsilon} &= \varepsilon \int_0^\infty \lambda^{\varepsilon-1} w( R^{-} \cap \{ w > \lambda \}  ) \dla \\
&= \varepsilon \int_0^{w_{R^-}} \lambda^{\varepsilon-1} w( R^{-} \cap \{ w > \lambda \}  ) \dla + \varepsilon \int_{w_{R^-}}^\infty \lambda^{\varepsilon-1} w( R^{-} \cap \{ w > \lambda \}  ) \dla \\
&\leq w( R^{-} ) \varepsilon \int_0^{w_{R^-}} \lambda^{\varepsilon-1} \dla + c \varepsilon \int_{w_{R^-}}^\infty \lambda^{\varepsilon} \lvert R \cap \{ w > \beta \lambda \} \rvert \dla \\
&\leq w( R^{-} ) w_{R^-}^\varepsilon + c \beta^{-1-\varepsilon} \varepsilon \int_0^\infty \lambda^{\varepsilon} \lvert R \cap \{ w > \lambda \} \rvert \dla \\
&\leq \lvert R^{-} \rvert w_{R^-}^{1+\varepsilon} + c \beta^{-1-\varepsilon} \frac{\varepsilon}{1+\varepsilon} \int_{R} w^{1+\varepsilon} .
\end{align*}
By choosing $\varepsilon>0$ to be small enough, we can absorb the integral over $R^-$ of the second term to the left-hand side to get
\begin{align*}
\biggl( 1- \frac{c}{\beta^{1+\varepsilon}} \frac{\varepsilon}{1+\varepsilon} \biggr) \int_{R^-} w^{1+\varepsilon} \leq \lvert R^{-} \rvert w_{R^-}^{1+\varepsilon} + \frac{c}{\beta^{1+\varepsilon}} \frac{\varepsilon}{1+\varepsilon} \int_{R^+} w^{1+\varepsilon} .
\end{align*}
Hence, we have
\begin{equation}
\label{eq:RHIproof_cavalieri_iteration}
\int_{R^-} w^{1+\varepsilon} \leq c_0 \lvert R^{-} \rvert w_{R^-}^{1+\varepsilon} + c_1 \varepsilon \int_{R^+} w^{1+\varepsilon} ,
\end{equation}
where
\[
c_0 = \frac{1+\varepsilon }{1-(c \beta^{-1-\varepsilon} -1) \varepsilon} \quad \text{and} \quad c_1 = \frac{c \beta^{-1-\varepsilon} }{1-(c \beta^{-1-\varepsilon} -1) \varepsilon} .
\]

Fix $R_0 = Q(x_0,L) \times (t_0 - L^p, t_0 + L^p) \subset \mathbb R^{n+1}$.
We cover $R^-_0$ by $M = 2^{n+1}$ rectangles $R^-_{1,j}$ with spatial side length $l_x = L/2^{1/p}$ and time length $l_t = L^p / 2$. This can be done by dividing each spatial edge of $R^-_0$ into two equally long intervals that may overlap each other, and the time interval of $R^-_0$ into two equally long pairwise disjoint intervals.
Observe that the overlap of $R^-_{1,j}$ is bounded by $M/2 = 2^n$.
Then consider $R^+_{1,j}$ and cover it in the same way as before by $M$ rectangles $R^-_{2,j}$ with spatial side length $l_x = L/2^{2/p}$ and time length $l_t = L^p / 2^2$.
At the $i$th step, cover $R^+_{i-1,j}$ by $M$ rectangles $R^-_{i,j}$ with spatial side length $l_x = L/2^{i/p}$ and time length $l_t = L^p / 2^i$ such that their overlap is bounded by $M/2$.
Note that every $R_{i,j}$ is contained in $R_0$.
Then iterating~\eqref{eq:RHIproof_cavalieri_iteration} we obtain
\begin{align*}
\int_{R^-_0} w^{1+\varepsilon} &\leq \sum_{j=1}^{M} \int_{R^-_{1,j}} w^{1+\varepsilon}  \leq \sum_{j=1}^{M} c_0 \lvert R^{-}_{1,j} \rvert w_{R^-_{1,j}}^{1+\varepsilon} + \sum_{j=1}^{M} c_1 \varepsilon \int_{R^+_{1,j}} w^{1+\varepsilon} \\
&\leq c_0 \sum_{j=1}^M \lvert R^{-}_{1,j} \rvert w_{R^-_{1,j}}^{1+\varepsilon} + c_1 \varepsilon \sum_{j=1}^{M^2} \int_{R^-_{2,j}} w^{1+\varepsilon} \\
&\leq c_0 \sum_{j=1}^M \lvert R^{-}_{1,j} \rvert w_{R^-_{1,j}}^{1+\varepsilon} + c_1 \varepsilon \sum_{j=1}^{M^2} \biggl( c_0 \lvert R^-_{2,j}\rvert w_{R^-_{2,j}}^{1+\varepsilon} + c_1 \varepsilon \int_{R^+_{2,j}} w^{1+\varepsilon} \biggr) \\
&= c_0 \sum_{j=1}^M \lvert R^{-}_{1,j} \rvert w_{R^-_{1,j}}^{1+\varepsilon} + c_0 c_1 \varepsilon \sum_{j=1}^{M^2} \lvert R^-_{2,j}\rvert w_{R^-_{2,j}}^{1+\varepsilon} + (c_1 \varepsilon)^2 \sum_{j=1}^{M^2} \int_{R^+_{2,j}} w^{1+\varepsilon} \\
&\leq c_0 \sum_{i=1}^N \biggl( (c_1 \varepsilon)^{i-1} \sum_{j=1}^{M^i} \lvert R^-_{i,j} \rvert w^{1+\varepsilon}_{R^-_{i,j}} \biggr) + (c_1 \varepsilon)^N \sum_{j=1}^{M^N} \int_{R^+_{N,j}} w^{1+\varepsilon} \\
&\leq c_0 \sum_{i=1}^N \biggl( (c_1 \varepsilon)^{i-1} \sum_{j=1}^{M^i} \lvert R^-_{i,j} \rvert w^{1+\varepsilon}_{R^-_{i,j}} \biggr) + \biggl(c_1 \varepsilon \frac M2\biggr)^N \int_{R_0} w^{1+\varepsilon} \\
&= I + II .
\end{align*}
We observe that $II$ tends to zero if $\varepsilon < \tfrac{2}{c_1 M} = \tfrac{1}{c_1 2^n}$ as $N \to \infty$.
Since
\[
\lvert R^-_{i,j} \rvert^{-\varepsilon} = L^{-(n+p)\varepsilon} 2^{(\tfrac{n}{p}+1)i \varepsilon } =  2^{1+\varepsilon} L^{n+p} 2^{(\tfrac{n}{p}+1)i \varepsilon } \lvert R_{0} \rvert^{-(1+\varepsilon)},
\]
for the inner sum of the first term $I$ we have 
\begin{align*}
\sum_{j=1}^{M^i} \lvert R^-_{i,j} \rvert w^{1+\varepsilon}_{R^-_{i,j}} = \sum_{j=1}^{M^i} \lvert R^-_{i,j} \rvert^{-\varepsilon} \biggl( \int_{R^-_{i,j}} w \biggr)^{1+\varepsilon} \leq 2^{1+\varepsilon} L^{n+p} 2^{(\tfrac{n}{p}+1)i \varepsilon } \biggl(\frac M2\biggr)^i w^{1+\varepsilon}_{R_0} .
\end{align*}
Thus, it follows that
\begin{align*}
I \leq c_0 2^{1+\varepsilon} L^{n+p} w^{1+\varepsilon}_{R_0}  \sum_{i=1}^N  (c_1 \varepsilon)^{i-1} 2^{(\tfrac{n}{p}+1)i \varepsilon }  \biggl(\frac M2\biggr)^i .
\end{align*}
We estimate the sum by
\begin{align*}
\sum_{i=1}^N  (c_1 \varepsilon)^{i-1} 2^{(\tfrac{n}{p}+1)i \varepsilon } \biggl(\frac M2\biggr)^i
&= 2^{(\tfrac{n}{p}+1) \varepsilon } \frac{M}{2} \sum_{i=0}^{N-1} \biggl( c_1 \varepsilon 2^{(\tfrac{n}{p}+1) \varepsilon } \frac{M}{2} \biggr)^i \\
&\leq 2^{(\tfrac{n}{p}+1) \varepsilon } \frac{M}{2} \frac{1}{1-c_1 \varepsilon 2^{(\tfrac{n}{p}+1) \varepsilon } \frac{M}{2} } \\
&= \frac{2^{(\tfrac{n}{p}+1) \varepsilon +n }}{1-c_1 \varepsilon 2^{(\tfrac{n}{p}+1) \varepsilon + n } } ,
\end{align*}
whenever $\varepsilon$ is small enough, for example
\[
\varepsilon < \frac{1}{c_1 2^{\tfrac{n}{p}} M } = \frac{1}{c_1 2^{\tfrac{n}{p}+n+1} } .
\]
Then it holds that
\begin{align*}
\int_{R^-_0} w^{1+\varepsilon} &\leq c_0 2^{1+\varepsilon} L^{n+p} w^{1+\varepsilon}_{R_0} \frac{ 2^{(\tfrac{n}{p}+1) \varepsilon +n } }{1- c_1 \varepsilon 2^{(\tfrac{n}{p}+1) \varepsilon +n } }
\end{align*}
for small enough $\varepsilon$.
Since $w_{R^-_0} \leq C w_{R^+_0}$ for some $C=C(\gamma, [w]_{A^+_q(\gamma)})$, see the proof of Theorem~\ref{thm:timelagchange},
we conclude that
\begin{align*}
\biggl( \dashint_{R^-_0} w^{1+\varepsilon} \biggr)^\frac{1}{1+\varepsilon} &\leq c_2 \dashint_{R_0} w = \frac{c_2}{2} \dashint_{R^-_0} w + \frac{c_2 }{2} \dashint_{R^+_0} w \leq \frac{c_2 }{2} (C+1) \dashint_{R^+_0} w ,
\end{align*}
where
\[
c_2 = 2 \Biggl( c_0 \frac{ 2^{(\tfrac{n}{p}+1) \varepsilon +n } }{1- c_1 \varepsilon 2^{(\tfrac{n}{p}+1) \varepsilon +n}} \Biggr)^\frac{1}{1+\varepsilon} .
\]
Hence, the claim holds for bounded functions. For unbounded $w$, we consider truncations $\min\{w,k\}$, $ k \in \mathbb N $, and apply the claim with the monotone convergence theorem as $k \to \infty$.
This completes the proof.
\end{proof}

\begin{corollary}
\label{cor:RHIlagged}
Let $1\leq q<\infty$, $0<\gamma<1$ and $w \in A_q^+(\gamma)$.
Let $0\leq \alpha<1$ and $\tau>0$.
Then there exist $\varepsilon>0$ and $c>0$ such that
\[
\biggl( \dashint_{R^-(\alpha)} w^{1+\varepsilon} \biggr)^\frac{1}{1+\varepsilon} \leq c \dashint_{S^+(\alpha)} w
\]
for every parabolic rectangle $R = R(x,t,L) \subset \mathbb R^{n+1}$, where 
$S^+(\alpha) = R^-(\alpha) + (0, \tau (1-\alpha) L^p)$.
\end{corollary}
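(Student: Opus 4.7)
The idea is to reduce to Theorem~\ref{thm:RHI} by a forward-in-time chaining argument. Applied to any parabolic rectangle $\widetilde{R}$, Theorem~\ref{thm:RHI} combined with Jensen's inequality gives the one-step estimate
\[
\biggl(\dashint_{\widetilde{R}^-}w^{1+\varepsilon}\biggr)^{\!1/(1+\varepsilon)}\leq c\,\dashint_{\widetilde{R}^+}w\leq c\,\biggl(\dashint_{\widetilde{R}^+}w^{1+\varepsilon}\biggr)^{\!1/(1+\varepsilon)},
\]
which iterates cleanly along any sequence of parabolic rectangles in which the upper part of each coincides with the lower part of the next.

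Fix $R=R(x_0,t_0,L)$ and choose a scale $\widetilde{L}$ and a positive integer $M$ with $M\widetilde{L}^p=(1-\alpha)L^p$, $L/\widetilde{L}\in\mathbb{N}$, and $M$ sufficiently large in terms of $\tau$. Partition $Q(x_0,L)$ into disjoint sub-cubes $\widetilde{Q}_i$ of side $\widetilde{L}$, and for $m\in\mathbb{Z}$ set $I_m=(t_0-L^p+m\widetilde{L}^p,\,t_0-L^p+(m+1)\widetilde{L}^p)$. Let $\widetilde{R}_i^{(m)}$ denote the parabolic rectangle of side $\widetilde{L}$ with lower part $\widetilde{Q}_i\times I_m$; then $\widetilde{R}_i^{(m)+}=\widetilde{Q}_i\times I_{m+1}=\widetilde{R}_i^{(m+1)-}$, the family $\{\widetilde{R}_i^{(m)-}\}_{m=0}^{M-1}$ tiles $R^-(\alpha)$, and the slice $\widetilde{Q}_i\times I_m$ lies in $S^+(\alpha)$ exactly when $m\in[\tau M,(1+\tau)M-1]$.

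For each starting index $(i,m_0)$, choose a terminal index $m_\ast\geq m_0$ with $\widetilde{R}_i^{(m_\ast)+}\subset S^+(\alpha)$; such $m_\ast$ exists by the choice of $M$, and the chain length $m_\ast-m_0+1$ is bounded in terms of $\alpha,\tau,n,p$. Iterating the one-step estimate $m_\ast-m_0$ times and applying Theorem~\ref{thm:RHI} once more yields
\[
\biggl(\dashint_{\widetilde{R}_i^{(m_0)-}}w^{1+\varepsilon}\biggr)^{\!1/(1+\varepsilon)}\leq c^{m_\ast-m_0+1}\dashint_{\widetilde{R}_i^{(m_\ast)+}}w\leq C\dashint_{S^+(\alpha)}w,
\]
where the last inequality uses the bounded volume ratio $|S^+(\alpha)|/|\widetilde{R}_i^{(m_\ast)+}|=(L/\widetilde{L})^nM$. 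Raising to the $(1+\varepsilon)$-th power, multiplying by $|\widetilde{R}_i^{(m_0)-}|$, and summing over the tiling $(i,m_0)$ of $R^-(\alpha)$ gives the claim upon taking $(1+\varepsilon)$-th roots.

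The principal technical obstacle is the divisibility bookkeeping needed so that the tiling of $R^-(\alpha)$ and the chain terminal indices line up with $S^+(\alpha)$ exactly. When this fails, for instance when $\tau M\notin\mathbb{Z}$, the fractional time-shift device from the proof of Theorem~\ref{thm:timelagchange} is invoked: a few of the rectangles near the end of each chain are translated by a sub-$\widetilde{L}^p$ amount so that the chain terminates precisely inside $S^+(\alpha)$, and since the consecutive intersection ratios $|\widetilde{R}_i^{(m)+}\cap\widetilde{R}_i^{(m+1)-}|/|\widetilde{R}_i^{(m)+}|$ remain bounded below by a positive constant, Theorem~\ref{thm:RHI} continues to apply at each step with only a bounded multiplicative increase in the constant.
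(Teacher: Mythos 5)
Your plan is in essence the paper's proof: partition $R^-(\alpha)$ into small parabolic lower halves, apply Theorem~\ref{thm:RHI} to each, use H\"older to restart at the next lower half, and aggregate; the paper just organizes the chaining differently, writing $\tau=N+\beta$ with $N\in\mathbb N$ and $0<\beta\le 1$, absorbing the fractional step $\beta$ into the first link by taking tiles of size $\rho L$ with $\rho=\beta^{1/p}(1-\alpha)^{1/p}$, and then iterating the aggregated inequality $N$ times through the shifted sets $S^+_k(\alpha)$, rather than chaining each tile individually to a terminal rectangle as you do. Those are bookkeeping choices, not a different route.

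There is, however, one precise gap in your setup. You ask for $L/\widetilde{L}\in\mathbb N$ and simultaneously $M\widetilde{L}^p=(1-\alpha)L^p$ with $M\in\mathbb N$; writing $K=L/\widetilde{L}$ these force $(1-\alpha)K^p\in\mathbb N$, which fails for generic $\alpha$ and $p$, so the ``exact tiling of $R^-(\alpha)$'' you assert does not exist. Moreover the fix you invoke (a fractional time-shift of a few rectangles near the end of each chain) is aimed at the terminal alignment with $S^+(\alpha)$, which actually needs no repair: once $M\ge\max\{2,1/\tau\}$ the interval $[\max\{m_0,\lceil\tau M\rceil-1\},\,(1+\tau)M-2]$ always contains an integer $m_\ast$, so a valid terminal index exists. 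What does need repair is the initial spatial tiling. The standard remedy, and the one the paper uses, is to pick $\widetilde{L}^p=(1-\alpha)L^p/M$ for a suitable integer $M$, so that the time intervals $I_m$ tile $(t_0-L^p,\,t_0-\alpha L^p)$ exactly and the chain links $\widetilde{R}_i^{(m)+}=\widetilde{R}_i^{(m+1)-}$ hold, and then to cover $Q(x_0,L)$ by $\lceil L/\widetilde{L}\rceil^n$ subcubes of side $\widetilde{L}$ with overlap bounded by a dimensional constant; the bounded overlap is then absorbed into the final constant exactly as in the paper's $2^{n+1}$ factor. With this change your argument closes.
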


\begin{proof}
Let $R \subset \mathbb R^{n+1}$ be a parabolic rectangle with side length $L$.
Choose $N\in\mathbb{N}$ and $0<\beta\leq1$ such that
$\tau = N + \beta$.
Denote $S_0^+(\alpha) = R^-(\alpha) + (0, \beta (1-\alpha) L^p)$
and $S_k^+(\alpha) = R^-(\alpha) + (0, (k+\beta) (1-\alpha) L^p)$ for $k=1,\dots,N$.
Note that $S_N^+(\alpha) = S^+(\alpha)$.

Denote $\rho = \beta^{1/p} (1-\alpha)^{1/p}$.
We partition $R^-(\alpha)$ into $M = \lceil \rho^{-1} \rceil^n \lceil \rho^{-p} \rceil$ subrectangles $R^-_{i}$ with spatial side length $\rho L$ and time length $\rho^p L^p$ such that the overlap of $\{R^-_{i}\}_i$ is bounded by $2^{n+1}$. 
This can be done by dividing each spatial edge of $R^-(\alpha)$ into $\lceil \rho^{-1} \rceil$ equally long 
subintervals with an overlap bounded by $2$, and the time interval of $R^-(\alpha)$ into $\lceil \rho^{-p} \rceil$ equally long subintervals with an overlap bounded by $2$.
We observe that every $R^{+}_i$ is contained in $S^+_0(\alpha)$.
Then
Theorem~\ref{thm:RHI} implies that there exists a constant $C_1$ such that
\begin{align*}
\biggl( \dashint_{R^-(\alpha)} w^{1+\varepsilon} \biggr)^\frac{1}{1+\varepsilon} 
&\leq 
\Biggl( \sum_i \frac{\lvert R^-_i \rvert}{\lvert R^-(\alpha) \rvert} \dashint_{R^-_i} w^{1+\varepsilon} \Biggr)^\frac{1}{1+\varepsilon} \\
&\leq
\biggl( \frac{\rho^{n+p}}{1-\alpha} \biggr)^\frac{1}{1+\varepsilon}
\sum_i \biggl( \dashint_{R^-_i} w^{1+\varepsilon} \biggr)^\frac{1}{1+\varepsilon} \\
&\leq 
\bigl( \beta^{\frac{n}{p}+1} (1-\alpha)^\frac{n}{p} \bigr)^\frac{1}{1+\varepsilon} 
C_1 \sum_i \dashint_{R^{+}_i} w \\
&= 
\bigl( \beta^{\frac{n}{p}+1} (1-\alpha)^\frac{n}{p} \bigr)^\frac{1}{1+\varepsilon} 
C_1 \sum_i \frac{\lvert S^+_0(\alpha) \rvert}{\lvert R^{+}_i \rvert} \frac{1}{\lvert S^+_0(\alpha) \rvert} \int_{R^{+}_i} w \\
&\leq
\bigl( \beta^{\frac{n}{p}+1} (1-\alpha)^\frac{n}{p} \bigr)^{\frac{1}{1+\varepsilon}-1}
C_1
2^{n+1} \dashint_{S^{+}_0(\alpha)} w \\
&= 
C_2
\dashint_{S^{+}_0(\alpha)} w ,
\end{align*}
where $C_2 = \bigl( \beta^{\frac{n}{p}+1} (1-\alpha)^\frac{n}{p} \bigr)^{-\frac{\varepsilon}{1+\varepsilon}} C_1 2^{n+1}$.

By iterating the previous argument and H\"older's inequality, we obtain
\begin{align*}
\dashint_{S^{+}_0(\alpha)} w &\leq \biggl( \dashint_{S^{+}_0(\alpha)} w^{1+\varepsilon} \biggr)^\frac{1}{1+\varepsilon} 
\leq \frac{C_1 2^{n+1}}{(1-\alpha)^{\frac{n }{p} 
\frac{\varepsilon}{1+\varepsilon} 
}}
\dashint_{S^{+}_1(\alpha)} w \\
&\leq C_3^N \dashint_{S^{+}_N(\alpha)} w
\leq C_3^\tau \dashint_{S^{+}(\alpha)} w ,
\end{align*}
where $C_3 = (1-\alpha)^{-\frac{n }{p} 
\frac{\varepsilon}{1+\varepsilon} 
} C_1 2^{n+1}$.
Thus, we conclude that
\[
\biggl( \dashint_{R^-(\alpha)} w^{1+\varepsilon} \biggr)^\frac{1}{1+\varepsilon} 
\leq 
C_2
\dashint_{S^{+}_0(\alpha)} w
\leq
C_2
C_3^\tau \dashint_{S^{+}(\alpha)} w .
\]
\end{proof}

The parabolic reverse H\"older inequality implies the following self-improving property for parabolic Muckenhoupt weights.

\begin{corollary}
\label{cor:Aqselfimprove}
Let $1<q<\infty$, $0<\gamma<1$ and $w \in A^+_{q}(\gamma)$. Then there exists $\varepsilon=\varepsilon(n,p,q,\gamma,[w]_{A^+_{q}}) >0$ such that $w \in A^+_{q-\varepsilon}(\gamma)$ with $[w]_{A^+_{q-\varepsilon}}$ depending on 
$n,p,q,\gamma$ and $[w]_{A^+_{q}}$.
\end{corollary}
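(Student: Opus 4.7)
The plan is to combine duality (Lemma \ref{lem:dualityAp}) with the parabolic reverse H\"older inequality (Corollary \ref{cor:RHIlagged}) and the translated characterization of the class $A^+_{q-\varepsilon}(\gamma)$ given by Theorem \ref{thm:timelagchange}. Since $w\in A_q^+(\gamma)$, Lemma \ref{lem:dualityAp} yields $\sigma := w^{1/(1-q)}\in A^-_{q'}(\gamma)$. The time-reversed version of Corollary \ref{cor:RHIlagged}, applied with $\alpha = \gamma$ and a fixed parameter $\tau''>0$, produces $\delta>0$ and a constant $c$ such that
\[
\biggl(\dashint_{R^+(\gamma)} \sigma^{1+\delta}\biggr)^{1/(1+\delta)}
\leq c \dashint_{\widetilde{S}^-(\gamma)} \sigma,
\qquad
\widetilde{S}^-(\gamma) := R^+(\gamma) - (0, \tau''(1-\gamma)L^p),
\]
for every parabolic rectangle $R=R(x,t,L)$.

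The key geometric step is to introduce the shifted rectangle $R' := R - (0, \tau''(1-\gamma)L^p)$. By construction $(R')^+(\gamma) = \widetilde{S}^-(\gamma)$, and a direct computation shows $(R')^-(\gamma) = R^+(\gamma) - (0, \tau(1+\gamma)L^p) =: S^-(\gamma)$ with $\tau := 1 + \tau''(1-\gamma)/(1+\gamma) \geq 1$. Applying the $A_q^+(\gamma)$ condition to $R'$ gives
\[
\biggl(\dashint_{S^-(\gamma)} w\biggr)\biggl(\dashint_{\widetilde{S}^-(\gamma)} \sigma\biggr)^{q-1} \leq [w]_{A^+_q(\gamma)},
\]
and combining with the $(q-1)$-th power of the reverse H\"older inequality yields
\[
\biggl(\dashint_{S^-(\gamma)} w\biggr)\biggl(\dashint_{R^+(\gamma)} \sigma^{1+\delta}\biggr)^{(q-1)/(1+\delta)} \leq c^{q-1}[w]_{A^+_q(\gamma)}.
\]

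Setting $\varepsilon := (q-1)\delta/(1+\delta)$, one checks that $\sigma^{1+\delta} = w^{1/(1-(q-\varepsilon))}$ and $(q-1)/(1+\delta) = q-\varepsilon-1$, so the last inequality is exactly the translated $A^+_{q-\varepsilon}(\gamma)$ characterization of Theorem \ref{thm:timelagchange} at lag $\alpha=\gamma$ and parameter $\tau$. The ``if'' direction of that theorem then delivers $w\in A_{q-\varepsilon}^+(\gamma)$ with constants depending quantitatively on $n,p,q,\gamma$ and $[w]_{A_q^+(\gamma)}$, as claimed. The principal obstacle is that the parabolic reverse H\"older inequality is not local: unlike the Euclidean setting, where reverse H\"older and the $A_q$ condition live on the same cube, here the reverse H\"older for $\sigma$ controls an integral on $R^+(\gamma)$ by an integral on the backward-translated set $\widetilde{S}^-(\gamma)$. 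The remedy is precisely the observation that $\widetilde{S}^-(\gamma)$ is the upper part of the shifted rectangle $R'$, so that Theorem \ref{thm:timelagchange} absorbs the backward shift into the translated-lower-part characterization.
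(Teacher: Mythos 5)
Your proof is correct, and it in fact repairs a gap in the paper's own argument. The paper applies the time-reversed Corollary~\ref{cor:RHIlagged} with the translation parameter fixed so that the translated lower set coincides with $R^-(\gamma)$, obtaining
\[
\biggl(\dashint_{R^+(\gamma)} w^{(1+\delta)/(1-q)}\biggr)^{(q-1)/(1+\delta)} \leq c^{q-1}\biggl(\dashint_{R^-(\gamma)} w^{1/(1-q)}\biggr)^{q-1},
\]
and then tries to finish with $\bigl(\dashint_{R^-(\gamma)} w^{1/(1-q)}\bigr)^{q-1}\leq\bigl(\dashint_{R^-(\gamma)} w\bigr)^{-1}$, attributed to H\"older. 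But on a single fixed set H\"older (or Jensen) gives precisely the opposite direction, $\bigl(\dashint_{R^-(\gamma)} w\bigr)\bigl(\dashint_{R^-(\gamma)} w^{1/(1-q)}\bigr)^{q-1}\geq 1$, so the paper's final step does not close the argument. You avoid this by leaving the translation parameter $\tau''$ free, recognizing $\widetilde{S}^-(\gamma)$ as the upper part of the shifted rectangle $R'$, and applying the $A^+_q(\gamma)$ condition to $R'$ rather than Jensen on a fixed set; combined with the reverse H\"older estimate, this produces a genuine translated $A^+_{q-\varepsilon}$ bound with parameter $\tau=1+\tau''(1-\gamma)/(1+\gamma)\geq 1$ and lag $\alpha=\gamma$, which Theorem~\ref{thm:timelagchange} then converts into $w\in A^+_{q-\varepsilon}(\gamma)$. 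The extra cost is that you invoke the chaining machinery behind Theorem~\ref{thm:timelagchange}, which the paper's shorter (but flawed) route was trying to avoid; the payoff is a complete and correct proof.
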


\begin{proof}
We have $w^\frac{1}{1-q}  \in A_{q'}^-(\gamma)$.
Corollary~\ref{cor:RHIlagged} implies that there exist $\delta>0$ and $c>0$ such that
\begin{align*}
\biggl( \dashint_{R^+(\gamma)}  
w^\frac{1+\delta}{1-q}
\biggr)^\frac{1}{1+\delta} \leq c \dashint_{R^-(\gamma)} w^\frac{1}{1-q} 
\end{align*}
for every parabolic rectangle $R\subset\mathbb{R}^{n+1}$.
Let $\varepsilon = \frac{\delta}{1+\delta}(q-1)$. 
By the previous estimate and the parabolic Muckenhoupt condition, we obtain
\begin{align*}
\biggl( \dashint_{R^+(\gamma)} w^\frac{1}{1-(q-\varepsilon)} \biggr)^{q-\varepsilon-1}
&=
\biggl( \dashint_{R^+(\gamma)} w^\frac{1+\delta}{1-q} \biggr)^\frac{q-1}{1+\delta}
\leq
c 
\biggl( \dashint_{R^-(\gamma)} w^\frac{1}{1-q} \biggr)^{q-1}\\
&\leq
c [w]_{A^+_{q}(\gamma)}
\biggl( \dashint_{R^{--}(\gamma)} w\biggr)^{-1}
\end{align*}
for every parabolic rectangle $R\subset\mathbb{R}^{n+1}$,
where $R^{--}(\gamma)$ denotes the lower part of $R^{-}(\gamma)$.
Thus, by Theorem~\ref{thm:timelagchange} it follows that $w \in A^+_{q-\varepsilon}(\gamma)$.
\end{proof}

\section{Weighted norm inequalities for parabolic maximal functions}
\label{sec:parmaxfct}

The next theorem is a weak type estimate for the uncentered parabolic maximal function.
For the corresponding result for the centered parabolic maximal function, see~\cite{kinnunenSaariParabolicWeighted,MaHeYan2023,KinnunenMyyryYangZhu2022}.
The proof is based on~\cite{ForzaniMartinreyesOmbrosi2011}.

\begin{theorem}
\label{thm:weaktype}
Let $1\leq q<\infty$, $0\leq\gamma<1$ and $w$ be a weight.
Then $w \in A^+_{q}(\gamma)$ if and only if
there exists a constant $C=C(n,p,q,\gamma, [w]_{A^+_{q}(\gamma)})$ such that
\[
w( \{ M^{\gamma+} f > \lambda \}) \leq 
\frac{C}{\lambda^q} \int_{\mathbb{R}^{n+1}} \lvert f \rvert^q \, w
\]
for every $\lambda>0$ and $f\in L^1_{\mathrm{loc}}(\mathbb{R}^{n+1})$.
\end{theorem}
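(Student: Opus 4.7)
The plan is to prove both directions. Necessity is by testing the weak-type inequality against carefully chosen functions. Sufficiency combines a parabolic Besicovitch--Vitali-type covering selection with H\"older's inequality and the $A^+_q(\gamma)$ condition.

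\emph{Necessity.} Fix a parabolic rectangle $R$. For $q>1$, set $f = w^{1/(1-q)}\chi_{R^+(\gamma)}$. For every $(x,t)\in R^-(\gamma)$, the rectangle $R$ itself shows
\[
M^{\gamma+}f(x,t)\ge \dashint_{R^+(\gamma)} w^{1/(1-q)} =: \Lambda,
\]
so $R^-(\gamma)\subset\{M^{\gamma+}f>\lambda\}$ for all $\lambda<\Lambda$. Since $\int|f|^q w = \int_{R^+(\gamma)} w^{1/(1-q)}$, the hypothesis yields $w(R^-(\gamma))\le C\lambda^{-q}\int_{R^+(\gamma)} w^{1/(1-q)}$; letting $\lambda\uparrow\Lambda$ and rearranging gives the $A^+_q(\gamma)$ condition. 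For $q=1$, test with $f=\chi_E$ for measurable $E\subset R^+(\gamma)$ to obtain $\dashint_{R^-(\gamma)} w\le Cw(E)/|E|$, then shrink $E$ around a point where $w$ approaches its essential infimum on $R^+(\gamma)$.

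\emph{Sufficiency.} Set $\Omega_\lambda=\{M^{\gamma+}f>\lambda\}$, fix a compact set $K\subset\Omega_\lambda$, and for each $(x,t)\in K$ choose a parabolic rectangle $R_{(x,t)}$ with $(x,t)\in R_{(x,t)}^-(\gamma)$ and $\dashint_{R^+_{(x,t)}(\gamma)}|f|>\lambda$. The key step is to extract from $\{R_{(x,t)}\}_{(x,t)\in K}$ a countable subfamily $\{R_i\}$ such that $K\subset \bigcup_i R_i^-(\gamma)$ up to a null set and the upper parts $\{R_i^+(\gamma)\}$ have uniformly bounded overlap. Granted such a covering, for $q>1$ H\"older's inequality combined with the $A^+_q(\gamma)$ condition and $|R_i^-(\gamma)|=|R_i^+(\gamma)|$ gives for each $i$
\[
\lambda^q < \biggl(\dashint_{R_i^+(\gamma)}|f|^q w\biggr)\biggl(\dashint_{R_i^+(\gamma)} w^{1/(1-q)}\biggr)^{q-1} \le \frac{[w]_{A^+_q(\gamma)}}{w(R_i^-(\gamma))}\int_{R_i^+(\gamma)}|f|^q w,
\]
so that $w(R_i^-(\gamma))\le \lambda^{-q}[w]_{A^+_q(\gamma)}\int_{R_i^+(\gamma)}|f|^q w$. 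Summing over $i$ and using the bounded overlap of $\{R_i^+(\gamma)\}$ yields
\[
w(K)\le \sum_i w(R_i^-(\gamma))\le \frac{C[w]_{A^+_q(\gamma)}}{\lambda^q}\int_{\mathbb R^{n+1}}|f|^q w,
\]
and taking $K\uparrow\Omega_\lambda$ completes the argument. For $q=1$ an analogous estimate works, using the a.e.\ pointwise bound $w(y,s)\ge [w]_{A^+_1(\gamma)}^{-1}\dashint_{R_i^-(\gamma)} w$ valid on $R_i^+(\gamma)$ in place of H\"older.

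\emph{Main obstacle.} The heart of the proof lies in the covering selection. Because $M^{\gamma+}$ is one-sided in time and because $R^-(\gamma)$ and $R^+(\gamma)$ are disjoint sets sharing the same spatial footprint, the standard Vitali/Wiener argument does not apply directly; one needs a parabolic Besicovitch-type scheme in the spirit of Forzani--Mart\'{\i}n-Reyes--Ombrosi~\cite{ForzaniMartinreyesOmbrosi2011}. The idea is to order the candidate rectangles by (say) spatial side length, select greedily while discarding any rectangle whose $R^+(\gamma)$ would overlap the $R^+(\gamma)$ of a previously selected larger rectangle by more than a fixed fraction, and then verify via a comparability argument that mild dilates of the selected $R_i^-(\gamma)$ still cover $K$ up to a null set. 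The time lag $\gamma>0$ facilitates the overlap estimate by separating $R^-(\gamma)$ and $R^+(\gamma)$; the case $\gamma=0$ may be obtained by first treating small $\gamma>0$ and then invoking Theorem~\ref{thm:timelagchange}, which identifies the $A^+_q$ classes across different values of the lag.
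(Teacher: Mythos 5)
Your necessity argument is sound and matches the paper's, modulo the small point that one should first reduce to weights bounded below (so that $f=w^{1/(1-q)}\chi_{R^+(\gamma)}$ is locally integrable and $\Lambda<\infty$); the paper makes this reduction explicitly, and you should too for the $\lambda\uparrow\Lambda$ limit to be meaningful.

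The sufficiency argument has a genuine gap, and it lies exactly where you flag it: the covering selection. You assert that one can extract a subfamily $\{R_i\}$ with two simultaneous properties, namely $K\subset\bigcup_i R_i^-(\gamma)$ (up to a null set) and \emph{uniformly bounded overlap of the upper parts} $\{R_i^+(\gamma)\}$. This claim is not established by the scheme you sketch, and I do not believe it is true in the stated form. Your proposed greedy selection discards a candidate $R_z$ whenever $R_z^+(\gamma)$ overlaps a previously chosen, larger $R_i^+(\gamma)$; but then the center $z$ (which sits in $R_z^-(\gamma)$, hence \emph{below} $R_z^+(\gamma)$ in time by the lag $2\gamma l(R_z)^p$) can lie well inside the time range of $R_i^+(\gamma)$, far \emph{above} any reasonable dilate of $R_i^-(\gamma)$. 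Thus the discarded centers need not be covered, and the covering property fails. Note also that the dilations used in the paper, $P_z=5R_z$ with $\alpha=\gamma/5^p$, enlarge $R_z^-(\gamma)$ downward and outward but keep the same top time coordinate, so they offer no help with this obstruction either.

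The paper avoids the problem by proving something weaker and proceeding differently. It does \emph{not} get bounded overlap of the $R_i^+(\gamma)$: the first selection orders candidates by the top time coordinate and discards $R_{z_k}$ only when its \emph{center} $z_k$ already lies in a previously selected $P^-_{z_{k_l}}(\alpha)$ (this guarantees coverage by construction); a second pass by side length keeps those $P^-_{i}(\alpha)$ not contained in larger ones. Then, instead of bounded overlap of $R_i^+(\gamma)$, the paper shows that $R_i^-(\gamma)$ of comparable side length are pairwise disjoint, bounds $\sum_{j\in J_i}|R_j^+(\gamma)|\lesssim |R_i^+(\gamma)|$ by a boundary-layer argument split into $J_i^1,J_i^2$, and extracts subsets $F_i\subset R_i^+(\gamma)$ with $\int_{F_i}|f|\geq\tfrac12\int_{R_i^+(\gamma)}|f|$ that \emph{do} have bounded overlap. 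The final estimate runs through $F_i$, not through the $R_i^+(\gamma)$. An absolute-continuity trick on the truncated weight gives $w((1+\varepsilon)P_i^-(\alpha))\leq 2w(P_i^-(\alpha))$, needed to pass from the small balls covering $K$ to the $P_i^-(\alpha)$. Finally, your fallback for $\gamma=0$ via Theorem~\ref{thm:timelagchange} does not apply: that theorem requires $0<\gamma\leq\alpha$ and does not relate $A^+_q(0)$ to $A^+_q(\gamma)$ with $\gamma>0$, nor $M^{0+}$ to $M^{\gamma+}$. The paper's covering argument is arranged so that it handles $\gamma=0$ directly.
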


\begin{proof}
Assume that the weak type estimate holds.
We observe that it is enough to prove the claim for $w$ that is bounded from below.
Let $f= w^{1-q'} \chi_{R^+(\gamma)} $
and 
$0<\lambda<(w^{1-q'})_{R^+(\gamma)}$.
Since
\[
\dashint_{R^+(\gamma)} w^{1-q'} \leq M^{\gamma+} (w^{1-q'} \chi_{R^+(\gamma)})(z)
\]
for every $z\in R^-(\gamma)$, we have
\begin{align*}
w(R^-(\gamma)) &\leq w( \{ M^{\gamma+} (w^{1-q'} \chi_{R^+(\gamma)}) > \lambda \}) \\
&\leq \frac{C}{\lambda^q} \int_{\mathbb{R}^{n+1}} \lvert w^{1-q'} \chi_{R^+(\gamma)} \rvert^q \, w
= \frac{C}{\lambda^q} \int_{R^+(\gamma)} w^{1-q'} .
\end{align*}
By letting $\lambda\to (w^{1-q'})_{R^+(\gamma)}$, we obtain
\[
w(R^-(\gamma)) \biggl( \dashint_{R^+(\gamma)} w^{1-q'} \biggr)^q
\leq C \int_{R^+(\gamma)} w^{1-q'},
\]
that is
\[
\biggl( \dashint_{R^-(\gamma)} w \biggr) \biggl( \dashint_{R^+(\gamma)} w^{1-q'} \biggr)^{q-1}
\leq C .
\]
By taking supremum over all parabolic rectangles $R\subset\mathbb{R}^{n+1}$, we conclude that $w\in A^+_q(\gamma)$.

To prove the other direction, we make the following assumptions.
First we may assume $f$ to be bounded and compactly supported function.
We note that it suffices to prove the claim for the following restricted maximal function
\[
M^{\gamma+}_\xi f(x,t) = \sup_{\substack{ R^-(\gamma) \ni (x,t) \\ l(R)\geq \xi } } \dashint_{R^+(\gamma)} \lvert f \rvert, \quad \xi<1,
\]
and then let $\xi \to 0$ to obtain the conclusion.
Next we can reduce the statement to showing that
\begin{align*}
w( \{ \lambda < M^{\gamma+}_\xi f \leq 2\lambda \}) \leq \frac{C}{\lambda^q} \int_{\mathbb{R}^{n+1}} \lvert f \rvert^q \, w ,
\end{align*}
since by summing up we get
\begin{align*}
w( \{ M^{\gamma+}_\xi f > \lambda \}) &\leq
\sum_{k=0}^\infty w( \{ 2^{k} \lambda < M^{\gamma+}_\xi f \leq 2^{k+1} \lambda \}) \\
&\leq
\sum_{k=0}^\infty
\frac{C}{2^{kq} \lambda^q} \int_{\mathbb{R}^{n+1}} \lvert f \rvert^q \, w 
\leq
\frac{2C}{\lambda^q} \int_{\mathbb{R}^{n+1}} \lvert f \rvert^q \, w .
\end{align*}
Moreover, it is sufficient to prove that for $L>0$ we have
\begin{align*}
w( B(0,L) \cap \{ \lambda < M^{\gamma+}_\xi f \leq 2\lambda \}) \leq \frac{C}{\lambda^q} \int_{\mathbb{R}^{n+1}} \lvert f \rvert^q \, w ,
\end{align*}
since we may let $L\to\infty$ to obtain the claim.
Denote
\[
E = B(0,L) \cap \{ \lambda < M^{\gamma+}_\xi f \leq 2\lambda \} .
\]
By Lemma~\ref{lem:trunctation},
it is enough to show the claim for truncations $\max\{w,a\}$, $a>0$.
Hence, assume that $w$ is bounded from below by $a$.

By the definition of $E$,
for every $z \in E$ there exists a parabolic rectangle $R_{z}$ such that 
$z$ is the center point of $R^-_{z}(\gamma)$, $l(R_{z})\geq \xi$ and
\[
\lambda < \dashint_{R^+_{z}(\gamma)} \lvert f \rvert \leq 2 \lambda .
\]
Since $f\in L^1(\mathbb{R}^{n+1})$, we have
\[
\lvert R^+_{z}(\gamma) \rvert < \frac{1}{\lambda} \int_{R^+_{z}(\gamma)} \lvert f \rvert
\leq \frac{1}{\lambda} \int_{\mathbb{R}^{n+1}} \lvert f \rvert
< \infty,
\]
and thus the side length of $R_{z}$ is bounded from above.
Denote $P_z=5R_z$ and consider $P_z^\pm(\alpha)$ with $\alpha = \gamma/5^p$. We observe that the top time coordinates of $P_z^-(\alpha)$ and $R_z^-(\gamma)$ coincide and similarly the bottom time coordinates of $P_z^+(\alpha)$ and $R_z^+(\gamma)$.
In particular, we have $R_z^\pm(\gamma)\subset P_z^\pm(\alpha)$.
Since $\bigcup_{z\in E} P_{z}$ is a bounded set, the absolute continuity of the integral implies that
there is $0<\varepsilon<1$ such that
\begin{align*}
w((1+\varepsilon)P_z^-(\alpha) \setminus P_z^-(\alpha)) \leq a (1-\alpha)\xi^{n+p} \leq a \lvert P_z^-(\alpha) \rvert \leq w(P_z^-(\alpha))
\end{align*}
for every $z\in E$.
This also implies that
\[
w((1+\varepsilon)P_z^-(\alpha)) \leq 2 w(P_z^-(\alpha)) .
\]
Since $E$ is compact, there exists a finite collection of balls $B(z_k, (1-\alpha) \xi^p \varepsilon /2)$ with $z_k = (x_k,t_k) \in E$ such that
\[
E \subset \bigcup_{k} B(z_k, (1-\alpha) \xi^p \varepsilon /2) .
\]

We choose $R_{z_k}$ which top has a largest time coordinate and denote it by $R_{z_{k_1}}$. 
Then we consider $R_{z_k}$ which top has a second largest time coordinate.
If $z_k$ is contained in $P^-_{z_{k_1}}(\alpha)$, we discard $R_{z_k}$. Otherwise, we select $R_{z_k}$ and denote it by $R_{z_{k_2}}$.
Suppose that $R_{z_{k_l}}$, $l=1,\dots,m-1$, have been chosen.
Consider $R_{z_k}$ with a next largest top time coordinate and select it if $z_k$ is not contained in any previously chosen $P^-_{z_{k_l}}(\alpha)$ and denote it by $R_{z_{k_m}}$.
In this manner
we obtain a finite collection $\{ R_{z_{k_l}} \}_l$.
We proceed to the second selection. Denote $R_{z_{k_l}}$ with a largest side length by $R_1$.
Suppose that $R_j$, $j=1,\dots,i-1$, have been chosen.
Then consider $R_{z_{k_l}}$ with a next largest side length and select it if 
$P^-_{z_{k_l}}(\alpha) \nsubseteq P_j^-(\alpha) $ for every $j=1,\dots, i-1$, and denote it by $R_i$.
Hence, we obtain a finite collection $\{R_{i}\}_i$.
Note that $R_i^-(\gamma) \nsubseteq R_j^-(\gamma) $ for $i\neq j$.
It follows that
every $z_k$ is contained in some $P^-_{i}(\alpha)$ and we have
$ B(z_k, (1-\alpha) \xi^p \varepsilon /2) \subset (1+\varepsilon)P^-_{i}(\alpha) $.
This implies that
\[
E \subset \bigcup_{k} B(z_k, (1-\alpha) \xi^p \varepsilon /2)
\subset \bigcup_{i} (1+\varepsilon) P^-_{i}(\alpha) .
\]
Therefore, we obtain
\begin{equation}
\label{eq:superlevelset}
w(E) \leq \sum_i w((1+\varepsilon) P^-_{i}(\alpha)) \leq 2 \sum_i w(P_i^-(\alpha)) .
\end{equation}
By the construction, we observe that for given $k\in\mathbb{Z}$ the corresponding $R_i^-(\gamma)$ of side length $2^{-k-1} < l(R_i) \leq 2^{-k}$ are pairwise disjoint.

Fix $i$ and denote 
\[
J_i = \{j\in\mathbb{N}: R_i^+(\gamma) \cap R_j^+(\gamma) \neq \emptyset, l(R_j) < l(R_i) \} .
\]
Divide $J_i$ into two subcollections
\[
J_i^1 = \{j\in J_i : R_j^+(\gamma) \nsubseteq R_i^+(\gamma) \} 
\quad\text{and}\quad
J_i^2 = \{j\in J_i : R_j^+(\gamma) \subset R_i^+(\gamma) \}.
\]
We first consider $J_i^1$.
Let $2^{-k_0-1}<l(R_i)\leq 2^{-k_0}$ and $2^{-k-1}<l(R_j)\leq 2^{-k}$.
If $R_j^+(\gamma) \nsubseteq R_i^+(\gamma)$, then
$R_j^+(\gamma)$ intersects the boundary of $R_i^+(\gamma)$ and we have
$R_j \subset A_k$ where $A_k$ 
is a set around the boundary of $R_i^+(\gamma)$ such that
\begin{align*}
\lvert A_k \rvert &\leq 
2\bigl(l(R_i) + 2l(R_i)\bigr)^n 2^{-kp+2}
+ 2n \bigl( l(R_i) + 2l(R_i) \bigr)^{n-1} \\
&\qquad \cdot
\bigl( (1-\gamma) l(R_i)^p + 4 l(R_i)^{p}\bigr)
2^{-k+1} 
\\
&\leq 2^{2n+3} \bigl( l(R_i)^n 2^{-kp} + n l(R_i)^{n-1+p} 2^{-k} \bigr)
.
\end{align*}
Since $R_j^-(\gamma)$ of approximately same side length are pairwise disjoint,
we obtain
\begin{align*}
\sum_{j\in J_i^1} \lvert R_j^+(\gamma) \lvert 
&= \sum_{k=k_0}^\infty \sum_{\substack{j\in J_i^1 \\ 2^{-k-1}<l(R_j)\leq 2^{-k} }} \lvert R_j^+(\gamma) \lvert
= \sum_{k=k_0}^\infty \sum_{\substack{j\in J_i^1 \\ 2^{-k-1}<l(R_j)\leq 2^{-k} }} \lvert R_j^-(\gamma) \lvert \\
&\leq 
\sum_{k=k_0}^\infty \lvert A_k \rvert
\leq 2^{2n+3} \biggl( l(R_i)^n \sum_{k=k_0}^\infty 2^{-kp} + n l(R_i)^{n-1+p} \sum_{k=k_0}^\infty 2^{-k} \biggr) \\
&\leq 2^{2n+3} \biggl( l(R_i)^n 2^{-k_0 p+1} + n l(R_i)^{n-1+p} 2^{-k_0+1} \biggr) \\
&\leq 2^{2n+4} \biggl( l(R_i)^n 2^p l(R_i)^p + n l(R_i)^{n-1+p} 2 l(R_i) \biggr) \\
&\leq
\frac{C_1}{1-\gamma} \lvert R_i^+(\gamma) \rvert ,
\end{align*}
where $C_1 = 2^{3n+p+4}$.
We proceed to estimate $J_i^2$.
Let 
\[
\mathcal{F}_{k_0} = \{ j\in J_i^2: 2^{-k_0-1}<l(R_j)\leq 2^{-k_0} \}
\]
and
\[
\mathcal{F}_{k} = \Bigl\{ j\in J_i^2: 2^{-k-1}<l(R_j)\leq 2^{-k}, R_j^-(\gamma) \cap \bigcup_{l=k_0}^{k-1} \bigcup_{m \in \mathcal{F}_l } R_m^-(\gamma) = \emptyset \Bigr\},
\quad k>k_0.
\]
In particular, we select those rectangles to $\mathcal{F}_k$ that 
are approximately of side length $2^{-k}$ and do not intersect the previously chosen rectangles.
Define $\mathcal{F} = \bigcup_{k=k_0}^\infty \mathcal{F}_k$.
For every $m\in J_i^2 \setminus \mathcal{F} $, there exists $j\in \mathcal{F}$ such that $R_j^-(\gamma) \cap R_m^-(\gamma) \neq \emptyset$ and $l(R_m)< l(R_j)$.
Moreover, recall that $R_m^-(\gamma) \nsubseteq R_j^-(\gamma) $ by the selection process.
Thus, we have
\begin{align*}
\sum_{j\in J_i^2} \lvert R_j^+(\gamma) \rvert \leq \sum_{j\in \mathcal{F}} \biggl( \lvert R_j^+(\gamma) \rvert +  
\sum_{m\in\mathcal{G}_j} \lvert R_m^+(\gamma) \rvert
\biggr) ,
\end{align*}
where
\[
\mathcal{G}_j = \{ m \in J_i^2 \setminus \mathcal{F}: R_j^-(\gamma) \cap R_m^-(\gamma) \neq \emptyset, R_m^-(\gamma) \nsubseteq R_j^-(\gamma) , l(R_m) < l(R_j) \} .
\]
Then a similar argument as for $J_i^1$ can be applied for the second sum above.
More precisely,
let $2^{-k_0-1}<l(R_j)\leq 2^{-k_0}$ and $2^{-k-1}<l(R_m)\leq 2^{-k}$.
If $R_m^-(\gamma) \nsubseteq R_j^-(\gamma)$, then
$R_m^-(\gamma)$ intersects the boundary of $R_j^-(\gamma)$ and we have
$R_m^-(\gamma) \subset A_k$ where $A_k$ 
is a set around the boundary of $R_j^-(\gamma)$ such that
\begin{align*}
\lvert A_k \rvert 
&\leq
2\bigl( l(R_j) + 2l(R_j) \bigr)^n (1-\gamma) 2^{-kp+1}
+2n \bigl( l(R_j) + 2l(R_j) \bigr)^{n-1} \\
&\qquad \cdot
\bigl( (1-\gamma) l(R_j)^{p} + 2(1-\gamma) l(R_j)^{p} \bigr)
2^{-k+1} \\
&\leq 
(1-\gamma) 2^{2n+2} \bigl( l(R_j)^n 2^{-kp} + n l(R_j)^{n-1+p} 2^{-k} \bigr) .
\end{align*}
We have
\begin{align*}
\sum_{m\in \mathcal{G}_j} \lvert R_m^+(\gamma) \lvert 
&= \sum_{k=k_0}^\infty \sum_{\substack{m\in \mathcal{G}_j \\ 2^{-k-1}<l(R_m)\leq 2^{-k} }} \lvert R_m^+(\gamma) \lvert
= \sum_{k=k_0}^\infty \sum_{\substack{m\in \mathcal{G}_j \\ 2^{-k-1}<l(R_m)\leq 2^{-k} }} \lvert R_m^-(\gamma) \lvert \\
&\leq 
\sum_{k=k_0}^\infty \lvert A_k \rvert
\leq 
2^{2n+2} (1-\gamma) \biggl( l(R_j)^n \sum_{k=k_0}^\infty 2^{-kp} + n l(R_j)^{n-1+p} \sum_{k=k_0}^\infty 2^{-k} 
\biggr) \\
&\leq 2^{2n+2} (1-\gamma) \biggl( l(R_j)^n 2^{-k_0 p +1} + n l(R_j)^{n-1+p} 2^{-k_0+1} \biggr) \\
&\leq 2^{2n+3} (1-\gamma) \biggl( l(R_j)^n 2^p l(R_j)^p + n l(R_j)^{n-1+p} 2 l(R_j) \biggr) \\
&\leq
\frac{C_1}{2} \lvert R_j^+(\gamma) \rvert .
\end{align*}
Observe that $R_j^+(\gamma) \subset R_i^+(\gamma)$ implies $R_j^-(\gamma) \subset R_i$.
By the previous estimate, the definition of $\mathcal{F}$ and $\mathcal{F}_k$
and the pairwise disjointness of $R_j^-(\gamma)$ in $\mathcal{F}_k$, 
we obtain
\begin{align*}
\sum_{j\in J_i^2} \lvert R_j^+(\gamma) \rvert &\leq \sum_{j\in \mathcal{F}} \biggl( \lvert R_j^+(\gamma) \rvert +  
\sum_{m \in \mathcal{G}_j} \lvert R_m^+(\gamma) \rvert
\biggr)
\leq
\sum_{j\in \mathcal{F}} \biggl( \lvert R_j^+(\gamma) \rvert +  
\frac{C_1}{2} \lvert R_j^+(\gamma) \rvert
\biggr) \\
&\leq C_1 \sum_{k=k_0}^\infty \sum_{j\in \mathcal{F}_k} \lvert R_j^-(\gamma) \rvert 
\leq C_1 \sum_{k=k_0}^\infty 
\Bigl\lvert \bigcup_{j\in \mathcal{F}_k} R_{j}^-(\gamma) \Bigr\rvert 
\\
&= C_1 \Bigl\lvert \bigcup_{k=k_0}^\infty \bigcup_{j\in \mathcal{F}_k} R_{j}^-(\gamma) \Bigr\rvert 
\leq
C_1 \Bigl\lvert \bigcup_{j\in J_i^2} R_{j}^-(\gamma) \Bigr\rvert
\\
&\leq C_1 \lvert R_i \rvert 
= \frac{2C_1}{1-\gamma} \lvert R_i^+(\gamma) \rvert .
\end{align*}
Combining the estimates for $J_i^1$ and $J_i^2$, we get
\begin{align*}
\sum_{j\in J_i} \lvert R_j^+(\gamma) \rvert &= \sum_{j\in J_i^1} \lvert R_j^+(\gamma) \rvert + \sum_{j\in J_i^2} \lvert R_j^+(\gamma) \rvert 
\leq \frac{C_1}{1-\gamma} \lvert R_i^+(\gamma) \rvert + \frac{2C_1}{1-\gamma} \lvert R_i^+(\gamma) \rvert \\
&\leq \frac{2^{3n+p+6} }{1-\gamma} \lvert R_i^+(\gamma) \rvert
\leq \frac{c}{2} \lvert R_i^+(\gamma) \rvert ,
\end{align*}
where $c = \lceil 2^{3n+p+7} / (1-\gamma) \rceil$.
Then we have
\begin{align*}
\sum_{j\in J_i} \int_{R_j^+(\gamma)} \lvert f \rvert \leq 2\lambda \sum_{j\in J_i} \lvert R_j^+(\gamma) \rvert \leq c \lambda \lvert R_i^+(\gamma) \rvert
\leq c \int_{R_i^+(\gamma)} \lvert f \rvert .
\end{align*}

We extract from $\{R_i^+(\gamma)\}_i$ a collection of subsets $F_i$ that have bounded overlap.
Fix $i$ and denote the number of indices in $J_i$ by $N$.
If $N\leq 2c$, we choose $F_i = R_i^+(\gamma)$.
Otherwise, if $N>2c$, we define
\[
G_i^k = \{z\in R_i^+(\gamma): \sum_{j\in J_i} \chi_{R_j^+(\gamma)}(z) \geq k \}, \quad k\in\mathbb{N}.
\]
For $k_1 < k_2$, we have $G_i^{k_2} \subset G_i^{k_1} $.
Moreover, observe that 
\begin{align*}
\sum_{k=1}^N \chi_{G_i^k}(z) = \sum_{j\in J_i} \chi_{R_j^+(\gamma)}(z) .
\end{align*}
Then it follows that
\begin{align*}
2 c \int_{G_i^{2c}} \lvert f \rvert &\leq N \int_{G_i^{2c}} \lvert f \rvert \leq \sum_{k=1}^N \int_{G_i^k} \lvert f \rvert = \int_{R_i^+(\gamma)} \lvert f \rvert \sum_{k=1}^N \chi_{G_i^k} \\
&= \int_{R_i^+(\gamma)} \lvert f \rvert \sum_{j\in J_i} \chi_{R_j^+(\gamma)}
\leq \sum_{j\in J_i} \int_{R_j^+(\gamma)} \lvert f \rvert
\leq c \int_{R_i^+(\gamma)} \lvert f \rvert .
\end{align*}
Define $F_i = R_i^+(\gamma) \setminus G_i^{2c}$ for which we have
\begin{align*}
\int_{F_i} \lvert f \rvert = \int_{R_i^+(\gamma)} \lvert f \rvert - \int_{G_i^{2c}} \lvert f \rvert \geq \frac{1}{2} \int_{R_i^+(\gamma)} \lvert f \rvert > \frac{\lambda}{2} \lvert R_i^+(\gamma) \rvert .
\end{align*}
Thus, for every $F_i$ it holds that
\begin{equation}
\label{eq:intavgFi}
\frac{2}{\lvert R_i^+(\gamma) \rvert} \int_{F_i} \lvert f \rvert > \lambda .
\end{equation}
Moreover, for every $z\in F_{i}$ we have 
\begin{equation}
\label{eq:bddoverlapFi}
\sum_{j\in J_{i}} \chi_{R_j^+(\gamma)}(z) \leq 2c .
\end{equation}

Next we observe that $R_i^+(\gamma)$ of approximately same side length have bounded overlap. 
Fix $z\in\mathbb{R}^{n+1}$ and denote
\[
I_z^k = \{i\in\mathbb{N}: 2^{-k-1} < l(R_i) \leq 2^{-k}, z\in R_i^+(\gamma) \}, \quad k\in\mathbb{Z}.
\]
Then we have
\[
\bigcup_{i \in I_z^k} 
R_i^-(\gamma)
\subset 
S_z
,
\]
where 
$S_z$ is a rectangle centered at $z$ with side lengths $l_x(S_z) = 2^{-k+1}$ and $l_t(S_z) = 2^{-kp+1} $.
Since $R_i^-(\gamma)$ are pairwise disjoint,
we obtain 
\begin{align*}
\sum_{\substack{ i \\ 2^{-k-1} < l(R_i) \leq 2^{-k}}} 
\chi_{R_i^+(\gamma)}(z) 
&= 
\sum_{i\in I_z^k}
\frac{1}{(1-\gamma) l(R_i)^{n+p} } \lvert R_i^-(\gamma) \rvert \\
&\leq
\frac{2^{(k+1)(n+p)}}{1-\gamma } 
\Bigl\lvert 
\bigcup_{i\in I_z^k}
R_i^-(\gamma) \Bigr\rvert 
\leq\frac{2^{(k+1)(n+p)}}{1-\gamma } 
\lvert S_z \rvert \\
&=\frac{2^{(k+1)(n+p)}}{1-\gamma } 
2^{(-k+1)n} 2^{-kp+1}
= \frac{2^{2n+p+1}}{1-\gamma}  .
\end{align*}
We show that $F_i$ have a bounded overlap.
Fix $z\in \bigcup_i F_i$.
Consider a rectangle $R_{i_0} \in \{R_i\}_i$ with a largest side length such that $z\in F_{i_0}$.
Let $2^{-k_0-1}< l(R_{i_0}) \leq 2^{-k_0}$.
By the previous estimate and~\eqref{eq:bddoverlapFi}, we have
\begin{align*}
\sum_i \chi_{F_i}(z) 
&= \sum_{\substack{i \\ 2^{-k_0-1}< l(R_i) \leq 2^{-k_0} }} \chi_{F_i}(z) 
+ \sum_{\substack{i \\ l(R_i)\leq 2^{-k_0-1} }} \chi_{F_i}(z) \\
&\leq \sum_{\substack{i \\ 2^{-k_0-1}< l(R_i)\leq 2^{-k_0} }} \chi_{R_i^+(\gamma)}(z)
+ \sum_{i \in J_{i_0}} \chi_{R_i^+(\gamma)}(z) \\
&\leq 
\frac{2^{2n+p+1}}{1-\gamma} + 2c \leq C_2 ,
\end{align*}
where $C_2 = 2^{3n+p+9} /(1-\gamma) $.

We first consider the case $1<q<\infty$.
By~\eqref{eq:superlevelset},~\eqref{eq:intavgFi}, H\"older's inequality,
Theorem~\ref{thm:timelagchange} with the constant $C$
and the bounded overlap of $F_i$, we conclude that
\begin{align*}
w(E) &\leq 
2 \sum_i w(P_i^-(\alpha))
\leq \frac{2^{q+1}}{\lambda^q} \sum_i w(P_i^-(\alpha)) \biggl( \frac{1}{\lvert R_i^+(\gamma) \rvert} \int_{F_i} \lvert f \rvert \biggr)^q \\
&\leq \frac{2^{q+1}}{\lambda^q} \sum_i  \frac{w(P_i^-(\alpha))}{\lvert R_i^+(\gamma) \rvert^q} \biggl( \int_{F_i} w^{-\frac{1}{q-1}} \biggr)^{q-1} \int_{F_i} \lvert f \rvert^q \, w \\
&\leq \frac{2^{q+1}}{\lambda^q} 
\biggl( \frac{1-\alpha}{1-\gamma} 5^{n+p} \biggr)^q
\sum_i \dashint_{P_i^-(\alpha)} w  \biggl( \dashint_{P_i^+(\alpha)} w^{-\frac{1}{q-1}} \biggr)^{q-1} \int_{F_i} \lvert f \rvert^q \, w \\
&\leq 
\frac{C_3 C}{\lambda^q} 
\sum_i \int_{F_i} \lvert f \rvert^q \, w
\leq
\frac{C_2 C_3 C}{\lambda^q} 
\int_{\mathbb{R}^{n+1}} \lvert f \rvert^q \, w ,
\end{align*}
where $C_3 = 2^{q+1} 5^{q(n+p)} / (1-\gamma)^q$.
For the case $q=1$, we use~\eqref{eq:superlevelset},~\eqref{eq:intavgFi}, 
Theorem~\ref{thm:timelagchange} with the constant $C$
and the bounded overlap of $F_i$ to get
\begin{align*}
w(E) &\leq 
2 \sum_i w(P_i^-(\alpha))
\leq \frac{4}{\lambda} \sum_i \frac{w(P_i^-(\alpha))}{\lvert R_i^+(\gamma) \rvert} \int_{F_i} \lvert f \rvert \\
&= 
\frac{4}{\lambda} 
\frac{1-\alpha}{1-\gamma} 5^{n+p}
\sum_i \int_{F_i} \lvert f \rvert \, w_{P_i^-(\alpha)} \\
&\leq
\frac{C_3 C}{\lambda} 
\sum_i \int_{F_i} \lvert f \rvert \, w 
\leq
\frac{C_2 C_3 C}{\lambda} 
\int_{\mathbb{R}^{n+1}} \lvert f \rvert \, w .
\end{align*}
Observe that in the case $\gamma=0$, we do not need to use Theorem~\ref{thm:timelagchange}.
This completes the proof.
\end{proof}

Using the self-improving property of the parabolic Muckenhoupt weights, we obtain the strong type estimate for the uncentered parabolic maximal function.

\begin{theorem}
\label{thm:strongtype}
Let $1<q<\infty$, $0<\gamma<1$ and $w$ be a weight.
Then $w \in A^+_{q}(\gamma)$ if and only if
there exists a constant $C=C(n,p,q,\gamma,[w]_{A^+_q(\gamma)})$ such that
\[
\int_{\mathbb{R}^{n+1}} (M^{\gamma+}f)^q \, w \leq C \int_{\mathbb{R}^{n+1}} \lvert f \rvert^q \, w 
\]
for every $f\in L^1_{\mathrm{loc}}(\mathbb{R}^{n+1})$.
\end{theorem}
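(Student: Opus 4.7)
The plan is to prove both directions. Necessity is immediate: if the strong type $(q,q)$ inequality holds, then Chebyshev's inequality gives
\[
w(\{M^{\gamma+} f > \lambda\}) \leq \frac{1}{\lambda^q} \int_{\mathbb{R}^{n+1}} (M^{\gamma+} f)^q \, w \leq \frac{C}{\lambda^q} \int_{\mathbb{R}^{n+1}} \lvert f \rvert^q \, w,
\]
which is the weak type inequality, and Theorem~\ref{thm:weaktype} then yields $w \in A^+_q(\gamma)$.

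For the sufficiency, which is the main direction, I will combine the self-improving property of parabolic Muckenhoupt weights with a Marcinkiewicz-type interpolation between a weak type bound at a smaller exponent and the trivial $L^\infty$ bound. Since $w \in A^+_q(\gamma)$ and $0 < \gamma < 1$, Corollary~\ref{cor:Aqselfimprove} produces an $\varepsilon > 0$ with $w \in A^+_{q-\varepsilon}(\gamma)$, and Theorem~\ref{thm:weaktype} then asserts that $M^{\gamma+}$ is of weak type $(q-\varepsilon, q-\varepsilon)$ with respect to the measure $w \, dx \, dt$, with constant depending only on $n, p, q, \gamma$ and $[w]_{A^+_q(\gamma)}$ (via Corollary~\ref{cor:Aqselfimprove}).

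To upgrade this weak type estimate at exponent $q-\varepsilon$ to strong type at exponent $q$, I will carry out the standard truncation argument directly. For each $\lambda > 0$, decompose $f = f^1_\lambda + f^2_\lambda$ with $f^1_\lambda = f \chi_{\{\lvert f \rvert > \lambda/2\}}$. Since $M^{\gamma+} f^2_\lambda \leq \lVert f^2_\lambda \rVert_{L^\infty} \leq \lambda/2$ pointwise, sublinearity gives $\{M^{\gamma+} f > \lambda\} \subset \{M^{\gamma+} f^1_\lambda > \lambda/2\}$. Combining Cavalieri's principle, the weak type bound applied to $f^1_\lambda$, and Fubini's theorem on the resulting double integral in $\lambda$ and $(x,t)$, one obtains
\[
\int_{\mathbb{R}^{n+1}} (M^{\gamma+} f)^q \, w \leq C \int_{\mathbb{R}^{n+1}} \lvert f \rvert^{q-\varepsilon} \biggl( \int_0^{2\lvert f \rvert} \lambda^{\varepsilon-1} \, d\lambda \biggr) w = \frac{2^\varepsilon C}{\varepsilon} \int_{\mathbb{R}^{n+1}} \lvert f \rvert^q \, w.
\]
I do not anticipate serious obstacles here: the self-improving property is doing the essential work, and the remainder is a routine distribution function computation. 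The only point that demands some care is to track the dependence of constants throughout, so that the final constant depends only on $n, p, q, \gamma$ and $[w]_{A^+_q(\gamma)}$; this follows from the analogous dependencies in Corollary~\ref{cor:Aqselfimprove} and Theorem~\ref{thm:weaktype}.
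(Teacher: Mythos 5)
Your proposal is correct and follows essentially the same route as the paper: necessity via the weak type estimate and Theorem~\ref{thm:weaktype}, and sufficiency by combining the self-improvement from Corollary~\ref{cor:Aqselfimprove} with the weak $(q-\varepsilon,q-\varepsilon)$ bound and the trivial $L^\infty$ bound. The only cosmetic difference is that you write out the Marcinkiewicz truncation argument by hand while the paper simply cites the interpolation theorem.
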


\begin{proof}
Assume that the strong type estimate holds.
Since it implies the weak type estimate in Theorem~\ref{thm:weaktype}, we have $w \in A^+_{q}(\gamma)$.

We show the reverse implication.
Corollary~\ref{cor:Aqselfimprove} and Theorem~\ref{thm:weaktype} 
imply that there exist $\varepsilon>0$ and $C>0$ such that
\[
w( \{ M^{\gamma+} f > \lambda \}) \leq
\frac{C}{\lambda^{q-\varepsilon}} \int_{\mathbb{R}^{n+1}} \lvert f \rvert^{q-\varepsilon} \, w .
\]
Moreover, observe that
\[
\norm{M^{\gamma+} f}_{L^\infty(w)} \leq \norm{f}_{L^\infty(w)} .
\]
In other words, $M^{\gamma+}f$ is bounded from $L^{q-\varepsilon}(w)$ to $L^{q-\varepsilon,\infty}(w)$ and from $L^\infty(w)$ to $L^{\infty}(w)$.
The Marcinkiewicz interpolation theorem implies that $M^{\gamma+}f$ is bounded from $L^q(w)$ to $L^{q}(w)$, that is
\begin{align*}
\int_{\mathbb{R}^{n+1}} (M^{\gamma+}f)^q \, w &\leq
 \frac{ q}{q-(q-\varepsilon)}  2^q  C
\int_{\mathbb{R}^{n+1}} \lvert f \rvert^q \, w
=
\frac{q 2^q  C}{\varepsilon} 
\int_{\mathbb{R}^{n+1}} \lvert f \rvert^q \, w .
\end{align*}
\end{proof}

\section{Factorization and characterization of parabolic Muckenhoupt weights}
\label{sec:A1theory}

The following theorem is a
Jones factorization for parabolic Muckenhoupt weights. 
This extends \cite[Theorem~6.3]{kinnunenSaariParabolicWeighted} to a full range of the time lag.

\begin{theorem}
\label{thm:jones}
Let $0<\gamma<1$. A weight $w$ is in $A_q^+(\gamma)$ if and only if $w=uv^{1-q}$, where $u\in A_1^+(\gamma)$ and $v\in A_1^-(\gamma)$.
\end{theorem}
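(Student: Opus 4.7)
The sufficiency direction is a direct pointwise computation. Assume $u \in A_1^+(\gamma)$ and $v \in A_1^-(\gamma)$ and set $w := u v^{1-q}$. On $R^-(\gamma)$ the $A_1^-(\gamma)$ bound gives $v \geq [v]_{A_1^-(\gamma)}^{-1} \dashint_{R^+(\gamma)} v$, hence $v^{1-q} \leq [v]_{A_1^-(\gamma)}^{q-1} \bigl(\dashint_{R^+(\gamma)} v\bigr)^{1-q}$ pointwise; combining with $\dashint_{R^-(\gamma)} u \leq [u]_{A_1^+(\gamma)} \essinf_{R^+(\gamma)} u$ bounds $\dashint_{R^-(\gamma)} w$. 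On $R^+(\gamma)$, $u^{1/(1-q)} \leq (\essinf_{R^+(\gamma)} u)^{1/(1-q)}$ controls $\dashint_{R^+(\gamma)} w^{1/(1-q)}$. Multiplying yields $[w]_{A_q^+(\gamma)} \leq [u]_{A_1^+(\gamma)} [v]_{A_1^-(\gamma)}^{q-1}$.

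For the necessity direction, I will implement a Rubio de Francia style argument in $L^q(w)$. Consider the positive, monotone, positively homogeneous operator
\begin{equation*}
Tf := M^{\gamma+} f + w^{-1/(q-1)} \bigl(M^{\gamma-}(f^{q-1} w)\bigr)^{1/(q-1)}, \qquad f \geq 0,
\end{equation*}
designed so that any pointwise bound $Tv \lesssim v$ recovers both $A_1$ conditions simultaneously: $M^{\gamma+} v \lesssim v$ and $M^{\gamma-}(v^{q-1} w) \lesssim v^{q-1} w$. Its first summand is bounded on $L^q(w)$ by Theorem~\ref{thm:strongtype}. For the second summand, writing $\sigma := w^{1-q'}$, the identity
\begin{equation*}
\int \Bigl(w^{-1/(q-1)} \bigl(M^{\gamma-} g\bigr)^{1/(q-1)}\Bigr)^q w = \int \bigl(M^{\gamma-} g\bigr)^{q'} \sigma ,
\end{equation*}
combined with Lemma~\ref{lem:dualityAp} (giving $\sigma \in A_{q'}^-(\gamma)$), the time-reversed analogue of Theorem~\ref{thm:strongtype} (so that $M^{\gamma-}$ is bounded on $L^{q'}(\sigma)$), and the identity $\int (f^{q-1} w)^{q'} \sigma = \int f^q w$ applied with $g = f^{q-1} w$, yields boundedness. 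Hence $T$ is bounded on $L^q(w)$ with some norm $\|T\|$.

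Pick any strictly positive $\phi \in L^q(w)$ and set $\varepsilon := (2\|T\|)^{-1}$. The iteration $v_0 := \phi$, $v_{n+1} := \phi + \varepsilon T v_n$ is monotonically increasing by monotonicity of $T$; the bound $\|v_{n+1}\|_{L^q(w)} \leq \|\phi\|_{L^q(w)} + \tfrac{1}{2}\|v_n\|_{L^q(w)}$ yields $\|v_n\|_{L^q(w)} \leq 2\|\phi\|_{L^q(w)}$ by induction, so monotone convergence produces $v_n \uparrow v \in L^q(w)$, and the monotone convergence property of $M^{\gamma\pm}$ delivers $Tv_n \uparrow Tv$. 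Passing to the limit gives the fixed-point relation $v = \phi + \varepsilon Tv$, whence $Tv \leq \varepsilon^{-1} v = 2\|T\| v$ almost everywhere. From $M^{\gamma+} v \leq Tv \leq 2\|T\| v$ and Proposition~\ref{thm:maximalA1-cond} it follows that $v \in A_1^-(\gamma)$; the inequality for the second summand rearranges to $M^{\gamma-}(v^{q-1} w) \leq (2\|T\|)^{q-1} v^{q-1} w$, so $u := v^{q-1} w \in A_1^+(\gamma)$, and $u v^{1-q} = w$ completes the factorization. The principal obstacle is the $L^q(w)$-boundedness of the nonlinear second summand, which relies crucially on the duality Lemma~\ref{lem:dualityAp} together with the strong-type inequality; the monotone fixed-point iteration then elegantly bypasses the sublinearity subtleties that would otherwise complicate the classical Rubio de Francia construction for $q \neq 2$.
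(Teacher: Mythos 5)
Your sufficiency argument matches the paper's and is fine. Your necessity argument is also correct, but it takes a meaningfully different route than the paper's. The paper conjugates by powers of $w$ and works with the unweighted operator
\[
Tf=\bigl(w^{-1/q}M^{\gamma-}(w^{1/q}f^{q-1})\bigr)^{1/(q-1)}+w^{1/q}M^{\gamma+}(w^{-1/q}f)
\]
on plain $L^q$, runs the classical geometric series $\eta=\sum_{k\geq1}(2c)^{-k}T^kf$, and therefore needs $T$ to be subadditive. Since $t\mapsto t^{1/(q-1)}$ is subadditive only when $q\geq2$, the paper must split cases and handle $1<q<2$ by the duality Lemma~\ref{lem:dualityAp}. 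Your operator lives directly in $L^q(w)$, and, more importantly, you replace the series by the monotone fixed-point iteration $v_{n+1}=\phi+\varepsilon Tv_n$, which only needs monotonicity, positive homogeneity, continuity from below of $M^{\gamma\pm}$ under increasing limits, and the triangle inequality in $L^q(w)$ --- not subadditivity of $T$. This lets you treat all $1<q<\infty$ uniformly and dispense with the case split. Both proofs ultimately draw on the same two ingredients, the weighted strong type bound (Theorem~\ref{thm:strongtype}, applied also in its time-reversed form via Lemma~\ref{lem:dualityAp}) and Proposition~\ref{thm:maximalA1-cond}.

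Two small points worth spelling out. First, you should note why a strictly positive $\phi\in L^q(w)$ exists; this is standard (e.g.\ sum suitably normalized indicators of bounded rectangles). Second, you should confirm that $u=v^{q-1}w$ and $v$ are genuinely weights, i.e.\ nonnegative, finite a.e.\ and locally integrable: $v\in L^q(w)$ with $w>0$ a.e.\ (forced by the $A_q^+(\gamma)$ condition) gives $0<v<\infty$ a.e., and the relations $M^{\gamma+}v\leq Cv$, $M^{\gamma-}u\leq Cu$ then immediately yield local integrability of $v$ and $u$ by testing on any fixed parabolic rectangle. With these remarks made, the argument is complete.
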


\begin{proof}
Assume first that $u\in A_1^+(\gamma)$ and $v\in A_1^-(\gamma)$. Let $R\subset\mathbb{R}^{n+1}$ be a parabolic rectangle.
By the definition of $A_1^+(\gamma)$ and $A_1^-(\gamma)$, we have
\[
u(x,t)^{-1} 
\leq \biggl( \essinf_{(y,s)\in R^+(\gamma)} u(y,s) \biggr)^{-1}
\leq [u]_{A^+_1(\gamma)} \biggl( \dashint_{R^-(\gamma)} u \biggr)^{-1}
\]
for almost every $(x,t)\in R^+(\gamma)$, and
\[
v(x,t)^{-1}
\leq \biggl( \essinf_{(y,s)\in R^-(\gamma)} v(y,s) \biggr)^{-1}
\leq [v]_{A^-_1(\gamma)} \biggl( \dashint_{R^+(\gamma)} v \biggr)^{-1}
\]
for almost every $(x,t)\in R^-(\gamma)$.
By using these estimates, we obtain
\begin{align*}
&\biggl( \dashint_{R^-(\gamma)} uv^{1-q} \biggr) \biggl( \dashint_{R^+(\gamma)} (uv^{1-q})^{\frac{1}{1-q}} \biggr)^{q-1}
= \biggl( \dashint_{R^-(\gamma)} uv^{1-q} \biggr) \biggl( \dashint_{R^+(\gamma)} u^{\frac{1}{1-q}} v \biggr)^{q-1} \\
&\qquad\leq  [v]_{A^-_1(\gamma)} \biggl( \dashint_{R^+(\gamma)} v \biggr)^{1-q} \biggl( \dashint_{R^-(\gamma)} u \biggr)  [u]_{A^+_1(\gamma)} \biggl( \dashint_{R^-(\gamma)} u \biggr)^{-1}
\biggl( \dashint_{R^+(\gamma)} v \biggr)^{q-1} \\
&\qquad= [u]_{A^+_1(\gamma)} [v]_{A^-_1(\gamma)} .
\end{align*}
This shows that $uv^{1-q} \in A_q^+(\gamma)$.

Next we prove the other direction. Let $w\in A_q^+(\gamma)$ with $q\geq2$. Define the operator $T$ 
for nonnegative functions $f$
as follows
\[
Tf = \big(w^{-\frac{1}{q}} M^{\gamma-} ( w^\frac{1}{q}f^{q-1} ) \big)^\frac{1}{q-1} + w^\frac{1}{q} M^{\gamma+} (w^{-\frac{1}{q}} f) .
\]
If $f\in L^{q}$, then $w^\frac{1}{q}f^{q-1} \in L^{q'}(w^{1-q'})$ and $w^{-\frac{1}{q}} f \in L^q(w) $.
Note that $w^{1-q'}  \in A_{q'}^-(\gamma)$
by Lemma~\ref{lem:dualityAp}.
The operator $T$ is bounded on $L^q$,
since the boundedness of $M^{\gamma-}$ on $L^{q'}(w^{1-q'})$ and $M^{\gamma+}$ on $L^q(w)$ (see Theorem~\ref{thm:strongtype})
imply
\begin{align*}
\int_{\mathbb{R}^{n+1}} \bigl(w^{-\frac{1}{q}} M^{\gamma-} ( w^\frac{1}{q}f^{q-1} ) \bigr)^\frac{q}{q-1} &= \int_{\mathbb{R}^{n+1}} \big( M^{\gamma-} ( w^\frac{1}{q}f^{q-1} ) \big)^{q'} w^{1-q'} \\
&\leq c_1 \int_{\mathbb{R}^{n+1}} \lvert w^\frac{1}{q}f^{q-1} \rvert^{q'} w^{1-q'} = c_1 \int_{\mathbb{R}^{n+1}} \lvert f \rvert^{q}
\end{align*}
and
\begin{align*}
\int_{\mathbb{R}^{n+1}} \bigl( w^\frac{1}{q} M^{\gamma+} (w^{-\frac{1}{q}} f) \bigr)^q 
&= \int_{\mathbb{R}^{n+1}} \bigl( M^{\gamma+} (w^{-\frac{1}{q}} f) \bigr)^q w\\
& \leq c_2 \int_{\mathbb{R}^{n+1}} \lvert w^{-\frac{1}{q}} f \rvert^q w = c_2 \int_{\mathbb{R}^{n+1}} \lvert f \rvert^q .
\end{align*}
Moreover, since $q\geq2$, we observe that $T$ is subadditive.
Thus, we have
\[
\norm{Tf}_{L^q} \leq c \norm{f}_{L^q} \quad\text{and}\quad T(f+g)\leq Tf + Tg
\]
for every nonnegative $f,g \in L^q$.

Fix a nonnegative $f \in L^q$ and set
\[
\eta = \sum_{k=1}^\infty (2c)^{-k} T^k f ,
\]
where $T^k = T^{k-1} \circ T$ is the iterated operator.
Since
\[
\norm{\eta}_{L^q} \leq \sum_{k=1}^\infty (2c)^{-k} \lVert T^k f \rVert_{L^q} \leq \sum_{k=1}^\infty (2c)^{-k} c^k \norm{f}_{L^q} = \norm{f}_{L^q} ,
\]
the series in the definition of $\eta$ converges absolutely and by the completeness of $L^q$ we have $\eta\in L^q$.
By the properties of $T$, we get
\begin{align*}
T\eta \leq \sum_{k=1}^\infty (2c)^{-k} T^{k+1} f = \sum_{k=2}^\infty (2c)^{1-k} T^k f \leq 2c\eta .
\end{align*}
By defining $u = w^\frac{1}{q} \eta^{q-1}$ and $v = w^{-\frac{1}{q}} \eta$,
it holds that $w = uv^{1-q}$,
\begin{align*}
M^{\gamma-} u = M^{\gamma-}( w^\frac{1}{q} \eta^{q-1} ) \leq w^\frac{1}{q} (T\eta)^{q-1} \leq w^\frac{1}{q} (2c\eta)^{q-1} = (2c)^{q-1} u
\end{align*}
and
\begin{align*}
M^{\gamma+} v = M^{\gamma+} (w^{-\frac{1}{q}} \eta) \leq w^{-\frac{1}{q}} T\eta \leq w^{-\frac{1}{q}} 2c\eta = 2c v .
\end{align*}
Hence, by Proposition~\ref{thm:maximalA1-cond}, we conclude that $u\in A_1^+(\gamma)$ and $v\in A_1^-(\gamma)$.

It is left to consider the case $1<q<2$. 
By Lemma~\ref{lem:dualityAp},
we know that $w^{1-q'}\in A_{q'}^-(\gamma)$.
Thus, we may apply the argument above to obtain $u\in A_1^-(\gamma)$ and $v\in A_1^+(\gamma)$ such that 
$w^{1-q'} = uv^{1-q'}$,
which is equivalent with $w = v u^{1-q}$.
This finishes the proof.
\end{proof}

The next theorem is a Coifman--Rochberg characterization for parabolic Muckenhoupt weights. 
This extends \cite[Lemma~6.4]{kinnunenSaariParabolicWeighted} to a full range of the time lag with the uncentered parabolic maximal function.
Part $(i)$ also holds for $\gamma=0$ and $p=1$.

\begin{theorem}
\label{thm:coifmanrochberg}
Let $0<\gamma<1$.
\begin{enumerate}[(i)]
\item Assume that $f\in L^1_{\mathrm{loc}}(\mathbb{R}^{n+1})$ and $M^{\gamma-}f<\infty$ almost everywhere. Let $0<\delta<1$. Then $(M^{\gamma-}f)^\delta$ is an $A_1^+(\gamma)$ weight with 
$[w]_{A_1^+(\gamma)}$
depending only on $n$ and $\delta$.
\item Assume that $w\in A_1^+(\gamma)$. Then there exist $f\in L^1_{\mathrm{loc}}(\mathbb{R}^{n+1})$, $0<\delta<1$ and $b$ with $b,\frac{1}{b}\in L^\infty$ such that $w=b (M^{\gamma-}f)^\delta$ almost everywhere.
\end{enumerate}
\end{theorem}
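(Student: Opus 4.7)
For part $(i)$, I adapt the classical Coifman--Rochberg argument. By Proposition~\ref{thm:maximalA1-cond}, it suffices to prove $M^{\gamma-}((M^{\gamma-}f)^\delta)(x,t)\leq C(M^{\gamma-}f(x,t))^\delta$ for almost every $(x,t)$. Fix $(x,t)$ and a parabolic rectangle $R$ with $R^+(\gamma)\ni(x,t)$ and split $f=f_1+f_2$ where $f_1=f\chi_{R^-(\gamma)}$. By subadditivity, $(M^{\gamma-}f)^\delta\leq(M^{\gamma-}f_1)^\delta+(M^{\gamma-}f_2)^\delta$. For the local piece, the weak-type $(1,1)$ estimate for $M^{\gamma-}$ from Theorem~\ref{thm:weaktype} (applied with $q=1$, $w\equiv1$) combined with Kolmogorov's inequality yields
\[
\dashint_{R^-(\gamma)}(M^{\gamma-}f_1)^\delta \leq C_\delta\biggl(\dashint_{R^-(\gamma)}|f|\biggr)^\delta \leq C_\delta(M^{\gamma-}f(x,t))^\delta,
\]
the last step using $(x,t)\in R^+(\gamma)$. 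For the non-local piece, any rectangle $R'$ contributing to $M^{\gamma-}f_2(y,s)$ at a point $(y,s)\in R^-(\gamma)$ must satisfy $R'^-(\gamma)\not\subset R^-(\gamma)$, which forces $R'$ to be sufficiently large that a slight spatial and temporal dilation produces a rectangle $R'''$ with $(x,t)\in R'''^+(\gamma)$, $R'''^-(\gamma)\supset R'^-(\gamma)$, and $|R'''^-(\gamma)|\lesssim|R'^-(\gamma)|$; this gives $\dashint_{R'^-(\gamma)}|f|\leq C M^{\gamma-}f(x,t)$ and hence $M^{\gamma-}f_2(y,s)\leq C M^{\gamma-}f(x,t)$.

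For part $(ii)$, Corollary~\ref{cor:RHIlagged} applied to $w\in A_1^+(\gamma)\subset A_q^+(\gamma)$ (with $\alpha=\gamma$ and $\tau=(1+\gamma)/(1-\gamma)$ so that $S^+(\gamma)=R^+(\gamma)$) supplies an $\varepsilon>0$ with $(\dashint_{R^-(\gamma)}w^{1+\varepsilon})^{1/(1+\varepsilon)}\leq c\dashint_{R^+(\gamma)}w$ for every parabolic rectangle $R$. The key intermediate step is to combine this reverse H\"older estimate with the $A_1^+(\gamma)$ condition for $w$ to obtain $w^{1+\varepsilon}\in A_1^+(\gamma)$. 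Granting this, set $\delta=1/(1+\varepsilon)\in(0,1)$ and $f=w^{1+\varepsilon}\in L^1_\mathrm{loc}$: the Lebesgue differentiation theorem \cite[Lemma~2.3]{KinnunenMyyryYang2022} gives $M^{\gamma-}f\geq f$ almost everywhere, so $(M^{\gamma-}f)^\delta\geq f^\delta=w$; conversely, since $f\in A_1^+(\gamma)$, Proposition~\ref{thm:maximalA1-cond} yields $M^{\gamma-}f\leq Cf=Cw^{1+\varepsilon}$, so $(M^{\gamma-}f)^\delta\leq C^\delta w$. The bounded factor $b=w/(M^{\gamma-}f)^\delta$ then satisfies $b,\,1/b\in L^\infty$.

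The main obstacle in $(i)$ is geometric: the positive time lag $\gamma$ prevents a single rectangle from simultaneously covering a macroscopic backward extension of $R^-(\gamma)$ in its lower part and hitting a prescribed point of $R^+(\gamma)$ in its upper part. I plan to circumvent this by invoking Theorem~\ref{thm:timelagchange} to replace the $A_1^+(\gamma)$ condition by its equivalent form with a translated lower part $S^-(\alpha)$ and large $\tau\geq1$, providing the temporal separation needed for the dilation step. The main obstacle in $(ii)$ is the intermediate claim $w^{1+\varepsilon}\in A_1^+(\gamma)$: the naive chain gives $\dashint_{R^-(\gamma)}w^{1+\varepsilon}\lesssim(\dashint_{R^+(\gamma)}w)^{1+\varepsilon}$, but the $A_1^+(\gamma)$ condition requires $(\essinf_{R^+(\gamma)}w)^{1+\varepsilon}$ on the right, and in the parabolic setting $\dashint_{R^+(\gamma)}w$ and $\essinf_{R^+(\gamma)}w$ are not in general comparable for $A_1^+(\gamma)$ weights (e.g. $w(x,t)=e^{t}$ on rectangles of large side length). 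I expect to bridge this gap through a Calder\'on--Zygmund decomposition on the level sets of $w$ in $R^-(\gamma)$, controlling each level by the pointwise bound $\dashint_{R'^-(\gamma)}w\leq Cw(y,s)$ furnished by $A_1^+(\gamma)$ on suitably chosen rectangles $R'$, in the spirit of the proof of the reverse H\"older inequality presented in Section~\ref{sec:RHI}.
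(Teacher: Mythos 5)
Your high-level strategy for both parts is close to the paper's, and you correctly identify the two places where the argument requires a real idea, but your proposed ways to close those gaps diverge from the paper's and in each case you have left something unresolved that the paper handles concretely.

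\textbf{Part (i).} The paper also decomposes $f$ and uses the weak type $(1,1)$ bound via Cavalieri for the local piece, so the skeleton matches. However, your cutoff $f_1=f\chi_{R^-(\gamma)}$ does not work: to control $M^{\gamma-}f_2$ on $R^-(\gamma)$ you need every competing rectangle $R'$ with $R'^-(\gamma)\not\subset\operatorname{supp} f_1$ to have side length comparable to $l(R)$, and this fails for your choice, since a point $(y,s)$ near the boundary of $R^-(\gamma)$ admits arbitrarily small $R'$ with $R'^-(\gamma)$ sticking out of $R^-(\gamma)$. The paper instead cuts off on $P^-(\gamma)$ with $P=3R$ (aligned so that the top time coordinates of $P^-(\gamma)$ and $R^-(\gamma)$ coincide), which places the boundary of the support at distance of order $L$ from $R^-(\gamma)$ and yields the required lower bound $l(U)\gtrsim(1-\gamma)^{1/p}L$ on competing rectangles. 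Moreover, the dilation target in your sketch (``$(x,t)\in R'''^{+}(\gamma)$'' for a prescribed point) is not achievable with a uniform dilation constant; the paper instead aims for a translated upper part $S^+(\gamma)=R^-(\gamma)+(0,\tau(1+\gamma)L^p)$, shows $S^+(\gamma)\subset V^+(\gamma)$ with $l(V)=\sigma l(U)$ and $\sigma\lesssim(1-\gamma)^{-1/p}$, and only at the end invokes Theorem~\ref{thm:timelagchange} to convert the $S^+(\gamma)$-version into membership in $A_1^+(\gamma)$. You anticipate the need for Theorem~\ref{thm:timelagchange} but the geometric estimate that actually uses the translated set is left unspecified.

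\textbf{Part (ii).} You correctly name the crucial obstacle: a direct chain gives the average of $w$ over $R^+(\gamma)$ on the right, not the essential infimum, so the intermediate claim $w^{1+\varepsilon}\in A_1^+(\gamma)$ does not follow from Corollary~\ref{cor:RHIlagged} with your parameter choice $\tau=(1+\gamma)/(1-\gamma)$ (which pushes $S^+(\gamma)$ all the way to $R^+(\gamma)$). Your proposed fix via a fresh Calder\'on--Zygmund decomposition on level sets of $w$ is not attempted and is considerably heavier than the paper's route. The paper's resolution is shorter: choose the translation so that $S^+(\gamma)=R^-(\gamma)+(0,\gamma L^p/2)\subset R^-(\gamma/2)$, staying strictly in the past. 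This yields
\[
\biggl(\dashint_{R^-(\gamma)}w^{1+\varepsilon}\biggr)^{1/(1+\varepsilon)}\leq \frac{C_1}{1-\gamma}\dashint_{R^-(\gamma/2)}w,
\]
and since Theorem~\ref{thm:timelagchange} gives $w\in A_1^+(\gamma/2)$, taking the supremum over rectangles with $R^+(\gamma)\ni z$ (so a fortiori $R^+(\gamma/2)\ni z$) produces $\bigl(M^{\gamma-}(w^{1+\varepsilon})(z)\bigr)^{1/(1+\varepsilon)}\leq C\,M^{\gamma/2-}w(z)\leq C'w(z)$ almost everywhere. Combined with the Lebesgue differentiation bound $w\leq(M^{\gamma-}(w^{1+\varepsilon}))^{1/(1+\varepsilon)}$, which you do identify, this gives the two-sided pointwise comparison and hence the representation $w=b(M^{\gamma-}f)^\delta$ with $f=w^{1+\varepsilon}$, $\delta=1/(1+\varepsilon)$. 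So the missing idea is precisely the half-lag translation $R^-(\gamma)\to R^-(\gamma/2)$, which replaces the essential-infimum difficulty you flagged with the already-available $A_1^+(\gamma/2)$ maximal bound.
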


\begin{proof}

Let $R\subset\mathbb{R}^{n+1}$ be a parabolic rectangle with side length $L$. 
Consider a parabolic rectangle $P$ with $l(P)=3L$  
such that the spatial centers and
the top time coordinates of $P^-(\gamma)$ and $R^-(\gamma)$ coincide.
Note that $R^-(\gamma) \subset P^-(\gamma)$.
By choosing $\tau\geq1$ such that
\begin{align*}
\tau (1+\gamma) L^p - (1-\gamma)L^p = 2\gamma(3L)^p ,
\quad\text{that is}\quad
\tau = \frac{1+(2\cdot 3^{p}-1)\gamma}{1+\gamma} ,
\end{align*}
we have
$S^+(\gamma) = R^-(\gamma) + (0, \tau (1+\gamma) L^p) \subset P^+(\gamma)$.
We decompose 
$f=f_1+f_2$ where
\[
f_1(z) = f(z) \chi_{P^-(\gamma)}(z)
\quad\text{and}\quad
f_2(z) = f(z) \chi_{\mathbb{R}^{n+1} \setminus P^-(\gamma)}(z) .
\]
By the weak type $(1,1)$ estimate for the parabolic maximal function there exists a constant $C_1$ such that
\begin{align*}
\dashint_{R^-(\gamma)} (M^{\gamma-}f_1)^\delta
&= \frac{\delta}{\lvert R^-(\gamma) \rvert} \int_0^\infty \lambda^{\delta-1} \lvert R^-(\gamma) \cap \{ M^{\gamma-}f_1> \lambda\} \rvert \dla \\
&=
\frac{\delta}{\lvert R^-(\gamma) \rvert} \int_0^{\norm{f_1}_{L^1}/\lvert R^-(\gamma) \rvert} \lambda^{\delta-1} \lvert R^-(\gamma) \cap \{ M^{\gamma-}f_1> \lambda\} \rvert \dla \\
&\qquad+ \frac{\delta}{\lvert R^-(\gamma) \rvert} \int_{\norm{f_1}_{L^1}/\lvert R^-(\gamma) \rvert}^\infty \lambda^{\delta-1} \lvert R^-(\gamma) \cap \{ M^{\gamma-}f_1> \lambda\} \rvert \dla \\
&\leq 
\delta \int_0^{\norm{f_1}_{L^1}/\lvert R^-(\gamma) \rvert} \lambda^{\delta-1} \dla 
+ \frac{\delta}{\lvert R^-(\gamma) \rvert} \int_{\norm{f_1}_{L^1}/\lvert R^-(\gamma) \rvert}^\infty \lambda^{\delta-2} C_1 \norm{f_1}_{L^1} \dla \\
&=
\biggl(\frac{\norm{f_1}_{L^1}}{\lvert R^-(\gamma) \rvert}\biggr)^\delta
+ \frac{C_1 \delta}{\delta-1} \frac{\norm{f_1}_{L^1} }{\lvert R^-(\gamma) \rvert} 
\biggl(\frac{\norm{f_1}_{L^1}}{\lvert R^-(\gamma) \rvert}\biggr)^{\delta-1} \\
&= 
\biggl(1+ \frac{C_1 \delta}{\delta-1} \biggr)
\biggl( \frac{1}{\lvert R^-(\gamma) \rvert} \int_{P^-(\gamma)} \lvert f \rvert  \biggr)^\delta \\
&= 
\biggl(1+ \frac{C_1 \delta}{\delta-1} \biggr)
3^{\delta(n+p)}  
\biggl( \dashint_{P^-(\gamma)} \lvert f \rvert  \biggr)^\delta 
\leq 
C_2
(M^{\gamma-}f(z))^\delta
\end{align*}
for every $z\in S^+(\gamma) \subset P^+(\gamma)$, where 
$C_2 =  (1 + \frac{C_1 \delta}{1-\delta}) 3^{\delta(n+p)} $.
Thus, it holds that
\[
\dashint_{R^-(\gamma)} (M^{\gamma-}f_1)^\delta \leq
C_2
\essinf_{S^+(\gamma) \ni z} (M^{\gamma-}f(z))^\delta .
\]

Let $y\in R^-(\gamma)$ and $U$ be an arbitrary parabolic rectangle such that $y\in U^+(\gamma)$.
If $U^-(\gamma)\subset P^-(\gamma)$, then $ \lvert f_2 \vert_{U^-(\gamma)} = 0$.
Thus, we may assume that $U^-(\gamma)$ intersects the complement of $P^-(\gamma)$, which implies that
\[
l(U) > \min\Bigl\{ L, \frac{ (3^p-1)^\frac{1}{p} }{ 2^\frac{1}{p} } (1-\gamma)^\frac{1}{p} L  \Bigr\}
\geq  
(1-\gamma)^\frac{1}{p} L .
\]
We enlarge $U$ so that the upper part of the enlarged parabolic rectangle contains $S^+(\gamma)$.
We choose $\sigma\geq1$ in the following way.
Let $V$ be the smallest enlarged rectangle such that $l(V)=\sigma l(U)$, the spatial centers of $U$ and $V$ coincide, the top time coordinates of $V^-(\gamma)$ and $U^-(\gamma)$ coincide and $S^+(\gamma)\subset V^+(\gamma)$.
We compute an upper bound for $\sigma$ which tells how much $U$ needs to be enlarged for $S^+(\gamma)\subset V^+(\gamma)$ to hold.
A rough upper bound can be obtained from
\begin{align*}
\tau(1+\gamma)L^p
+ (1-\gamma) (3L)^p
\geq (1+\gamma)(1-\gamma) (\sigma L)^p ,
\end{align*}
that is
\begin{align*}
\sigma^p &\leq 
\frac{\tau(1+\gamma) + 3^p (1-\gamma) }{(1+\gamma)(1-\gamma)}
=
\frac{3^p +1 + (3^p -1)\gamma }{(1+\gamma)(1-\gamma)} 
\leq
\frac{3^p+1}{1-\gamma}
\leq
\frac{4^p}{1-\gamma} .
\end{align*}
Thus, we have
\[
\sigma \leq \frac{4}{(1-\gamma)^\frac{1}{p}} .
\]
It follows that
\begin{align*}
\dashint_{U^-(\gamma)} \lvert f_2 \rvert &\leq 
\frac{\lvert V^-(\gamma) \rvert }{\vert U^-(\gamma) \rvert} \dashint_{V^-(\gamma)} \lvert f_2 \rvert
=
\sigma^{n+p} \dashint_{V^-(\gamma)} \lvert f_2 \rvert\\
&\leq
\frac{4^{n+p}}{ (1-\gamma)^{\frac{n}{p}+1}} \dashint_{V^-(\gamma)} \lvert f_2 \rvert
\leq
C_3 M^{\gamma-}f(z)
\end{align*}
for every $z\in S^+(\gamma) \subset V^+(\gamma)$, where $C_3 = 4^{n+p}/(1-\gamma)^{\frac{n}{p}+1}$.
By taking supremum over all rectangles $U$ with $y\in U^+(\gamma)$, we obtain
$M^{\gamma-}f_2(y) \leq C_3 M^{\gamma-}f(z)$.
Since this holds for any $y\in R^-(\gamma)$ and $z\in S^+(\gamma)$, we have
\[
\dashint_{R^-(\gamma)} (M^{\gamma-}f_2)^\delta \leq
C_3 \essinf_{S^+(\gamma) \ni z} (M^{\gamma-}f(z))^\delta .
\]
By combining the estimates for $f_1$ and $f_2$, we get
\begin{align*}
\dashint_{R^-(\gamma)} (M^{\gamma-}f)^\delta &\leq 
\dashint_{R^-(\gamma)} (M^{\gamma-}f_1)^\delta + \dashint_{R^-(\gamma)} (M^{\gamma-}f_2)^\delta\\
&\leq
(C_2 + C_3)
\essinf_{S^+(\gamma) \ni z} (M^{\gamma-}f(z))^\delta .
\end{align*}
By Theorem~\ref{thm:timelagchange}, we conclude that $(M^{\gamma-}f)^\delta \in A_1^+(\gamma)$.

To prove the reverse direction,
let $w\in A_1^+(\gamma)$. 
Note that $R^-(\gamma) + (0,\gamma L^p/2) \subset R^-(\gamma/2)$.
Applying Corollary~\ref{cor:RHIlagged}, we obtain
\begin{align*}
\biggl( \dashint_{R^-(\gamma)} w^{1+\varepsilon} \biggr)^\frac{1}{1+\varepsilon}
&\leq C_1 \dashint_{R^-(\gamma)+(0,\gamma L^p/2)} w
\leq C_1 \frac{1-\gamma/2}{1-\gamma} \dashint_{R^-(\gamma/2)} w\\
&\leq \frac{C_1}{1-\gamma} \dashint_{R^-(\gamma/2)} w .
\end{align*}
By Theorem~\ref{thm:timelagchange}, we have $w \in A_1^+(\gamma/2)$.
Proposition~\ref{thm:maximalA1-cond} implies that there exists a constant $C$ such that
\begin{align*}
\bigl(M^{\gamma-}(w^{1+\varepsilon})(z) \bigr)^\frac{1}{1+\varepsilon} &=
\biggl( \sup_{R^+(\gamma) \ni z} \dashint_{R^-(\gamma)} w^{1+\varepsilon} \biggr)^\frac{1}{1+\varepsilon} \leq \frac{C_1}{1-\gamma} \sup_{R^+(\gamma) \ni z} \dashint_{R^-(\gamma/2)} w \\
&\leq \frac{C_1}{1-\gamma} \sup_{R^+(\gamma/2) \ni z} \dashint_{R^-(\gamma/2)} w
\leq
\frac{C_1}{1-\gamma} M^{\gamma/2-}w(z) 
\leq
C_2
w(z)
\end{align*}
for almost every $z\in \mathbb{R}^{n+1}$, 
where $C_2 = C_1 C / (1-\gamma)$.
Moreover, by the Lebesgue differentiation theorem~\cite[Lemma~2.3]{KinnunenMyyryYang2022}
and H\"older's inequality we have
\begin{align*}
w(z) \leq M^{\gamma-}w(z) \leq   \bigl(M^{\gamma-}(w^{1+\varepsilon})(z) \bigr)^\frac{1}{1+\varepsilon}
\end{align*}
for almost every $z\in\mathbb{R}^{n+1}$.
Hence, it follows that
\[
w(z) \leq \bigl(M^{\gamma-}f(z) \bigr)^\delta \leq C_2 w(z)
\]
for almost every $z\in\mathbb{R}^{n+1}$,
where $f=w^{1+\varepsilon}$ and $\delta=\frac{1}{1+\varepsilon}$.
Then $w=b (M^{\gamma-}f )^\delta $ almost everywhere with $b=w/(M^{\gamma-}f )^\delta$. Note that $1\leq \norm{b}_{L^\infty}\leq C_2$.
This completes the proof.
\end{proof}

Combining Theorem~\ref{thm:jones} and Theorem~\ref{thm:coifmanrochberg} we obtain the following characterization of $A_q^+$ weights.
Part $(i)$ also holds for $\gamma=0$ and $p=1$.

\begin{corollary}
Let $0<\gamma<1$.

\begin{enumerate}[(i)]
\item Assume that $f,g\in L^1_{\mathrm{loc}}(\mathbb{R}^{n+1})$ and $M^{\gamma-}f<\infty$ and $M^{\gamma+}g<\infty$ almost everywhere. Let $0<\delta<1$. Then $(M^{\gamma-}f)^\delta (M^{\gamma+}g)^{\delta(1-q)} $ is an $A_q^+(\gamma)$ weight with 
$[w]_{A_q^+(\gamma)}$
depending only on $n$ and $\delta$.

\item Assume that $w\in A_q^+(\gamma)$. Then there exist $f,g\in L^1_{\mathrm{loc}}(\mathbb{R}^{n+1})$, $0<\delta<1$ and $b$ with $b,\frac{1}{b}\in L^\infty$ such that $w=b (M^{\gamma-}f)^\delta (M^{\gamma+}g)^{\delta(1-q)} $ almost everywhere.

\end{enumerate}

\end{corollary}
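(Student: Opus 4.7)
The plan is to assemble the corollary directly from the two ingredients proved just above: the Jones factorization (Theorem~\ref{thm:jones}) and the Coifman--Rochberg characterization of parabolic $A_1^\pm(\gamma)$ weights (Theorem~\ref{thm:coifmanrochberg}).

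For part $(i)$, I would set $u = (M^{\gamma-}f)^\delta$ and $v = (M^{\gamma+}g)^\delta$. By Theorem~\ref{thm:coifmanrochberg}$(i)$ (and its time-reversed form applied to $g$), $u\in A_1^+(\gamma)$ and $v\in A_1^-(\gamma)$, each with the stated quantitative bound. Plugging $u$ and $v$ into the easy direction of Theorem~\ref{thm:jones} gives $uv^{1-q}\in A_q^+(\gamma)$, and unwinding the definitions yields exactly $uv^{1-q} = (M^{\gamma-}f)^\delta(M^{\gamma+}g)^{\delta(1-q)}$. The Muckenhoupt constant is controlled by $[u]_{A_1^+(\gamma)}[v]_{A_1^-(\gamma)}^{q-1}$ through the explicit computation in the first half of the proof of Theorem~\ref{thm:jones}, so the dependence passes to $n$ and $\delta$ as claimed.

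For part $(ii)$, Theorem~\ref{thm:jones} decomposes $w = uv^{1-q}$ with $u\in A_1^+(\gamma)$ and $v\in A_1^-(\gamma)$. Theorem~\ref{thm:coifmanrochberg}$(ii)$ (and its time-reversed analogue) then produces $u = b_1 (M^{\gamma-}f)^{\delta_1}$ and $v = b_2(M^{\gamma+}g)^{\delta_2}$ with $b_i,1/b_i\in L^\infty$. Multiplying and setting $b = b_1 b_2^{1-q}$, both $b$ and $1/b$ are essentially bounded, and one obtains $w = b(M^{\gamma-}f)^{\delta_1}(M^{\gamma+}g)^{\delta_2(1-q)}$.

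The only genuine obstacle is that the statement demands a \emph{single} exponent $\delta$ in both factors, whereas the two applications of Theorem~\ref{thm:coifmanrochberg}$(ii)$ a priori return two different $\delta_1,\delta_2$. Revisiting the proof of that theorem, the exponent has the form $\delta = 1/(1+\varepsilon)$ where $\varepsilon$ is any admissible reverse H\"older exponent delivered by Corollary~\ref{cor:RHIlagged}, and any positive $\varepsilon$ smaller than the maximal one is still admissible. Hence each of $u$ and $v$ can be represented using any $\delta$ in some interval $[\delta_\ast,1)$; choosing a common $\delta$ in the intersection of the two intervals, and taking the corresponding $f$ and $g$ from the proof of Theorem~\ref{thm:coifmanrochberg}$(ii)$, completes the argument.
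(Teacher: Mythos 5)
Your proposal is correct and follows essentially the same route as the paper: part $(i)$ combines Theorem~\ref{thm:coifmanrochberg}$(i)$ with the factorization direction of Theorem~\ref{thm:jones}, and part $(ii)$ first factorizes $w=uv^{1-q}$ via Theorem~\ref{thm:jones} and then represents $u$ and $v$ via Theorem~\ref{thm:coifmanrochberg}$(ii)$. Your explicit justification that a common $\delta$ can be chosen — by noting that the reverse H\"older exponent in the proof of Theorem~\ref{thm:coifmanrochberg}$(ii)$ may be decreased, so both factors admit representations for every $\delta$ in an interval close to $1$ — is a more careful account of the step the paper handles with the remark that ``it is possible to choose $\delta'=\delta$''.
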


\begin{proof}
By Theorem~\ref{thm:coifmanrochberg}, we have $(M^{\gamma-}f)^\delta \in A_1^+(\gamma)$ and $(M^{\gamma+}g)^\delta \in A_1^-(\gamma)$. Then Theorem~\ref{thm:jones} implies that $(M^{\gamma-}f)^\delta (M^{\gamma+}g)^{\delta(1-q)} \in A^+_q(\gamma)$.

For the reverse direction,
Theorem~\ref{thm:jones} implies that there exist $u\in A_1^+(\gamma)$ and $v\in A_1^-(\gamma)$ such that
$w=uv^{1-q}$. 
By Theorem~\ref{thm:coifmanrochberg}, there exist $f\in L^1_{\mathrm{loc}}(\mathbb{R}^{n+1})$, $0<\delta<1$ and $b_1$ with $b_1,\frac{1}{b_1}\in L^\infty$ such that 
$w=b_1 (M^{\gamma-}f)^\delta $.
Moreover, there exist $f\in L^1_{\mathrm{loc}}(\mathbb{R}^{n+1})$, $0<\delta'<1$ and $b_2$ with $b_2,\frac{1}{b_2}\in L^\infty$ such that 
$w=b_2 (M^{\gamma+}g)^{\delta'} $.
By the proof of Theorem~\ref{thm:jones}, we observe that it is possible to choose $\delta'=\delta$.
Therefore, we have $w=b (M^{\gamma-}f)^\delta (M^{\gamma+}g)^{\delta(1-q)} $ with $b = b_1 b_2^{1-q}$.
\end{proof}


\begin{thebibliography}{10}
  
\bibitem{AFMR1997}
H. Aimar, L. Forzani and F.J. Mart\'in-Reyes, \emph{On weighted inequalities for singular integrals}, Proc. Amer. Math. Soc. {\bf 125} (1997), NO. 7, 2057--2064.

\bibitem{berkovits2011}
L. Berkovits, \emph{Parabolic {M}uckenhoupt weights in the {E}uclidean space}, J. Math. Anal. Appl. \textbf{379} (2011), no.~2, 524--537.

\bibitem{bogelein2021a}
V. B\"{o}gelein, F. Duzaar and N. Liao,
\emph{On the {H}\"{o}lder regularity of signed solutions to a doubly nonlinear equation},
J. Funct. Anal. \textbf{281} (2021), no.~9, Paper No. 109173.

\bibitem{bogelein2021b}
V. B\"{o}gelein, F. Duzaar, J. Kinnunen and C. Scheven,
\emph{Higher integrability for doubly nonlinear parabolic systems},
J. Math. Pures Appl. (9) \textbf{143} (2021), 31--72.

\bibitem{bogelein2022}
V. B\"{o}gelein, F. Duzaar and C. Scheven,
\emph{Higher integrability for doubly nonlinear parabolic systems},
Partial Differ. Equ. Appl. \textbf{3} (2022), no. 6, Paper No. 74.
 
\bibitem{bogelein2021c}
V. B\"{o}gelein, A. Heran, L. Sch\"{a}tzler and T. Singer,
\emph{Harnack's inequality for doubly nonlinear equations of slow diffusion type},
Calc. Var. Partial Differential Equations \textbf{60} (2021), no. 6, Paper No. 215.

\bibitem{CUNO1995}
D. Cruz-Uribe, C.~J. Neugebauer and V. Olesen, \emph{The one-sided minimal operator and the one-sided reverse H\"older inequality}, Studia Math. {\bf 116} (1995), 255--270.

\bibitem{ForzaniMartinreyesOmbrosi2011}
L.~Forzani, F.~J. Mart\'{\i}n-Reyes and S.~Ombrosi, \emph{Weighted
  inequalities for the two-dimensional one-sided {H}ardy-{L}ittlewood maximal
  function}, Trans. Amer. Math. Soc. \textbf{363} (2011), no.~4, 1699--1719.

\bibitem{GV2006}
U. Gianazza and V. Vespri, 
\emph{A Harnack inequality for solutions of doubly nonlinear parabolic equations}, 
J. Appl. Funct. Anal. \textbf{1} (2006), no.~3, 271--284.

\bibitem{kinnunenkuusi}
J. Kinnunen and T. Kuusi, \emph{Local behaviour of solutions to doubly
  nonlinear parabolic equations}, Math. Ann. \textbf{337} (2007), no.~3,
  705--728.

\bibitem{KinnunenMyyryYang2022}
J.~Kinnunen, K.~Myyryl\"{a}inen and D.~Yang, \emph{John--{N}irenberg
  inequalities for parabolic {BMO}}, Math. Ann. \textbf{387} (2023), no.~3-4,
  1125--1162.

\bibitem{KinnunenMyyryYangZhu2022}
J.~Kinnunen, K.~Myyryl{\"a}inen, D.~Yang and C.~Zhu, \emph{Parabolic
  {M}uckenhoupt weights with time lag on spaces of homogeneous type with
  monotone geodesic property}, Potential Anal. (2023),
  \url{https://doi.org/10.1007/s11118-023-10098-1}.

\bibitem{kinnunenSaariMuckenhoupt}
J. Kinnunen and O. Saari, \emph{On weights satisfying parabolic {M}uckenhoupt conditions}, Nonlinear Anal. \textbf{131} (2016), 289--299.

\bibitem{kinnunenSaariParabolicWeighted}
\bysame, \emph{Parabolic weighted norm inequalities and partial differential
  equations}, Anal. PDE \textbf{9} (2016), no.~7, 1711--1736.

\bibitem{Korenovskii2007}
A.~A. Korenovskyy, \emph{Mean oscillations and equimeasurable rearrangements of
  functions}, Lecture Notes of the Unione Matematica Italiana, vol.~4,
  Springer, Berlin; UMI, Bologna, 2007.

\bibitem{LO2010}
A. Lerner and S. Ombrosi, \emph{A boundedness criterion for general maximal operators}, Publ. Mat. { \bf 54} (2010), 53--71.

\bibitem{MaHeYan2023}
J. Ma, Q. He and D. Yan, \emph{Weighted characterization of  parabolic fractional maximal operator}, Frontiers of Mathematics \textbf{18}
  (2023), no.~1, 185--196.

\bibitem{Martin1993}
F.J. Mart\'\i n-Reyes, \emph{New proofs of weighted inequalities for the one-sided Hardy-Littlewood maximal functions}, Proc. Amer. Math. Soc. {\bf 117} (1993), no. 3, 691--698.

\bibitem{MOST1990}
F.J. Mart\'\i n-Reyes, P. Ortega Salvador and A. de la Torre, \emph{Weighted inequalities for one-sided maximal functions}, Trans. Amer. Math. Soc. {\bf 319} (1990), no. 2, 517--534.

\bibitem{MRPT1993}
F.J. Mart\'\i n-Reyes, L. Pick and A. de la Torre,
\emph{ $A^+_\infty$ condition}, Can. J. Math. {\bf 45} (1993), 1231--1244.

\bibitem{MRT1992}
F.J. Mart\'\i n-Reyes and A. de la Torre, \emph{Two weight norm inequalities for one-sided fractional maximal operators}, Proc. Amer. Math. Soc. {\bf 117} (1992), no. 2, 483--489.

\bibitem{MRT1994}
\bysame, \emph{One-sided BMO spaces}, J. London Math. Soc. {\bf 49} (1994), 529--542.

\bibitem{Ombrosi2005}
S. Ombrosi, \emph{Weak weighted inequalities for a dyadic one-sided maximal function in $\mathbb R^n$}, Proc. Amer. Math. Soc. {\bf 133} (2005), no. 6, 1769--1775.

\bibitem{reshetnyak1994}
Yu.~G. Reshetnyak, \emph{Stability theorems in geometry and analysis},
  Mathematics and its Applications, vol. 304, Kluwer Academic Publishers Group,
  Dordrecht, 1994.

\bibitem{localtoglobal}
O. Saari, \emph{Parabolic {BMO} and global integrability of supersolutions to doubly nonlinear parabolic equations}, Rev. Mat. Iberoam. \textbf{32} (2016),
  no.~3, 1001--1018.

\bibitem{sawyer1986}
E.~Sawyer, \emph{Weighted inequalities for the one-sided {H}ardy-{L}ittlewood
  maximal functions}, Trans. Amer. Math. Soc. \textbf{297} (1986), no.~1,
  53--61.
  
  \bibitem{Trudinger1968}
N.S. Trudinger, \emph{Pointwise estimates and quasilinear parabolic equations}, Comm. Pure Appl. Math. {\bf 21} (1968), 205--226. 

\bibitem{V1992}
V. Vespri, \emph{On the local behaviour of solutions of a certain class of doubly nonlinear parabolic equations}, 
Manuscripta Math. {\bf 75} (1992), 65--80.


\end{thebibliography}
\end{document}